\numberwithin{equation}{section}
\newtheorem{theorem}{Theorem}[section]
\newtheorem{lemma}[theorem]{Lemma}
\theoremstyle{definition}
\newtheorem{definition}[theorem]{Definition}
\theoremstyle{remark}
\newtheorem{remark}[theorem]{Remark}
\def\Re{\operatorname{{Re}}}
\def\Im{\operatorname{{Im}}}
\DeclareMathOperator*{\esssup}{ess\,sup}
\DeclareMathOperator*{\supp}{supp}
\author[Faminskii]{Andrei V. Faminskii}
\address{Peoples' Friendship University of Russia (RUDN University), 6 Miklukho-Maklaya Street, Moscow, 117198, Russian Federation}
\thanks{The work was supported by the Ministry of Science and Higher Education of the Russian Federation: agreement no. 075-03-2020-223/3 (FSSF-2020-0018).}
\email{afaminskii@sci.pfu.edu.ru}
\title[Higher order nonlinear Schr\"{o}dinger equation]{The higher order nonlinear Schr\"{o}dinger equation with quadratic nonlinearity on the real axis}
\date{}
\begin{document}
\maketitle

\begin{abstract}
The initial value problem is considered for a higher order nonlinear Schr\"odinger equation with quadratic nonlinearity. Results on existence and uniqueness of weak solutions are obtained. In the case of an  effective at infinity additional damping large-time decay of solutions without any smallness assumptions is also established. The main difficulty of the study is the non-smooth character of the nonlinearity.
\end{abstract}

\section{Introduction. Notation. Description of main results}\label{S1}

The nonlinear Sch\"odinger equation (NLS) in one spatial dimension
$$
i u_t +a u_{xx} + \lambda |u|^p u =0,
$$
where $u=u(t,x)$ is a complex-valued function, $p>0$, $a, \lambda$ are real non-zero constants,
and the Korteweg--de~Vries equation (KdV)
$$
u_t+bu_x+u_{xxx}+\beta u u_x=0,
$$
where $u=u(t,x)$ is a real-valued function, $b, \beta$ are real constants, $\beta\ne 0$, the modified KdV equation
$$
u_t+bu_x+u_{xxx}+\beta u^2 u_x=0
$$
or more generally the $k$-generalized KdV equation (k-gKdV)
$$
u_t+bu_x+u_{xxx}+\beta u^k u_x=0
$$
are ones of the most famous examples of nonlinear dispersive equations (see, for example, \cite{LP}).

Each of these equations has its own physical meaning, but, moreover, certain common mathematical properties. Consider, for example, the initial value problem with the initial function $u_0(x)$.
Multiply the NLS equation by $2\bar u(t,x)$, take the imaginary part and integrate over $\mathbb R$, then since
$$
\int \Im 2i u_t \bar u \, dx = \int (\bar u u_t + \bar u_t u)\,dx = \frac{d}{dt} \int |u|^2\,dx,
$$
$$
\int \Im 2a u_{xx} \bar u\, dx = \frac{a}{i} \int (u_{xx} \bar u - \bar u_{xx} u)\, dx =0,
$$
$$
\Im 2\lambda |u|^pu \bar u =0,
$$
where here and further we drop the limits in integrals over the whole real line $\mathbb R$, it follows the conservation law
$$
\|u(t,\cdot)\|_{L_2(\mathbb R)} = \|u_0\|_{L_2(\mathbb R)}.
$$
But the same conservation law holds for the KdV equation and its generalizations and it can be obtained by the multiplication of the equation by $2u(t,x)$ and consequent integration over $\mathbb R$.

Moreover, for the NLS and k-gKdV equations the following quantities, which are usually referred to as the energy, are also preserved by the solution flow:
$$
\int \Bigl(|u_x|^2 -\frac{2\lambda}{a(p+2)} |u|^{p+2}\Bigr)\,dx,\quad 
\int \Bigl(u_x^2 -\frac{2}{(k+1)(k+2)} u^{k+2}\Bigr)\,dx.
$$

Next, consider the corresponding linear analogs of the NLS and the KdV equations:
$$
i u_t + a u_{xx} =0,\qquad u_t +bu_x +u_{xxx}=0.
$$
Then the images of the Fourier transform of the solutions to the initial value problem are written as follows:
$$
\widehat u(t,\xi) = \widehat u_0(\xi) e^{ia \xi^2 t},\quad \widehat u(t,\xi) = \widehat u_0(\xi) e^{i(\xi^3 -b\xi)t},
$$ 
that is in the form
$$
\widehat u(t,\xi) = \widehat u_0(\xi) e^{i P(\xi)t},
$$
where $P(\xi)$ is a real polynomial with $P''(\xi)\ne 0$, which shows the dispersive nature of these equations.

Therefore, one can combine these equations and consider the following one:
$$
i u_t + a u_{xx} + ib u_x +i u_{xxx} + \lambda |u|^p u + i\beta (|u|^p u)_x + i\gamma (|u|^p)_x u =0,
$$
where $u=u(t,x)$ is a complex-valued function, $a$, $b$, $\lambda$, $\beta$, $\gamma$ are real constants. This equation is called the higher order nonlinear  Sch\"odinger equation (HNLS)
and also has the physical meaning. It has applications in the propagation of femtosecond optical
pulses in a monomode optical fiber, accounting for additional effects such as third order dispersion, self-steeping of the pulse, and self-frequency shift (see \cite{Fib, HK, Kod, KC}, and the references therein).

Similarly to the NLS and k-gKdV equations it can be shown that the norm in $L_2(\mathbb R)$ of a solution to the initial value problem is also preserved. On the contrary, if one tries to derive the analogue of the conservation law for the energy, the following identity is obtained:
\begin{multline*}
\frac{d}{dt} \int \Bigl[ |u_x|^2  + \frac{i}{\beta+\gamma}\Bigl(\lambda - \frac{a(3\beta+2\gamma)}{3}\Bigr) u \bar u_x 
-\frac{2(3\beta +2\gamma)}{3(p+2)} |u|^{p+2}\Bigr]\, dx \\ -
\frac{\gamma}{3} \int \bigl(|u|^p\bigr)_x \bigl(|u|^2\bigr)_{xx} \,dx =0.
\end{multline*}
Therefore, the integral of the expression in the square brackets is preserved only if either $\gamma=0$ or $p=2$,
since
$$
\int_{\mathbb R} \bigl(|u|^2\bigr)_x \bigl(|u|^2\bigr)_{xx} \,dx =0.
$$

Another serious obstacle in the study of such an equation is the non-smoothness of the function $|u|^p$ except the special cases of even natural values of $p$. Just in the case $p=2$, that is for the equation
$$
i u_t + a u_{xx} + ib u_x +i u_{xxx} + \lambda |u|^2 u + i\beta \bigl(|u|^2 u\bigr)_x + i\gamma (|u|^2)_x u =0,
$$
were previously obtained results on local and global well-posedness of the initial value problem. In particular, in \cite{Laurey} local well-posedness was proved for the initial data $u_0$ in $H^s(\mathbb R)$, $s>3/4$, and global well-posedness in $H^s(\mathbb R)$, $s\geq 1$, if $\beta+\gamma\ne 0$. In \cite{Staf} the local result was improved up to $s\geq 1/4$. In \cite{CL} the same result was obtained for the analogous equation with variable (depending on $t$) coefficients. Under certain relation between the coefficients the local result was extended to the global one in \cite{Car} if $s>1/4$. In \cite {CN} global well-posedness was proved in certain weighted subspaces of $H^2(\mathbb R)$, where power weights were effective at $\pm\infty$ (looking ahead, in the present article the weights are effective only at $+\infty$); the argument used global estimates from \cite{Laurey}. A unique continuation property was obtained in \cite{CP}. 

No results for the initial value problem for the HNLS equation with other values of $p$ have been previously established.

In \cite{CCFSV} an initial-boundary value problem on a bounded interval $I$ for the HNLS equation was considered. In the case $p\in [1,2]$ and the initial function $u_0 \in H^s(I)$, $0\leq s \leq 3$, results on global existence and uniqueness of mild solutions were obtained. For $u_0\in L_2(I)$ the result on global existence was extended either to $p\in (2,3)$ or $p\in (2,4)$, $\gamma=0$. Moreover, after addition to the equation of the damping term $id(x)u$, where the non-negative function $d$ was strictly positive on a certain sub-interval, large-time decay of solutions was established.  Certain preceding results for the truncated version of the HNLS equation ($\beta=\gamma=0$) can be found in \cite{ASV, BOY, BBV, BV,  Chen, CCPV}.

Note that the value $p=2$ in the nonlinearity in the HNLS equation corresponds in the KdV case to the modified KdV equation, while to the KdV equation itself corresponds the value $p=1$, and this is object of the study of the present article. We consider the initial value problem for an equation
\begin{equation}\label{1.1}
i u_t + a u_{xx} + ib u_x +i u_{xxx} + \lambda |u| u + i\beta (|u| u)_x + id(x)u =0
\end{equation}
with the initial data
\begin{equation}\label{1.2}
u(0,x) = u_0(x),\quad x\in\mathbb R.
\end{equation}
Here $u=u(t,x)$ and $u_0$ are complex-valued function, $a$, $b$, $\lambda$, $\beta$ are real constants. In comparison with the general form of the HNLS equation it is assumed that $\gamma=0$, that is the nonlinearity has the divergent form and, in particular, the conservation law for the analog of the energy holds. The main results consist of theorems on global existence and uniqueness of weak solutions. Moreover, the presence of the damping term $id(x)u$, where the non-negative function $d$ is strictly positive at infinity, gives an opportunity to establish large-time decay of solutions. Similar assumptions, which mean that the damping is effective only at infinity, were previously used in \cite{CDFN} for the case of the KdV equation itself. For existence and uniqueness results the presence of the damping term is irrelevant.

The initial function $u_0$ is assumed to be from the following weighted at $+\infty$ $L_2$ space: $(1+x_+)^{3/4} u_0 \in L_2(\mathbb R)$ (here and further $x_+ = \max(x,0)$). Previously the corresponding theory was developed for the KdV equation itself in \cite{KF}. The result on global well-posedness of the initial value problem obtained there was not optimal with respect to the properties of the initial function. Later, for example, in the paper \cite{B} global well-posedness was established for $u_0\in L_2(\mathbb R)$ without any additional weights and even for $u_0\in H^s(\mathbb R)$ for negative values of $s$ (see, for example, \cite{Guo}). However, the methods of that papers required the use of the Bourgain spaces, where a very sophisticated harmonic analysis was applied, in particular, closely related to the concrete nonlinearity $(u^2)_x$. Possibility to apply the technique of the Bourgain spaces to the nonlinearity $(|u|u)_x$ is not evident and is an open problem. On the contrary, the class of well-posedness in \cite{KF} was very simple, did not require any smoothness and consisted of functions $u(t,x)$ such that $(1+x_+)^{3/4} \in L_\infty(0,T;L_2(\mathbb R))$. It turned out that the ideas of the paper \cite{KF} can be applied to equation \eqref{1.1}. Although the methods of the last paper look like rough and archaic in comparison with \cite{B}, they are effective for the considered problem. However, a lot of preliminary work had to be done.

Let $\Pi_T = (0,T)\times \mathbb R$. Let $L_p = L_p(\mathbb R)$, $H^s = H^s(\mathbb R)$, $C_b = C_b(\mathbb R)$ (the subscript $b$ means bounded functions), $C_b^k = C_b^k(\mathbb R)$, $\EuScript S = \EuScript S(\mathbb R)$, $\EuScript S' = \EuScript S'(\mathbb R)$. The notation $C_w([0,T];\dots)$ means a weakly continuous map.

Let $\psi(x) \not\equiv \text{const}$ be a non-negative continuous on $\mathbb R$ function. Define a special weighted space:
$$
L_p^{\psi(x)} = L_p^{\psi(x)}(\mathbb R) = \{\varphi(x): \varphi \psi^{1/2} \in L_p\},
$$
endowed with the natural norm.
Let $L_p^0 = L_p$ and for $\alpha \ne 0$ 
$$
L_p^\alpha = L_p^\alpha (\mathbb R) = L_p^{(1+x_+)^{2\alpha}} = \{\varphi(x): (1+x_+)^{\alpha} \varphi(x) \in L_p\}.
$$

The notion of a weak solution of the considered problem is understood in the following sense.

\begin{definition}\label{D1.1}
Let $T>0$, $u_0\in L_2$, $d\in L_2$. A function $u\in L_\infty(0,T;L_2)$ is called a weak solution to problem \eqref{1.1}, \eqref{1.2} if for any function $\phi \in C^1([0,T];L_2) \cap C([0,T]; H^3)$, $\phi\big|_{t=T} =0$, the following equality holds:
\begin{multline}\label{1.3}
\iint_{\Pi_T} \bigl( i u \phi_t - a u \phi_{xx} + ib u \phi_x  + i u \phi_{xxx} -\lambda |u| u \phi  + i\beta |u| u \phi_x -id(x) u\phi\bigr)\,dxdt  \\+
\int u_0 \phi\big|_{t=0}\,dx =0.
\end{multline}
\end{definition}

Note that under the hypothesis of Definition~\ref{D1.1} $|u|u\in L_\infty(0,T;L_1)$, $\phi\in C([0,T];C^2_b)$ and so the integral in the left-hand side of \eqref{1.3} exists.

\begin{definition}\label{D1.2}
We say that the function $d(x)$ satisfies Condition~A, if it is non-negative on $\mathbb R$ and
there exist positive constants $d_0$ and $R_0$ such that
\begin{equation}\label{1.4}
d(x)\geq d_0\qquad \text{for}\quad |x|\geq R_0.
\end{equation}
\end{definition}

The main result of the paper is the following theorem.

\begin{theorem}\label{T1.1}
Let $u_0\in L_2^{3/4}$, $d\in L_\infty$. Then for any $T>0$ in the strip $\Pi_T$ there exists a unique weak solution $u\in C_w([0,T];L_2^{3/4})$ to problem \eqref{1.1}, \eqref{1.2}. If in addition the function $d(x)$ satisfies Condition~A, then there exists a positive constant $\gamma$ and for any $M>0$ a positive constant $c(M)$ such that if $\|u_0\|_{L_2} \leq M$
\begin{equation}\label{1.5}
\|u(t,\cdot)\|_{L_2}^2 \leq c(M) e^{-\gamma t}\quad \forall\ t\geq 0.
\end{equation}
\end{theorem}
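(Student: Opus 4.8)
The plan is to obtain the solution as a limit of solutions of a parabolic regularization of \eqref{1.1}, and then to prove uniqueness and decay directly for any weak solution of the class $C_w([0,T];L_2^{3/4})$. First I would regularize \eqref{1.1}: add the dissipative term $i\varepsilon u_{xxxx}$ (whose linear symbol produces the smoothing factor $e^{-\varepsilon\xi^4 t}$, dominating the third-order dispersion), mollify $u_0$, and replace the non-smooth nonlinearity $|u|u$ by a smooth divergence-form approximation, e.g. $\Phi_\varepsilon(u)=(|u|^2+\varepsilon^2)^{1/2}u$. For each fixed $\varepsilon>0$ the regularized problem is solved locally in time by a contraction in $C([0,\tau];H^s)$ with $s$ large, using the parabolic smoothing and the bound $|\Phi_\varepsilon(u)-\Phi_\varepsilon(v)|\lesssim(|u|+|v|)|u-v|$. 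Global existence and, crucially, bounds uniform in $\varepsilon$ come from two a priori identities. Multiplying the regularized equation by $2\bar u$, taking the imaginary part, and using that all dispersive and nonlinear terms are in divergence form gives $\frac{d}{dt}\|u\|_{L_2}^2+2\varepsilon\int|u_{xx}|^2\,dx+2\int d(x)|u|^2\,dx=0$, hence $\|u(t)\|_{L_2}\le\|u_0\|_{L_2}$ and $\varepsilon\iint_{\Pi_T}|u_{xx}|^2\,dxdt\le C$. Multiplying instead by $2\rho(x)\bar u$ with $\rho(x)\asymp(1+x_+)^{3/2}$ and again taking the imaginary part, the integration by parts of $iu_{xxx}$ produces $\tfrac92\int_{x>0}(1+x)^{1/2}|u_x|^2\,dx$ on the dissipative side, so a Gronwall argument yields the $\varepsilon$-uniform bounds $\sup_{[0,T]}\int(1+x_+)^{3/2}|u(t)|^2\,dx\le C$ (here $u_0\in L_2^{3/4}$ enters) and the local smoothing bound $\iint_{\Pi_T}(1+x_+)^{1/2}|u_x|^2\,dxdt\le C$.

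\textbf{Passage to the limit.} From the local smoothing bound $u_\varepsilon$ is bounded in $L_2(0,T;H^1_{\mathrm{loc}})$, and the equation gives $\partial_t u_\varepsilon$ bounded in $L_2(0,T;H^{-3}_{\mathrm{loc}})$ (the term $\varepsilon u_{\varepsilon,xxxx}=\sqrt\varepsilon\,\partial_x^2(\sqrt\varepsilon u_{\varepsilon,xx})$ even tends to $0$); by Aubin--Lions a subsequence converges strongly on every bounded subset of $\Pi_T$ and a.e., which is exactly what is needed to pass to the limit in $\lambda|u_\varepsilon|u_\varepsilon\to\lambda|u|u$ in $L_1$ on bounded sets and, since this term enters \eqref{1.3} tested against $\phi_x$, also in the self-steepening term. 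Weak-$*$ convergence in $L_\infty(0,T;L_2^{3/4})$ provides the weak solution, and weak continuity in time, i.e. $u\in C_w([0,T];L_2^{3/4})$, follows from the uniform bound together with continuity of $t\mapsto\int u(t)\phi\,dx$ for $\phi$ in a dense set, read off from the equation. The hard part here is the roughest term $(|u|u)_x$ in the weighted estimate: after integration by parts it is dominated by $\int(1+x_+)^{3/2}|u|^2|u_x|\,dx$ and similar pieces, which one controls by splitting the weight between the local-smoothing factor $(1+x_+)^{1/4}u_x$ and the weighted-$L_2$ factor $(1+x_+)^{3/4}u$, interpolating by a Gagliardo--Nirenberg inequality; the non-smoothness of $|u|$ forces this to be done first for $\Phi_\varepsilon$ and only then passed to the limit.

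\textbf{Uniqueness.} Let $u_1,u_2\in C_w([0,T];L_2^{3/4})$ solve the problem with the same data and set $w=u_1-u_2$. I would first show, by mollifying the equation itself in $x$ and testing with the multiplier $\rho$ above (handling the resulting commutators), that any weak solution of this class already satisfies $\iint_{\Pi_T}(1+x_+)^{1/2}|u_{j,x}|^2\,dxdt<\infty$; the same mollification justifies an energy identity for $w$. In the weighted $L_2$ identity for $w$ the dispersive terms again contribute the good term $\asymp\int(1+x_+)^{1/2}|w_x|^2\,dx$, while the nonlinear difference is estimated through $|\,|u_1|u_1-|u_2|u_2\,|\lesssim(|u_1|+|u_2|)|w|$, the derivative term being controlled, after integration by parts, by the local-smoothing norm of $w_x$ times a weighted norm of $(|u_1|+|u_2|)|w|$, itself bounded via $\sup_t\|(1+x_+)^{3/4}u_j(t)\|_{L_2}$ and a Gagliardo--Nirenberg factor. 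A Gronwall argument forces $w\equiv0$. The delicate point, as in the a priori estimates, is the bookkeeping matching the weight $(1+x_+)^{1/2}$ available from smoothing against the weight $(1+x_+)^{3/2}$ in the norm.

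\textbf{Large-time decay.} Assume Condition~A. The $L_2$ identity for the genuine solution (justified by mollification) reads $\frac{d}{dt}\|u(t)\|_{L_2}^2=-2\int d(x)|u(t)|^2\,dx\le0$, so $E(t):=\|u(t)\|_{L_2}^2$ is nonincreasing and $\int_0^\infty\!\!\int d(x)|u|^2\,dxdt=\tfrac12 E(0)<\infty$. Since the damping is effective only for $|x|\ge R_0$, to pass to the full norm I would, following the strategy used for KdV in \cite{CDFN}, combine this with a multiplier estimate for a bounded increasing function $\theta$ with $\theta'>0$ on a neighbourhood of $\{|x|\le R_0\}$: multiplying by $2\theta(x)\bar u$ and taking the imaginary part again produces the good term $3\int\theta'|u_x|^2\,dx$, so that over a window $(t,t+T_*)$, using $E$ nonincreasing, $\int_t^{t+T_*}\!\!\int_{|x|<R_0+1}|u_x|^2\,dxds\lesssim E(t)$; a Poincar\'e inequality on the bounded region together with the strong damping near $|x|=R_0$ then yields an observability inequality $E(t)\lesssim\int_t^{t+T_*}\!\!\int d(x)|u|^2\,dxds+(\text{terms small once }E(t)\text{ is small})$. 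From $\int_0^\infty\!\!\int d|u|^2<\infty$ and $E$ nonincreasing one first obtains $E(t)\to0$; once $E(t)$ falls below a universal threshold the nonlinear correction is absorbed, giving $E(t+T_*)\le\kappa E(t)$ with a universal $\kappa\in(0,1)$, hence $E(t)\le Ce^{-\gamma t}$ for $t\ge t_0(M)$ with $\gamma=-T_*^{-1}\ln\kappa$ independent of $M$; bounding $E$ by $M^2$ on $[0,t_0(M)]$ and enlarging the constant yields \eqref{1.5}. The main obstacle is the observability inequality without any smallness assumption on $u_0$: I expect it to require a compactness--uniqueness (contradiction) argument, in which the local smoothing supplies the compactness needed to extract a nontrivial limiting solution and a unique continuation property for \eqref{1.1} forces that limit to vanish — the step with no analogue in the linear theory.
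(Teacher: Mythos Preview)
Your overall architecture is sound and the decay argument is essentially the paper's: an observability inequality $\int_0^T\!\!\int_{-R_0}^{R_0}|u|^2\,dxdt\le c_0\iint d|u|^2\,dxdt$ proved by contradiction/compactness plus a unique continuation property (the paper invokes \cite{Sh}), combined with the $L_2$ dissipation identity and monotonicity of $\|u(t)\|_{L_2}$, gives \eqref{1.5}. Your intermediate paragraph trying to get $E(t)\to0$ \emph{before} observability is not quite right --- $\int_0^\infty\!\!\int d|u|^2<\infty$ alone does not force the full $L_2$ norm to zero --- but you correctly identify that the real work is the compactness--uniqueness step, and once that inequality holds for every $M$ the iteration is immediate.

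The two places where your route diverges from the paper are existence and, more substantially, uniqueness. For existence the paper does \emph{not} use parabolic regularization: it first regularizes only the nonlinearity ($g_\delta(|u|^2)u=(|u|^2+\delta)^{1/2}u$), solves by contraction in a dispersive space $X(\Pi_T)$ built from the Kato smoothing and Strichartz estimates of the third--order linear flow, then passes $\delta\to0$ and finally mollifies $u_0,d$. Your parabolic scheme should also work, but the uniform-in-$\varepsilon$ weighted estimate and the local smoothing then come from the $iu_{xxx}$ term anyway, so the added $i\varepsilon u_{xxxx}$ is largely redundant.

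For uniqueness the paper takes a genuinely different path that avoids the step you flag as delicate. Rather than justify a weighted energy identity for the difference $w$ (which, as you note, requires first proving $(1+x_+)^{1/4}u_{j,x}\in L_2(\Pi_T)$ for an \emph{arbitrary} weak solution --- a regularity statement the paper never establishes and which your mollification-commutator sketch does not obviously close), the paper splits $w=w_1+w_2$: $w_1$ absorbs the initial datum and the linear damping term and is estimated by the standard weighted energy inequality for the linear equation; $w_2$ carries the nonlinear difference $f_2\sim(|u|+|\widetilde u|)|w|$ and is estimated through the explicit representation $w_2(t,x)=\int_0^t\!\!\int G_x(t-\tau,x-y)f_2(\tau,y)\,dyd\tau$ and a pointwise kernel bound $\|\partial_x^nG(t,\cdot-y)\rho^{1/2}\|_{L_2}\le c\,t^{-(5n+4)/12}\rho^{1/2}(y)(1+y_+)^{(2n+1)/4}$ (Lemma~\ref{LA.3}, derived from an Airy-type analysis in Appendix~\ref{A}). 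With $\rho=\rho_{3/4,\epsilon}$ this gives
\[
\|w_2(t)\|_{L_2^{\rho}}\le c\Bigl(\int_0^t\Bigl(\int(|u|+|\widetilde u|)|w|\rho^{1/2}(1+x_+)^{3/4}\,dx\Bigr)^p d\tau\Bigr)^{1/p},
\]
and a single Cauchy--Schwarz in $x$ closes because the weight $3/4$ is exactly matched: $(1+x_+)^{3/4}\cdot(1+x_+)^{3/4}=(1+x_+)^{3/2}=\rho$. No derivative of $u_j$ is ever needed. So what your approach buys is staying entirely within energy methods; what the paper's approach buys is a uniqueness proof valid in $L_\infty(0,T;L_2^{3/4})$ with no auxiliary regularity, at the price of the kernel estimates in Appendix~\ref{A}.
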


Introduce certain auxiliary notation. For $\alpha\geq 0$ let
$$
H^{1,\alpha} = \{\varphi(x): \varphi, \varphi' \in L_2^\alpha\}
$$
endowed with the natural norm.

Let $\eta(x)$ denotes a cut-off function, namely, $\eta$ is an infinitely smooth non-decreasing function on $\mathbb R$ such that $\eta(x)=0$ for $x\leq 0$, $\eta(x)=1$ for $x\geq 1$, $\eta(x)+\eta(1-x) \equiv 1$.

Define the following weight function $\rho_{\alpha,\epsilon}(x)$ for $\alpha\geq 0$, $\epsilon>0$: let $\rho_{\alpha,\epsilon} \in C^\infty(\mathbb R)$ be an increasing function such that $\rho_{\alpha,\epsilon}(x) = e^{2\epsilon x}$ for $x\leq -1$, $\rho_{\alpha,\epsilon}(x) = (1+x)^{2\alpha}$ if $\alpha>0$ and $\rho_{0,\epsilon} = 2 - \ln^{-1}(x+e)$ for $x\geq 0$, $\rho'_{\alpha,\epsilon}(x) >0$ for $x\in (-1,0)$. Note that if either $\rho(x) \equiv \rho_{\alpha,\epsilon}(x)$ or $\rho(x) \equiv\rho'_{\alpha,\epsilon}(x)$ then for any natural $j$
\begin{equation}\label{1.6}
|\rho^{(j)}(x)| \leq c(j,\alpha,\epsilon)\rho(x)\quad \forall x\in\mathbb R.
\end{equation}

\begin{definition}\label{D1.3}
For any $T>0$ define a class $X(\Pi_T)$ of functions $u(t,x)$ such that
\begin{align*}
u\in C([0,T]; H^1),\quad& u_x \in L_6(0,T; C_b),\\
u_{xx} \in C_b(\mathbb R; L_2(0,T)),\quad& u\in L_2(\mathbb R; C[0,T]).
\end{align*}
\end{definition} 

For $\delta \geq 0$ and $\theta\geq 0$ define a function
\begin{equation}\label{1.7}
g_\delta(\theta) \equiv (\theta +\delta)^{1/2}.
\end{equation}

The paper is organized as follows.  In Section~\ref{S2} an auxiliary linear problem is considered. In Section~\ref{S3} we present the results on existence of solutions to the original problem, and in Section~\ref{S4} --- on uniqueness and continuous dependence on initial data. Section~\ref{S5} contains certain results on internal regularity of weak solutions, which, in particular, are used in Section~\ref{S6} devoted to large-time decay of solutions. Certain technical items related to the properties of the fundamental solution of the corresponding linear operator and differentiation of $|u|$ are presented in Appendixes~\ref{A} and~\ref{B}.

\section{Auxiliary linear problem}\label{S2}

Consider a linear problem
\begin{gather}\label{2.1}
i u_t +a u_{xx} +i b u_x+i u_{xxx}  = f(t,x),\\ 
\label{2.2}
u(0,x) = u_0(x),\quad x\in \mathbb R.
\end{gather}

\begin{lemma}\label{L2.1}
Let $u_0 \in \EuScript S$, $f\in C^\infty([0,T];\EuScript S)$, then there exists a unique solution to problem \eqref{2.1}, \eqref{2.2} $u\in C^\infty([0,T];\EuScript S)$.
\end{lemma}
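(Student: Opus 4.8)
The natural approach is to solve the problem explicitly via the Fourier transform in the spatial variable. Applying $\widehat{\ \cdot\ }$ to \eqref{2.1} turns the linear operator $i\partial_t + a\partial_x^2 + ib\partial_x + i\partial_x^3$ into $i\partial_t - a\xi^2 + ib(i\xi) + i(i\xi)^3 = i\partial_t - a\xi^2 - b\xi + i\xi^3 \cdot(-i)$; more carefully, since $\widehat{u_{xxx}}=(i\xi)^3\widehat u = -i\xi^3\widehat u$ and $\widehat{u_{xx}}=-\xi^2\widehat u$, $\widehat{u_x}=i\xi\widehat u$, equation \eqref{2.1} becomes the ODE in $t$
\begin{equation*}
i\,\widehat u_t(t,\xi) + \bigl(-a\xi^2 - b\xi + \xi^3\bigr)\widehat u(t,\xi) = \widehat f(t,\xi),
\end{equation*}
that is $\widehat u_t = i\,p(\xi)\widehat u - i\widehat f$ with the real polynomial $p(\xi) = -a\xi^2 - b\xi + \xi^3$. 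For each fixed $\xi$ this is a linear scalar ODE, so by Duhamel's formula
\begin{equation*}
\widehat u(t,\xi) = e^{i p(\xi) t}\,\widehat u_0(\xi) - i\int_0^t e^{i p(\xi)(t-\tau)}\,\widehat f(\tau,\xi)\,d\tau,
\end{equation*}
and $u$ is then recovered by the inverse Fourier transform. This gives existence; uniqueness is immediate because the difference $w$ of two solutions satisfies the same ODE with zero data for every $\xi$, hence $\widehat w \equiv 0$.

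The real content is to verify the regularity claim $u\in C^\infty([0,T];\EuScript S)$. Since $|e^{ip(\xi)t}|=1$, multiplication by the propagator preserves $\EuScript S$ in $\xi$: for any multi-indices one estimates $\partial_\xi^k(e^{ip(\xi)t}\widehat u_0(\xi))$ by Leibniz, and each $\xi$-derivative falling on $e^{ip(\xi)t}$ produces a factor that is polynomial in $\xi$ and in $t\in[0,T]$, hence bounded by a polynomial in $\xi$ times a constant depending on $T$; multiplied against the rapidly decaying $\partial_\xi^j\widehat u_0$ this stays rapidly decreasing, uniformly for $t\in[0,T]$. The same bookkeeping applied to the Duhamel term uses that $f\in C^\infty([0,T];\EuScript S)$ so that $\widehat f(\tau,\cdot)$ and its $\xi$-derivatives are rapidly decreasing uniformly in $\tau\in[0,T]$, and the $\tau$-integral over a finite interval preserves this. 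Time-differentiability is handled by differentiating under the integral sign: $\partial_t$ of the first term brings down $ip(\xi)$, $\partial_t$ of the Duhamel term brings down $ip(\xi)$ inside the integral plus the boundary contribution $-i\widehat f(t,\xi)$; iterating, every $t$-derivative of $\widehat u$ is again of the form ``polynomial in $\xi$ times Schwartz-in-$\xi$,'' continuous in $t$, so $\partial_t^m u \in C([0,T];\EuScript S)$ for all $m$. Finally, that $u$ actually solves \eqref{2.1}, \eqref{2.2} follows by transforming back: $\widehat u(0,\xi)=\widehat u_0(\xi)$ and the ODE identity is satisfied by construction.

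The only mild obstacle is organizing the uniform-in-$t$ Schwartz estimates cleanly — one wants, for each pair $(j,k)$, a bound of the form $\sup_{t\in[0,T]}\sup_\xi |\xi^j \partial_\xi^k \widehat u(t,\xi)| < \infty$, and the cross terms from Leibniz where several derivatives hit $e^{ip(\xi)t}$ must be controlled; but since each such derivative only costs extra powers of $\xi$ and of $t$ (both harmless: $\xi$ against rapid decay of the data, $t$ against the compact interval $[0,T]$), this is routine. No genuine difficulty arises because the symbol has modulus one; the polynomial growth of its $\xi$-derivatives is exactly what the Schwartz class is designed to absorb.
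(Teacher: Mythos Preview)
Your approach is correct and coincides with the paper's: both construct the solution by Fourier transform, arriving at exactly the Duhamel formula $\widehat u(t,\xi) = e^{i(\xi^3-a\xi^2-b\xi)t}\widehat u_0(\xi) - i\int_0^t e^{i(\xi^3-a\xi^2-b\xi)(t-\tau)}\widehat f(\tau,\xi)\,d\tau$. The paper states this formula and moves on, while you spell out the routine verification that the unimodular multiplier with polynomially growing $\xi$-derivatives preserves $\EuScript S$ uniformly on $[0,T]$; this extra detail is harmless and accurate.
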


\begin{proof}
The unique solution to the considered problem in the considered space is constructed via the Fourier transform and is given by a formula
\begin{equation}\label{2.3}
\widehat u(t,\xi) = \widehat u_0(\xi) e^{i(\xi^3 -a \xi^2 -b\xi)t} -i
\int_0^t \widehat f(\tau,\xi) e^{i(\xi^3 -a \xi^2 -b\xi)(t-\tau)}\,d\tau,
\end{equation}
where
$$
\widehat u(t,\xi) \equiv \int e^{-i x\xi} u(t,x)\,dx
$$
with similar notation for $\widehat u_0$ and $\widehat f$.
\end{proof}

\begin{definition}\label{D2.1}
Let $u_0\in \EuScript S'$, $f\equiv f_1 + f_{2x}$, where $f_1,f_2 \in \bigl(C^\infty([0,T];S)\bigr)'$. A function $u\in \bigl(C^\infty([0,T];S)\bigr)'$ is called a generalized solution to problem \eqref{2.1}, \eqref{2.2} in a strip $\Pi_T$ if for any function $\phi \in C^\infty([0,T];S)$, $\phi(T,x) \equiv 0$,  it verifies an equality
\begin{equation}\label{2.4}
\langle u, i\phi_t - a\phi_{xx} + i b \phi_x+i \phi_{xxx} \rangle +\langle f_1,\phi\rangle - \langle f_2, \phi_x\rangle+ \langle u_0, \phi\big|_{t=0}\rangle  =0. 
\end{equation}
\end{definition}

\begin{remark}\label{R2.1}
Any weak solution to problem \eqref{2.1}, \eqref{2.2} in the sense of Definition \ref{D1.1} (with corresponding changes) is, of course, a generalized solution. 
\end{remark}

\begin{lemma}\label{L2.2}
The generalized solution to problem \eqref{2.1}, \eqref{2.2} is unique.
\end{lemma}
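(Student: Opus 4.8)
The plan is to run the standard duality argument. By linearity of problem \eqref{2.1}, \eqref{2.2} and of the defining relation \eqref{2.4}, it suffices to prove that a generalized solution $u\in\bigl(C^\infty([0,T];S)\bigr)'$ corresponding to $u_0=0$, $f_1=f_2=0$ must vanish, that is $\langle u,\psi\rangle=0$ for every $\psi\in C^\infty([0,T];S)$. For such data \eqref{2.4} reads $\langle u, L^*\phi\rangle=0$ for every $\phi\in C^\infty([0,T];S)$ with $\phi(T,x)\equiv 0$, where I write $L^*\phi:=i\phi_t - a\phi_{xx}+ib\phi_x+i\phi_{xxx}$. Hence the whole statement reduces to showing that $L^*$ maps the space of admissible test functions $\{\phi\in C^\infty([0,T];S):\phi(T,\cdot)\equiv 0\}$ \emph{onto} $C^\infty([0,T];S)$.

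So, given an arbitrary $\psi\in C^\infty([0,T];S)$, I would solve the backward problem $L^*\phi=\psi$, $\phi(T,\cdot)\equiv 0$, by reducing it to the forward problem already covered by Lemma~\ref{L2.1}. Put $v(t,x):=\phi(T-t,-x)$; under the reflection $(t,x)\mapsto(T-t,-x)$ the operators $\partial_t$, $\partial_x$, $\partial_x^3$ change sign while $\partial_x^2$ does not, so the equation $L^*\phi=\psi$ becomes $i v_t + a v_{xx} + i b v_x + i v_{xxx} = -\psi(T-t,-x)$, together with $v(0,\cdot)=\phi(T,-\cdot)\equiv 0$. This is precisely a problem of the form \eqref{2.1}, \eqref{2.2} with right-hand side $-\psi(T-\cdot,-\cdot)\in C^\infty([0,T];S)$ and zero initial data, so Lemma~\ref{L2.1} yields a solution $v\in C^\infty([0,T];S)$. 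Reversing the substitution, $\phi(t,x):=v(T-t,-x)$ belongs to $C^\infty([0,T];S)$, satisfies $\phi(T,x)=v(0,-x)\equiv 0$, and $L^*\phi=\psi$. Substituting this $\phi$ into the homogeneous identity gives $\langle u,\psi\rangle=0$, and since $\psi$ was arbitrary, $u=0$ in $\bigl(C^\infty([0,T];S)\bigr)'$.

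I do not expect a genuine obstacle here; the only step needing care is the sign bookkeeping in the reflection $(t,x)\mapsto(T-t,-x)$ that turns the backward adjoint equation into a forward equation of the type handled by Lemma~\ref{L2.1}. It is also worth emphasizing that no endpoint condition at $t=0$ is imposed on $\psi$, so what must be verified is surjectivity of $L^*$ onto the full space $C^\infty([0,T];S)$ — and that is exactly what the construction above provides, at the cost of $\phi(0,\cdot)$ being in general nonzero, which is harmless.
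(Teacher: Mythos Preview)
Your proposal is correct and is precisely the standard H\"olmgren (duality) argument the paper invokes; the paper's own proof simply states that the lemma follows from Lemma~\ref{L2.1} by this argument, without spelling out the reflection $(t,x)\mapsto(T-t,-x)$ that you carry out explicitly. Your sign bookkeeping is accurate, so the reduction to Lemma~\ref{L2.1} goes through as written.
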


\begin{proof}
This lemma succeeds from Lemma~\ref{L2.1}  by the standard H\"olmgren argument.
\end{proof}

Now consider the differential operator 
\begin{equation}\label{2.5}
\EuScript L(\partial_t, \partial_x) = \partial_t +\partial_x^3 - i a \partial_x^2 + b\partial_x.
\end{equation}
The general theory of linear differential operators provides that its fundamental solution is written in a form
\begin{equation}\label{2.6}
G(t,x) = \theta(t) \EuScript F^{-1} \bigl[e^{it (\xi^3 - a \xi^2 -b\xi)}\bigr](x),
\end{equation}
where $\theta$ is the Heaviside function.

\begin{theorem}\label{T2.1} 
Let $u_0 \in L_2^{1/4+\varepsilon}$, $f_{11}\in L_1(0,T; L_2^{1/4+\varepsilon})$, $f_{12} \in L_1(0,T;L_1^{-1/4})$, $f_2\in L_1(0,T;L_1^{1/4})$ for certain $\varepsilon>0$ and $T>0$, $f_1 \equiv f_{11} + f_{12}$, $f\equiv f_1 + f_{2x}$. Then there exists a generalized solution $u(t,x)$ to problem \eqref{2.1}, \eqref{2.2} in the strip $\Pi_T$ such that at any point $(t,x)\in \Pi_T$
\begin{multline}\label{2.7}
u(t,x) = \int G(t,x-y) u_0(y) \,dy -i \int_0^t \!\! \int G(t-\tau,x-y) f_1(\tau,y)\, dy d\tau \\ -i
\int_0^t\!\! \int G_x(t-\tau, x-y) f_2(\tau,y)\, dy d\tau.
\end{multline}
\end{theorem}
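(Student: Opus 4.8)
The plan is to define $u(t,x)$ directly by the right-hand side of \eqref{2.7}, to check that under the stated hypotheses each of the three integrals converges absolutely (so that $u$ is well defined pointwise on $\Pi_T$, in particular as an element of $\bigl(C^\infty([0,T];\EuScript S)\bigr)'$), and then to verify the integral identity \eqref{2.4} by an approximation argument resting on Lemma~\ref{L2.1} and Remark~\ref{R2.1}.

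The first step is to collect the pointwise and integral bounds on $G$ and $G_x$; these are exactly the content of Appendix~\ref{A}. The decisive feature is that, since the symbol $\xi^3-a\xi^2-b\xi$ is a cubic with non-vanishing second derivative, $G(t,\cdot)$ behaves like an Airy-type kernel with $t^{-1/3}$ temporal scaling ($t^{-2/3}$ for $G_x$): it decays rapidly as its spatial argument tends to $+\infty$, but only like a negative power, with oscillations, as the argument tends to $-\infty$. Consequently, for fixed $t>0$, weighted quantities such as $\|(1+y_+)^{-(1/4+\varepsilon)}G(t,x-y)\|_{L_2(dy)}$, $\|(1+y_+)^{1/4}G(t,x-y)\|_{L_\infty(dy)}$ and the analogous expressions for $G_x$ are finite uniformly in $x$, with an integrable-in-$t$ singularity at $t=0$.

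The second step uses these kernel bounds together with H\"older's inequality in $y$ and Minkowski's inequality in $\tau$ to estimate the three terms of \eqref{2.7} separately. The free term is controlled by pairing $G(t,x-\cdot)$ against $u_0\in L_2^{1/4+\varepsilon}$; the $f_1$-term is split as $f_{11}+f_{12}$, the $f_{11}$ part being paired in the weighted $L_2$ scale and the $f_{12}$ part in the weighted $L_1$ scale against the correspondingly weighted kernel; and the $f_2$-term is handled by pairing $G_x(t-\tau,x-\cdot)$ against $f_2\in L_1^{1/4}$. In each case the $t^{-1/3}$ (resp. $t^{-2/3}$) kernel singularity is integrable in $\tau$ over $(0,t)$ and is absorbed by the $L_1(0,T;\dots)$ hypotheses on the data; the one-sided weights are precisely what makes the spatial integrals converge at $+\infty$. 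Hence all three integrals in \eqref{2.7} converge and $u$ is well defined.

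Finally, to see that this $u$ is a generalized solution, I would approximate: pick sequences $u_0^n,\,f_{11}^n,\,f_{12}^n,\,f_2^n\in\EuScript S$ (or in $C^\infty([0,T];\EuScript S)$) converging to $u_0,\,f_{11},\,f_{12},\,f_2$ in the norms of $L_2^{1/4+\varepsilon}$, $L_1(0,T;L_2^{1/4+\varepsilon})$, $L_1(0,T;L_1^{-1/4})$, $L_1(0,T;L_1^{1/4})$ respectively. By Lemma~\ref{L2.1} the corresponding smooth solutions $u^n$ exist, and formula \eqref{2.3}, by the very definition \eqref{2.6} of $G$, shows that each $u^n$ is given by \eqref{2.7} with the approximate data; being classical, they satisfy \eqref{2.4} with the approximate data. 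The estimates of the second step show that $u^n\to u$ in a topology strong enough (e.g.\ in the sense of distributions on $\Pi_T$ against test functions from $C^\infty([0,T];\EuScript S)$), so that one may pass to the limit in the linear identity \eqref{2.4}, whence $u$ satisfies \eqref{2.4}. I expect the main obstacle to lie in the second step: establishing the weighted kernel estimates, in particular tracking how the one-sided weight $(1+x_+)^\alpha$ interacts with the slowly decaying, oscillatory $-\infty$ tail of the Airy-type kernel $G$ and its derivative, and verifying that the exponents $1/4+\varepsilon$, $-1/4$, $1/4$ in the hypotheses are exactly those needed for convergence at spatial infinity while keeping the $t$-singularity integrable.
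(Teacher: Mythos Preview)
Your approach is essentially the paper's: establish pointwise bounds on the three integrals in \eqref{2.7} via the Airy-type estimates \eqref{A.26}, then approximate the data by smooth functions (for which Lemma~\ref{L2.1} and \eqref{2.6} give \eqref{2.7} directly) and pass to the limit in \eqref{2.4}. Two small corrections to your sketch: on the slowly-decaying side the temporal singularity of $G_x$ is $t^{-3/4}$ (since $q(1)=3/4$ in \eqref{A.26}), not $t^{-2/3}$, though this is still integrable; and the weighted kernel norms are not bounded uniformly in $x$ but grow polynomially in $|x|$ (see \eqref{2.9}--\eqref{2.12}), which is why the paper takes the limit in the space $L_1\bigl(0,T;L_\infty^{(1+|x|)^{-1/2-2\varepsilon}}\bigr)\subset \bigl(C^\infty([0,T];\EuScript S)\bigr)'$ rather than in an unweighted $L_\infty$ space.
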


\begin{proof}
First let $u_0 \in \EuScript S$, $f_1,f_2 \in C^\infty([0,T];\EuScript S)$. Then Lemma~\ref{L2.1} yields that a solution $u\in  C^\infty([0,T];\EuScript S)$ to the considered problem exists. According to \eqref{2.3}
\begin{equation}\label{2.8}
u(t,x) = \EuScript F^{-1}\Bigl[e^{i(\xi^3 -a \xi^2 -b\xi)t} \widehat u_0(\xi) \Bigr](x)   -i
\int_0^t  \EuScript F^{-1} \Bigl[e^{i(\xi^3 -a \xi^2 -b\xi)(t-\tau)} \widehat f(\tau,\xi)\Bigr](x) \,d\tau
\end{equation}
and with the use of \eqref{2.6} we obtain \eqref{2.7} in the smooth case.

Inequalities \eqref{A.26} yield that
\begin{multline*}
\Bigl| \int G(t,x-y) u_0(y) \,dy \Bigr| \leq \frac{c(T)}{t^{1/3}} \Bigl[ \int_x^{+\infty} (1+y-x)^{-1/4} |u_0(y)| \,dy  \\ +
\int_{-\infty}^x e^{-c_0(x-y)^{3/2}T^{-1/2}} |u_0(y)| \,dy \Bigr]
\end{multline*} 
whence, since $(1+y-x)^{-1/4} = (1+y-x)^{-1/2-\varepsilon} (1+y-x)^{1/4+\varepsilon}$ and $(1+y-x) \leq (1+y_+)(1+x_-)$, it follows that
\begin{multline}\label{2.9}
\Bigl| \int G(t,x-y) u_0(y) \,dy \Bigr| \leq \frac{c(T,\varepsilon)}{t^{1/3}} \Bigl[\Bigl(\int_x^{+\infty} (1+y-x)^{1/2+2\varepsilon} |u_0(y)|^2\, dy \Bigr)^{1/2} \\ +
\Bigl(\int_{-\infty}^x |u_0(y)|^2\, dy\Bigr)^{1/2}\Bigr]  \leq
\frac{c(T,\varepsilon)}{t^{1/3}} (1+x_-)^{1/4+\varepsilon} \|u_0\|_{L_2^{1/4+\varepsilon}},
\end{multline}
where $x_- = \max(-x,0)$.
Similarly,
\begin{multline}\label{2.10} \displaystyle
\int_0^T \Bigl| \int_0^t \!\! \int G(t-\tau,x-y) f_{11}(\tau,y)\, dy d\tau \Bigr| \,dt \\ \leq
c(T,\varepsilon)  (1+x_-)^{1/4+\varepsilon} \int_0^T\!\! \int_0^t \frac{\|f_{11}(\tau,\cdot)\|_{L_2^{1/4+\varepsilon}}}{(t-\tau)^{1/3}} \,d\tau dt \\ \leq
c_1(T,\varepsilon)  (1+x_-)^{1/4+\varepsilon} \|f_{11}\|_{L_1(0,T;L_2^{1/4+\varepsilon})}.
\end{multline}
Next, since $1+y_+ \leq (1+x_+)(1+y-x)$ if $x\leq y$, it follows that
\begin{multline}\label{2.11}
\int_0^T \Bigl| \int_0^t \!\! \int G(t-\tau,x-y) f_{12}(\tau,y)\, dy d\tau \Bigr| \,dt \\ \leq
c(T) \int_0^T\!\! \int_0^t \frac1{(t-\tau)^{1/3}} \Bigl[ \int_x^{+\infty} (1+y-x)^{-1/4} |f_{12}(\tau,y)|\, dy \\+
\int_{-\infty}^x e^{-c_0(x-y)^{3/2}T^{-1/2}} |f_{12}(\tau,y)|\, dy \Bigr]\,d\tau dt  \leq
c_1(T) (1+x_+)^{1/4} \|f_{12}\|_{L_1(0,T;L_1^{-1/4})}.
\end{multline}
Finally,
\begin{multline}\label{2.12}
\int_0^T \Bigl| \int_0^t \!\! \int G_x(t-\tau,x-y) f_2(\tau,y)\, dy d\tau \Bigr| \,dt \\ \leq
c(T) \int_0^T\!\! \int_0^t \frac1{(t-\tau)^{3/4}} \Bigl[ \int_x^{+\infty} (1+y-x)^{1/4} |f_2(\tau,y)|\, dy \\+
\int_{-\infty}^x e^{-c_0(x-y)^{3/2}T^{-1/2}} |f_2(\tau,y)|\, dy \Bigr]\,d\tau dt  \leq
c_1(T) (1+x_-)^{1/4} \|f_2\|_{L_1(0,T;L_1^{1/4})}.
\end{multline}
Estimates \eqref{2.9}--\eqref{2.12} provide an inequality
\begin{multline}\label{2.13}
\|u\|_{L_1(0,T;L_\infty^{(1+|x|)^{-1/2-2\varepsilon}})} \leq c(T,\varepsilon) \bigl[ \|u_0\|_{L_2^{1/4+\varepsilon}} +
\|f_{11}\|_{L_1(0,T;L_2^{1/4+\varepsilon})} \\+\|f_{12}\|_{L_1(0,T;L_1^{-1/4})}  +\|f_2\|_{L_1(0,T;L_1^{1/4})} \bigr].
\end{multline}

Now consider the general case. Approximate the functions $u_0$, $f_{11}$, $f_{12}$ and $f_2$ in the corresponding spaces by the smooth ones. Then on the basis of estimate \eqref{2.13} the function $u(t,x)$ defined by formula \eqref{2.7} can be obtained by the passage to the limit in the space $L_1(0,T;L_\infty^{(1+|x|)^{-1/2-2\varepsilon}}) \subset \bigl(C^\infty([0,T];S)\bigr)'$ of the sequence of smooth solutions. Moreover, writing down corresponding equalities \eqref{2.4} for smooth solutions and passing to the limit we obtain that the function $u$ also verifies equality \eqref{2.4}. Note, that estimates \eqref{2.9}--\eqref{2.12} also hold in the non-smooth case.
\end{proof}

\begin{lemma}\label{L2.3}
Let $u_0\in L_2$, $f\in L_1(0,T;L_2)$ for certain $T>0$. Then there exists a weak solution to problem \eqref{2.1}, \eqref{2.2} $u\in C([0,T];L_2)$,
\begin{equation}\label{2.14}
\|u\|_{C([0,T];L_2)} \leq \|u_0\|_{L_2} +\|f\|_{L_1(0,T;L_2)}
\end{equation}
and for a.e. $t\in (0,T)$
\begin{equation}\label{2.15}
\frac{d}{dt} \int |u|^2\,dx = 2 \Im \int f \bar u\,dx.
\end{equation}
Moreover, for any non-negative non-decreasing function $\psi\in C^3_b$, $\psi' \not\equiv 0$, 
$u_x \in L_2(0,T; L_2^{\psi'(x)})$,
\begin{equation}\label{2.16}
\|u_x\|_{L_2(0,T; L_2^{\psi'(x)})} \leq c(T,\psi)\bigl(\|u_0\|_{L_2} +\|f\|_{L_1(0,T;L_2)}\bigr)
\end{equation}
and for a.e. $t\in (0,T)$
\begin{multline}\label{2.17}
\frac{d}{dt}\int |u(t,x)|^2\psi(x)\, dx +3 \int |u_x|^2\psi' \,dx = 2a \Im \int u_x \bar u \psi'\, dx \\ +
 \int |u|^2 \psi'''\, dx + b \int |u|^2 \psi' \, dx + 2\Im \int f \bar u \psi\, dx. 
\end{multline}
\end{lemma}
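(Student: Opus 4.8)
The plan is to establish all four assertions first for smooth data, where every formal manipulation is legitimate, and then to pass to the limit using the linearity of the problem together with the a priori bounds \eqref{2.14} and \eqref{2.16}.

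\emph{Smooth case.} Let $u_0\in\EuScript S$, $f\in C^\infty([0,T];\EuScript S)$; by Lemma~\ref{L2.1} the solution $u\in C^\infty([0,T];\EuScript S)$ exists. Multiply \eqref{2.1} by $2\bar u$, take the imaginary part and integrate over $\mathbb R$: the contributions of $au_{xx}$, $ibu_x$ and $iu_{xxx}$ all vanish after integration by parts (as in the computations displayed in Section~\ref{S1} for the NLS and KdV terms, the third‑order term producing $\int\partial_x|u_x|^2\,dx=0$), which gives \eqref{2.15}. From \eqref{2.15} and Cauchy–Schwarz, $\frac{d}{dt}\|u(t,\cdot)\|_{L_2}\le\|f(t,\cdot)\|_{L_2}$, and integration in $t$ yields \eqref{2.14}.

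Next multiply \eqref{2.1} by $2\bar u\psi$, take the imaginary part and integrate. The term $2iu_t\bar u\psi$ gives $\frac{d}{dt}\int|u|^2\psi\,dx$; the term $ibu_x\bar u\psi$ gives, after one integration by parts, $-b\int|u|^2\psi'\,dx$; the term $au_{xx}\bar u\psi$ gives, after integration by parts, $-2a\Im\int u_x\bar u\psi'\,dx$; and $iu_{xxx}\bar u\psi$, after three integrations by parts together with $2\Re(u_{xx}\bar u_x)=\partial_x|u_x|^2$ and $2\Re(u_x\bar u)=\partial_x|u|^2$, gives $3\int|u_x|^2\psi'\,dx-\int|u|^2\psi'''\,dx$. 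Collecting the terms gives \eqref{2.17}. Integrating \eqref{2.17} in $t$, using $\psi\ge0$ and $\psi'\ge0$ to drop $\int|u(t,\cdot)|^2\psi\,dx\ge0$ and to apply the Cauchy–Young inequality $2|a|\,|u_x|\,|u|\,\psi'\le\frac32|u_x|^2\psi'+\frac{2a^2}{3}|u|^2\psi'$ so that half of the good term $3\int_0^t\!\!\int|u_x|^2\psi'$ is absorbed, and bounding the remaining lower‑order terms by $\|\psi'''\|_{L_\infty}$, $\|\psi'\|_{L_\infty}$, $\|\psi\|_{L_\infty}$ and the already proved bound \eqref{2.14}, one obtains \eqref{2.16} in the smooth case.

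\emph{Passage to the limit.} For general $u_0\in L_2$ and $f\in L_1(0,T;L_2)$ choose $u_0^n\in\EuScript S$ with $u_0^n\to u_0$ in $L_2$ and $f^n\in C^\infty([0,T];\EuScript S)$ with $f^n\to f$ in $L_1(0,T;L_2)$, and let $u^n$ be the corresponding smooth solutions. Applying \eqref{2.14} to the difference $u^n-u^m$ (the solution with data $u_0^n-u_0^m$, $f^n-f^m$) shows that $\{u^n\}$ is Cauchy in $C([0,T];L_2)$; its limit $u\in C([0,T];L_2)$ satisfies \eqref{2.14}, and writing the integral identity defining a weak solution (the linear analog of \eqref{1.3}) for each $u^n$ and passing to the limit — all the integrals converging — shows $u$ is a weak solution. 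Applying \eqref{2.16} to $u^n-u^m$ shows $\{u^n_x\}$ is Cauchy in $L_2(0,T;L_2^{\psi'(x)})$, so it converges \emph{strongly} there, necessarily to $u_x$; hence $u_x\in L_2(0,T;L_2^{\psi'(x)})$ with \eqref{2.16}. Integrating \eqref{2.15} and \eqref{2.17} in $t$ for $u^n$, every term converges as $n\to\infty$ (the products $\int f^n\bar u^n\,dx$, $\int u^n_x\bar u^n\psi'\,dx$ by strong $L_2$‑convergence, and the quadratic term $\int|u^n_x|^2\psi'\,dx$ by the strong convergence of $u^n_x$ just obtained), so \eqref{2.15} and \eqref{2.17} hold for $u$ in time‑integrated form for every $t$; since their right‑hand sides are then absolutely continuous in $t$ — integrals of $L_1(0,T)$ functions — differentiation yields \eqref{2.15} and \eqref{2.17} for a.e. $t\in(0,T)$.

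\emph{Main obstacle.} The one non‑routine point is the limit in the quadratic term $\int_0^t\!\!\int|u^n_x|^2\psi'\,dx\,d\tau$: weak convergence of $u^n_x$ in $L_2(0,T;L_2^{\psi'(x)})$ by itself only controls this term from below and would degrade \eqref{2.17} to an inequality. This is circumvented by proving \eqref{2.16} for smooth solutions \emph{before} passing to the limit and then applying it to the differences $u^n-u^m$, which promotes weak to strong convergence and closes the argument.
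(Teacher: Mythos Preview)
Your proof is correct and follows essentially the same route as the paper: derive the weighted identity by multiplying by $2\bar u\psi$ and taking the imaginary part (the paper does this once for general $\psi$ and then specializes to $\psi\equiv 1$, you do the two cases separately), extract \eqref{2.14} and \eqref{2.16} for smooth solutions, and pass to the limit by approximation. Your explicit handling of the quadratic term $\int|u^n_x|^2\psi'$ via strong convergence obtained from \eqref{2.16} applied to differences is exactly what the paper's terse ``closure finishes the proof'' encapsulates.
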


\begin{proof}
First assume that the functions $u_0$ and $f$ satisfy the hypothesis of Lemma~\ref{L2.1} and consider the corresponding smooth solution $u(t,x)$. Let $\psi\in C^3_b$. Multiply equation \eqref{2.1} by $2\bar u(t,x) \psi(x)$, integrate over $\mathbb R$ and take the imaginary part, then
\begin{multline}\label{2.18}
\int \Im 2iu_t \bar u \psi\,dx + a \int 2\Im u_{xx} \bar u\psi \, dx + b \int \Im 2i u_x \bar u\psi \, dx   + \int \Im 2i u_{xxx} \bar u\psi \, dx  \\= 2\Im \int f \bar u \psi\, dx. 
\end{multline}
Here
$$
\int \Im 2i u_t \bar u\psi \,dx = \int \Re 2 u_t \bar u\psi \, dx = \int (u_t \bar u + u \bar u_t)\psi \,dx = \frac{d}{dt} \int |u|^2\psi \,dx,
$$
$$
\int 2\Im u_{xx} \bar u\psi \,dx = -2\int \Im (|u_x|^2\psi +u_x \bar u \psi')\, dx = -2\Im \int u_x \bar u\psi'\, dx,
$$
$$
\int \Im 2i u_{x}\bar u\psi \,dx = \int 2\Re u_{x} \bar u \psi \, dx = \int (u_{x}\bar u + u \bar u_{x})\psi \, dx  = 
-\int |u|^2\psi' \,dx,
$$
\begin{multline*}
\int \Im 2i u_{xxx}\bar u\psi\,dx = \int 2\Re u_{xxx} \bar u\psi\, dx = \int (u_{xxx}\bar u + u \bar u_{xxx})\psi\, dx \\ = 
-\int (u_{xx} \bar u_x + u_x \bar u_{xx})\psi\, dx - \int (u_{xx}\bar u + u\bar u_{xx})\psi' \,dx \\ =
-\int (|u_x|^2)_x\psi \,dx + 2\int |u_x|^2 \psi'\, dx + \int (|u|^2)_x \psi''\, dx \\ =
3\int |u_x|^2 \psi'\, dx - \int |u|^2 \psi''' \,dx.
\end{multline*}

Choose in \eqref{2.18} $\psi(x) \equiv 1$, then equality \eqref{2.13} follows for smooth solutions which, in turn, 
provides  estimate \eqref{2.14} and for any $t\in [0,T]$ an equality 
\begin{equation}\label{2.19}
\int |u(t,x)|^2\,dx = \int |u_0|^2\,dx + 2 \Im \int_0^t \int f(\tau,x) \bar u(\tau,x)\, dx d\tau.
\end{equation}
Applying closure on the basis of estimate \eqref{2.14} for smooth solutions we obtain the result on existence in the space $C([0,T];L_2)$, estimate \eqref{2.14} for such solutions and equality \eqref{2.19}. Since $\int f(t,x) \bar u(t,x)\, dx \in L_1(0,T)$, the right-hand side of equality \eqref{2.19} is absolutely continuous with respect to $t$ and equality \eqref{2.15} succeeds.

Now let $\psi$ be non-negative, non-decreasing and $\psi\not\equiv 0$. Then equality \eqref{2.18} yields equality \eqref{2.17} for smooth solutions. With the use of estimate \eqref{2.14} this equality insures estimate \eqref{2.16}. Closure finishes the proof.
\end{proof}

\begin{lemma}\label{L2.4}
Let $u\in L_\infty(0,T;L_2^{\alpha})$, $\alpha\geq 3/4$,  be a weak solution to problem \eqref{2.1}, \eqref{2.2} in a certain strip $\Pi_T$, where $u_0\equiv 0$, $f\equiv f_1+ f_{2x}$, $f_1 \in L_{p_1}(0,T;L_1^{\alpha+1/4})$, $f_2\in L_{p_2}(0,T;L_1^{\alpha+3/4})$ for certain $p_1> 3/2$, $p_2> 4$. Then for any $\epsilon>0$
\begin{multline}\label{2.20}
\|u\|_{L_\infty(0,T;L_2^{\rho_{\alpha,\epsilon}(x)})} \leq c(T,\alpha,\epsilon,p_1,p_2) \bigl[ \|f_1\|_{L_{p_1}(0,T;L_1^{\rho_{\alpha,\epsilon}(x)(1+x_+)^{1/2}})} \\+
 \|f_2\|_{L_{p_2}(0,T;L_1^{\rho_{\alpha,\epsilon}(x)(1+x_+)^{3/2}})}.
\end{multline}
\end{lemma}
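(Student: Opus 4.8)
The plan is to identify $u$ with the Duhamel representation \eqref{2.7} furnished by Theorem~\ref{T2.1} and then to estimate its two integral terms directly in the weighted $L_2$-norm, the only input being the pointwise bounds on $G$ and $G_x$ from Appendix~\ref{A}. First I would check that the hypotheses of Theorem~\ref{T2.1} hold with $f_{11}\equiv0$, $f_{12}\equiv f_1$: since $\alpha\geq3/4$ one has the embeddings $L_1^{\alpha+1/4}\hookrightarrow L_1^{-1/4}$, $L_1^{\alpha+3/4}\hookrightarrow L_1^{1/4}$, and $L_{p_i}(0,T;\cdot)\hookrightarrow L_1(0,T;\cdot)$ on the finite interval, while $u_0\equiv0$. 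Theorem~\ref{T2.1} then produces a generalized solution given by \eqref{2.7}; since $u$ is itself a generalized solution (Remark~\ref{R2.1}) and the generalized solution is unique (Lemma~\ref{L2.2}), $u$ coincides almost everywhere with the right-hand side of \eqref{2.7}. The assumption $u\in L_\infty(0,T;L_2^\alpha)$ is used here only to place $u$ in a class in which the notion of weak solution with the given data is meaningful and the weighted norms below are a priori finite; it does not reappear in the estimate.

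The heart of the matter is a pair of weighted $L_2$-in-$x$ bounds for the kernels, valid for $0<s\leq T$ and all $y\in\mathbb R$:
\[
\Bigl(\int\rho_{\alpha,\epsilon}(x)\,|G(s,x-y)|^2\,dx\Bigr)^{1/2}\leq\frac{c(T,\alpha,\epsilon)}{s^{1/3}}\,\rho_{\alpha,\epsilon}^{1/2}(y)(1+y_+)^{1/4},
\]
\[
\Bigl(\int\rho_{\alpha,\epsilon}(x)\,|G_x(s,x-y)|^2\,dx\Bigr)^{1/2}\leq\frac{c(T,\alpha,\epsilon)}{s^{3/4}}\,\rho_{\alpha,\epsilon}^{1/2}(y)(1+y_+)^{3/4}.
\]
Both follow from inequalities \eqref{A.26} (the very bounds already used in the proof of Theorem~\ref{T2.1}), according to which $|G(s,z)|\leq c\,s^{-1/3}(1+|z|)^{-1/4}$ and $|G_x(s,z)|\leq c\,s^{-3/4}(1+|z|)^{1/4}$ for $z<0$, whereas for $z>0$ they are bounded by $c\,s^{-1/3}e^{-c_0|z|^{3/2}T^{-1/2}}$, respectively $c\,s^{-3/4}e^{-c_0|z|^{3/2}T^{-1/2}}$. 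In the range $x<y$ I would use that $\rho_{\alpha,\epsilon}$ is increasing, so $\rho_{\alpha,\epsilon}(x)\leq\rho_{\alpha,\epsilon}(y)$, and split $\int_{-\infty}^y$ at $x=-1$: the half-line $(-\infty,-1)$ is absorbed by the exponential decay $\rho_{\alpha,\epsilon}(x)=e^{2\epsilon x}$, while on $(-1,y)$ the non-integrable factor $(1+y-x)^{\mp1/2}$ contributes at worst the polynomial power $(1+y_+)^{1/2}$, respectively $(1+y_+)^{3/2}$. In the range $x>y$ I would instead keep the ratio $\rho_{\alpha,\epsilon}(x)/\rho_{\alpha,\epsilon}(y)$ and absorb it into $e^{-2c_0(x-y)^{3/2}T^{-1/2}}$, using $1+x\leq(1+y_+)(1+x-y)$ when $y\geq0$ and $(x-y)^{3/2}\geq(x+1)^{3/2}+(|y|-1)^{3/2}$ when $x\geq-1\geq y$.

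With these in hand I would write $u=u^{(1)}+u^{(2)}$ according to the two integral terms of \eqref{2.7} (forcings $f_1$ and $f_2$). Two applications of Minkowski's integral inequality give
\[
\|u^{(1)}(t,\cdot)\|_{L_2^{\rho_{\alpha,\epsilon}(x)}}\leq\int_0^t\!\!\int|f_1(\tau,y)|\Bigl(\int\rho_{\alpha,\epsilon}(x)|G(t-\tau,x-y)|^2\,dx\Bigr)^{1/2}dy\,d\tau\leq c\int_0^t\frac{\|f_1(\tau,\cdot)\|_{L_1^{\rho_{\alpha,\epsilon}(x)(1+x_+)^{1/2}}}}{(t-\tau)^{1/3}}\,d\tau,
\]
and analogously for $u^{(2)}$ with $G_x$, exponent $3/4$, and the weight $\rho_{\alpha,\epsilon}(x)(1+x_+)^{3/2}$. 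Hölder's inequality in $\tau$ then yields \eqref{2.20}, with a constant uniform in $t\in[0,T]$, because $(t-\tau)^{-1/3}\in L_{p_1'}(0,T)$ exactly when $p_1>3/2$ and $(t-\tau)^{-3/4}\in L_{p_2'}(0,T)$ exactly when $p_2>4$.

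I expect the second step to be the main obstacle: one must check that the slowly decaying part ($z<0$) of each kernel, integrated against the weight $\rho_{\alpha,\epsilon}$, generates no more than the powers $(1+y_+)^{1/2}$ and $(1+y_+)^{3/2}$ permitted by the hypotheses on $f_1$ and $f_2$, which requires a somewhat delicate case analysis over the signs of $x$, $y$ and their relative position, exploiting both the exponential smallness of $\rho_{\alpha,\epsilon}$ at $-\infty$ and the faster-than-exponential spatial decay of $G$, $G_x$ on the opposite side. A purely energy-based argument (multiplying \eqref{2.1} by $2\bar u\rho_{\alpha,\epsilon}$ as in \eqref{2.17}) does not seem to close here, because the contribution of $f_{2x}$ would, after integration by parts, require control of $f_2$ in a space quadratic in $f_2$, which is incompatible with the $L_1$-in-$x$ hypothesis; this is precisely why the representation formula is used instead.
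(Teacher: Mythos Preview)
Your proposal is correct and follows essentially the same route as the paper: identify $u$ with the Duhamel formula \eqref{2.7} via Theorem~\ref{T2.1} and uniqueness, then bound each term by Minkowski's inequality and a weighted $L_2$ kernel estimate, finishing with H\"older in time. The two weighted kernel bounds you derive are precisely the content of Lemma~\ref{LA.3} (inequality \eqref{A.30}), which the paper proves separately in Appendix~\ref{A} by the same case analysis you sketch; the paper's proof of Lemma~\ref{L2.4} simply quotes \eqref{A.30} rather than rederiving it inline.
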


\begin{proof}
First of all note that $u\in L_\infty(0,T;L_2^{\rho_{\alpha,\epsilon(x)}})$. Apply Theorem~\ref{T2.1} and Lemma~\ref{LA.3}, then equality \eqref{2.7} and inequality \eqref{A.30} (for $f_1$ in the case $n=0$ and for $f_2$ in the case $n=1$) imply that for any $t\in (0,T]$
\begin{multline*}
\|u(t,\cdot)\|_{L_2^{\rho_{\alpha,\epsilon}(x)}} = \|u(t,\cdot)\rho^{1/2}_{\alpha,\epsilon}\|_{L_2} \leq 
\int_0^t \!\! \int\|G(t-\tau,\cdot-y)\rho^{1/2}_{\alpha,\epsilon}\|_{L_2} |f_1(\tau,y)|\, dy d\tau \\ +
\int_0^t \!\! \int\|G_x(t-\tau,\cdot-y)\rho^{1/2}_{\alpha,\epsilon}\|_{L_2} |f_2(\tau,y)|\, dy d\tau \\ \leq
c\int_0^t \frac1{(t-\tau)^{1/3}}\int |f_1(\tau,y)| \rho_{\alpha,\epsilon}^{1/2}(y)(1+y_+)^{1/4}\, dy d\tau \\+
c\int_0^t \frac1{(t-\tau)^{3/4}}\int |f_2(\tau,y)| \rho_{\alpha,\epsilon}^{1/2}(y)(1+y_+)^{3/4}\, dy d\tau  \\ \leq
c_1\|f_1\|_{L_{p_1}(0,T;L_1^{\rho_{\alpha,\epsilon}(x)(1+x_+)^{1/2}})} +
c_1\|f_2\|_{L_{p_2}(0,T;L_1^{\rho_{\alpha,\epsilon}(x)(1+x_+)^{3/2}})}.
\end{multline*}
\end{proof}

\begin{lemma}\label{L2.5}
Let $u_0 \in L_2^\alpha$, $f\in L_1(0,T;L_2^\alpha)$ for certain $\alpha>0$, $T>0$. Then there exists a weak solution $u(t,x)$ to problem \eqref{2.1}, \eqref{2.2} such that $u\in C([0,T];L_2^\alpha)$, $u_x \in L_2(0,T; L_2^{\rho'_{\alpha,\epsilon}(x)})$ for any $\epsilon>0$,
\begin{equation}\label{2.21}
\|u\|_{C([0,T];L_2^\alpha)} + \|u_x\|_{L_2(0,T; L_2^{\rho'_{\alpha,\epsilon}(x)})} \leq c(T,\alpha,\epsilon) \bigl[ \|u_0\|_{L_2^\alpha} + \|f\|_{L_1(0,T;L_2^\alpha)} \bigr]
\end{equation}
and for a.e $t\in (0,T)$ and $\rho(x) \equiv \rho_{\alpha,\epsilon}(x)$
\begin{multline}\label{2.22}
\frac{d}{dt}\int |u(t,x)|^2\rho(x)\, dx +3\int |u_x|^2\rho' \,dx = 2a \Im \int u_x \bar u \rho'\, dx \\ +
\int |u|^2 \rho'''\, dx +  b\int |u|^2 \rho' \, dx + 2\Im \int f \bar u \rho\, dx. 
\end{multline}
\end{lemma}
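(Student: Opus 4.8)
The plan is to follow the scheme of the proof of Lemma~\ref{L2.3}: first derive the weighted identity \eqref{2.22} and estimate \eqref{2.21} for smooth data, then treat the general case by approximation. Throughout put $\rho\equiv\rho_{\alpha,\epsilon}$. So first suppose $u_0\in\EuScript S$ and $f\in C^\infty([0,T];\EuScript S)$, whence by Lemma~\ref{L2.1} the solution satisfies $u\in C^\infty([0,T];\EuScript S)$. Multiply \eqref{2.1} by $2\bar u\rho$, integrate over $\mathbb R$ and take the imaginary part: this is exactly the computation \eqref{2.18} leading to \eqref{2.17}, but now with the \emph{unbounded} weight $\psi=\rho$, which is still legitimate because $u$ is a Schwartz function while $\rho$ and all its derivatives grow at most polynomially (indeed $\rho(x)\le(1+x_+)^{2\alpha}$); all integrals converge absolutely, no boundary terms appear, and we obtain \eqref{2.22} directly. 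Now bound the right-hand side of \eqref{2.22}: by the Cauchy--Schwarz and Young inequalities $2|a|\int|u_x||u|\rho'\,dx\le\frac32\int|u_x|^2\rho'\,dx+\frac{2a^2}3\int|u|^2\rho'\,dx$, the first term being absorbed into $3\int|u_x|^2\rho'\,dx$ on the left; by \eqref{1.6} (with $j=1$ and $j=3$) $|\rho'|+|\rho'''|\le c(\alpha,\epsilon)\rho$; and $2\Im\int f\bar u\rho\,dx\le2\|f(t,\cdot)\|_{L_2^{\rho(x)}}\|u(t,\cdot)\|_{L_2^{\rho(x)}}$. Hence, writing $y(t)\equiv\|u(t,\cdot)\|_{L_2^{\rho(x)}}^2=\int|u(t,x)|^2\rho\,dx$,
$$
\frac{d}{dt}\,y(t)+\frac32\int|u_x|^2\rho'\,dx\le c\,y(t)+2\|f(t,\cdot)\|_{L_2^{\rho(x)}}\,y(t)^{1/2},\qquad c=c(a,b,\alpha,\epsilon).
$$

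To integrate this, drop the non-negative term $\frac32\int|u_x|^2\rho'\,dx$ and, for $\delta>0$, set $z(t)\equiv g_\delta(y(t))=(y(t)+\delta)^{1/2}$; since $y'\le c(y+\delta)+2\|f(t,\cdot)\|_{L_2^{\rho(x)}}(y+\delta)^{1/2}$, dividing by $2z$ gives $z'(t)\le\frac c2\,z(t)+\|f(t,\cdot)\|_{L_2^{\rho(x)}}$, so Gronwall's inequality and then $\delta\to0$ yield
$$
\|u(t,\cdot)\|_{L_2^{\rho(x)}}\le c(T,\alpha,\epsilon)\bigl(\|u_0\|_{L_2^{\rho(x)}}+\|f\|_{L_1(0,T;L_2^{\rho(x)})}\bigr),\qquad t\in[0,T].
$$
Integrating the differential inequality over $(0,T)$ and using this estimate bounds $\|u_x\|_{L_2(0,T;L_2^{\rho'(x)})}^2$ by the square of the same right-hand side. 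Since $\rho\le(1+x_+)^{2\alpha}$ everywhere, $\|\cdot\|_{L_2^{\rho(x)}}\le\|\cdot\|_{L_2^\alpha}$, so these right-hand sides are dominated by $\|u_0\|_{L_2^\alpha}+\|f\|_{L_1(0,T;L_2^\alpha)}$; adding \eqref{2.14} of Lemma~\ref{L2.3}, which controls the $L_2$-norm on the region $x\le0$ where $(1+x_+)^{2\alpha}=1$ but $\rho$ decays to $0$, we obtain \eqref{2.21} for smooth solutions.

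Finally, approximate $u_0$ in $L_2^\alpha$ and $f$ in $L_1(0,T;L_2^\alpha)$ by smooth functions. Applying \eqref{2.21} (valid by linearity for the differences of the corresponding smooth solutions) shows that these solutions are Cauchy in $C([0,T];L_2^\alpha)$ with $x$-derivatives Cauchy in $L_2(0,T;L_2^{\rho'_{\alpha,\epsilon}(x)})$; the limit $u$ therefore belongs to those spaces and satisfies \eqref{2.21}, it is a weak solution, and by Lemma~\ref{L2.2} and Remark~\ref{R2.1} it coincides with the weak solution furnished by Lemma~\ref{L2.3}. Writing \eqref{2.22} for the smooth solutions in integrated form and passing to the limit term by term --- using the uniform convergence of $\int|u_n|^2\rho\,dx$, the convergence of $\int_0^t\!\int|u_{n,x}|^2\rho'\,dxd\tau$, and the convergence of the remaining bilinear terms --- gives the integrated form of \eqref{2.22} for the limit, and hence \eqref{2.22} itself for a.e.\ $t\in(0,T)$ after differentiation in $t$.

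I do not expect a genuine obstacle here: Lemma~\ref{L2.3} already supplies the smooth solutions and the basic energy identity. The only points needing care are to keep the argument at the Schwartz level so that the unbounded weight $\rho_{\alpha,\epsilon}$ creates no convergence difficulty, to repair the behaviour near $-\infty$ (where $\rho$ decays but the $L_2^\alpha$-weight does not) by invoking \eqref{2.14}, and to justify the passage to the limit in \eqref{2.22}, all of which are routine.
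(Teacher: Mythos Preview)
Your proof is correct and follows essentially the same route as the paper: derive \eqref{2.22} for Schwartz data by the multiplication of Lemma~\ref{L2.3} with the unbounded weight $\rho_{\alpha,\epsilon}$, use \eqref{1.6} to turn this into the a~priori estimate \eqref{2.21}, and then close by approximation. You are simply more explicit than the paper on two points it leaves to the reader --- the Gronwall step, and the need to supplement the $L_2^{\rho(x)}$ bound by the unweighted estimate \eqref{2.14} to recover the full $L_2^\alpha$ norm on the region $x\le 0$ where $\rho$ decays.
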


\begin{proof}
First assume that the functions $u_0$ and $f$ satisfy the hypothesis of Lemma~\ref{L2.1} and consider the corresponding smooth solution $u(t,x)$. Multiply equation \eqref{2.1} by $2\bar u(t,x)\rho(x)$, integrate over $\mathbb R$ and take the imaginary part, then equality \eqref{2.18} follows, where $\psi$ is substituted by $\rho$, whence equality \eqref{2.22} follows. In turn, with the use of properties \eqref{1.6} this equality provides estimate \eqref{2.21}. The end of the proof is similar to Lemma~\ref{L2.3}.
\end{proof}

\begin{lemma}\label{L2.6}
Let $u_0\in H^1$, $f\in L_1(0,T;H^1)$ for certain $T>0$. Then there exists a unique solution to problem \eqref{2.1}, \eqref{2.2} $u\in X(\Pi_T)$,
\begin{equation}\label{2.23}
\|u\|_{X(\Pi_T)} \leq c(T) \left( \|u_0\|_{H^1} + \|f\|_{L_1(0,T;H^1)} \right)
\end{equation}
and for a.e $t\in (0,T)$
\begin{equation}\label{2.24}
\frac{d}{dt} \int |u_x|^2\,dx = 2 \Im \int f_x \bar u_x\,dx,
\end{equation}
\begin{multline}\label{2.25}
\frac23 \frac{d}{dt} \int |u|^3\,dx + 2a \Im \int |u|_x u \bar u_x \,dx + 3\int |u|_x |u_x|^2\, dx  -
\int |u|_x \bigl(|u|^2\bigr)_{xx}\, dx \\= 2\Im \int f |u| \bar u\, dx,
\end{multline}
\begin{equation}\label{2.26}
i\frac{d}{dt} \int u \bar u_x\, dx = 2 \Re \int f \bar u_x\, dx.
\end{equation}
Moreover, for any non-negative non-decreasing function $\psi\in C^3_b$, $\psi' \not\equiv 0$, 
$u_{xx} \in L_2(0,T; L_2^{\psi'(x)})$,
\begin{equation}\label{2.27}
\|u_{xx}\|_{L_2(0,T; L_2^{\psi'(x)})} \leq c(T,\psi)\bigl(\|u_0\|_{H^1} +\|f\|_{L_1(0,T;H^1)}\bigr)
\end{equation}
and for a.e. $t\in (0,T)$
\begin{multline}\label{2.28}
\frac{d}{dt}\int |u_x(t,x)|^2\psi(x)\, dx +3 \int |u_{xx}|^2\psi' \,dx = 2a \Im \int u_{xx} \bar u_x \psi'\, dx \\ +
 \int |u_x|^2 \psi'''\, dx + b \int |u_x|^2 \psi' \, dx + 2\Im \int f_x \bar u_x \psi\, dx,
\end{multline}
\begin{multline}\label{2.29}
\frac23 \frac{d}{dt} \int |u|^3\psi \,dx + 2a \Im \int \bigl(|u|\psi\bigr)_x u \bar u_x \,dx + 
3\int \bigl(|u|\psi\bigr)_x |u_x|^2\, dx \\ -
\int \bigl(|u|\psi\bigr)_x \bigl(|u|^2\bigr)_{xx}\, dx -\frac23 b \int |u|^3\psi' \,dx 
= 2\Im \int f |u| \bar u \psi\, dx.
\end{multline}
\end{lemma}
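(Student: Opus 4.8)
The plan is to follow the scheme of Lemma~\ref{L2.3}: establish every asserted estimate and identity first for smooth data ($u_0\in\EuScript S$, $f\in C^\infty([0,T];\EuScript S)$, with the smooth solution $u\in C^\infty([0,T];\EuScript S)$ from Lemma~\ref{L2.1}), and then obtain the general case by closure. The crucial observation is that, since the operator in \eqref{2.1} has constant coefficients, $v:=u_x$ is the smooth solution of the same problem with initial data $u_{0x}\in L_2$ and right-hand side $f_x\in L_1(0,T;L_2)$. Applying Lemma~\ref{L2.3} to $v$ at once yields $v\in C([0,T];L_2)$ with $\|u_x\|_{C([0,T];L_2)}\leq\|u_{0x}\|_{L_2}+\|f_x\|_{L_1(0,T;L_2)}$, the smoothing bound \eqref{2.27} (which is \eqref{2.16} for $v$), and the identities \eqref{2.24}, \eqref{2.28} (which are exactly \eqref{2.15}, \eqref{2.17} for $v$). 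Together with Lemma~\ref{L2.3} applied to $u$ itself this already gives the $C([0,T];H^1)$ part of \eqref{2.23}.

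Identity \eqref{2.26} is a short direct computation for the smooth solution: after one integration by parts $\frac{d}{dt}\int u\bar u_x\,dx=2i\Im\int u_t\bar u_x\,dx$, and on substituting $u_t$ from \eqref{2.1} the contributions of $au_{xx}$, $bu_x$ and $u_{xxx}$ have zero imaginary part (they reduce to integrals of $\partial_x(|u_x|^2)$, of $|u_x|^2$, and of $|u_{xx}|^2$, all real), which leaves precisely \eqref{2.26}.

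The only genuinely delicate identities are \eqref{2.25} and \eqref{2.29}, because the natural multipliers $2|u|\bar u$ and $2|u|\psi\bar u$ are not differentiable at the zeros of $u$. Following Appendix~\ref{B}, I would replace $|u|$ by the regularization $g_\delta(|u|^2)=(|u|^2+\delta)^{1/2}$, which is smooth in $(t,x)$ along the smooth solution, multiply \eqref{2.1} by $2g_\delta(|u|^2)\bar u$ (resp.\ by $2g_\delta(|u|^2)\psi\bar u$), integrate over $\mathbb R$ and take imaginary parts. The time term becomes $\frac23\frac{d}{dt}\int(|u|^2+\delta)^{3/2}\,dx$ (resp.\ the same with weight $\psi$); the term $u_{xxx}$ contributes, after two integrations by parts, $3\int(g_\delta(|u|^2))_x|u_x|^2\,dx-\int(g_\delta(|u|^2))_x(|u|^2)_{xx}\,dx$; the term $bu_x$ contributes $-\frac23 b\int(|u|^2+\delta)^{3/2}\psi'\,dx$, which vanishes for $\psi\equiv1$; and the term $au_{xx}$ yields the regularized version of the $a$-term, $2a\Im\int\bigl(g_\delta(|u|^2)\psi\bigr)_x u\bar u_x\,dx$. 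One then lets $\delta\to0$: since $|(g_\delta(|u|^2))_x|=g_\delta'(|u|^2)\,|(|u|^2)_x|\leq|u_x|$ uniformly in $\delta$ and $(g_\delta(|u|^2))_x\to|u|_x$ a.e., while $u$ is Schwartz in $x$, dominated convergence gives \eqref{2.25}, \eqref{2.29} for the smooth solution. I expect this regularization step --- together with the a.e.\ differentiability of $|u|$ and the bound $\bigl||u|_x\bigr|\leq|u_x|$, which is what Appendix~\ref{B} is devoted to --- to be the main obstacle.

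Finally one must show that the solution lies in $X(\Pi_T)$ with the full bound \eqref{2.23}. The remaining components of the $X(\Pi_T)$-norm --- $u_x\in L_6(0,T;C_b)$, $u_{xx}\in C_b(\mathbb R;L_2(0,T))$ and $u\in L_2(\mathbb R;C[0,T])$ --- are not consequences of the energy identities but of the linear dispersive estimates for the evolution generated by \eqref{2.1}, namely a Strichartz-type estimate, the one-derivative local smoothing estimate, and a maximal-function estimate, applied to the Duhamel representation \eqref{2.7} (with $f_2\equiv0$); all of these rest on the pointwise bounds for $G$ and its $x$-derivatives collected in Appendix~\ref{A}, and they also supply the bound \eqref{2.23} for the whole norm. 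Approximating $u_0$ in $H^1$ and $f$ in $L_1(0,T;H^1)$ by smooth functions, the corresponding smooth solutions converge in $X(\Pi_T)$ by these estimates, and the differential identities \eqref{2.24}--\eqref{2.29} pass to the limit, their left-hand sides being read as absolutely continuous functions of $t$ whose a.e.\ derivatives equal the right-hand sides, exactly as in the passage from \eqref{2.19} to \eqref{2.15} in Lemma~\ref{L2.3}. Uniqueness is immediate from Lemma~\ref{L2.2}, since any solution in $X(\Pi_T)$ is in particular a generalized solution.
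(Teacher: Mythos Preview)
Your proposal is correct and follows essentially the same scheme as the paper: establish the identities for smooth solutions, derive the a priori bounds, and close. Your shortcut of applying Lemma~\ref{L2.3} directly to $v=u_x$ to obtain \eqref{2.24}, \eqref{2.27}, \eqref{2.28} is a clean substitute for the paper's explicit computation (multiplication by $-2(\bar u_x\psi)_x$), and your $g_\delta$-regularization for \eqref{2.25}, \eqref{2.29} is in fact more careful than the paper, which simply multiplies by $2|u|\bar u\psi$ and manipulates the resulting integrals formally. One correction: the three remaining ingredients of the $X(\Pi_T)$-norm (the $L_6(0,T;C_b)$ Strichartz bound on $u_x$, the $C_b(\mathbb R;L_2(0,T))$ local smoothing on $u_{xx}$, and the $L_2(\mathbb R;C[0,T])$ maximal-function bound on $u$) are not derived in the paper from the pointwise estimates of Appendix~\ref{A}; they are quoted from \cite{CT} for the homogeneous problem and then extended to $f\not\equiv0$ by Duhamel, so you should cite that reference rather than Appendix~\ref{A}.
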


\begin{proof}
Similarly to \eqref{2.14}
$$
\|u_x\|_{C([0,T];L_2)} \leq \|u_0'\|_{L_2} + \|f_x\|_{L_1(0,T;L_2)}.
$$
Next, apply estimates from \cite{CT}: if $f\equiv 0$
\begin{align*}
\|u\|_{L_6(0,T; L_\infty)} &\leq c(T) \|u_0\|_{L_2},\\
\|u_x\|_{L_\infty(\mathbb R;L_2(0,T))} &\leq c(T)\|u_0\|_{L_2},\\
\|u\|_{L_2(\mathbb R;L_\infty(0,T))} &\leq c(T)\|u_0\|_{H^1}.
\end{align*}
Note that in the smooth case all $L_\infty$ can be changed to $C_b$ norms and then by closure the same can be done in the general case. After that  by the standard argument we consider the non-homogeneous case and obtain the result on existence in the space $X(\Pi_T)$ with estimate \eqref{2.23}.

In order to prove other properties as in the previous proof first consider smooth solutions. Let $\psi\in C^3_b$. Multiply \eqref{2.1} by $-2\bigl( \bar u_{x}\psi(x)\bigr)_x$, integrate over $\mathbb R$ and take the imaginary part, then
\begin{multline}\label{2.30}
-\int \Im 2i u_t \bigl(\bar u_{x}\psi\bigr)_x\,dx - a \int 2\Im u_{xx} \bigl(\bar u_{x}\psi\bigr)_x\, dx - b \int \Im 2i u_x \bigl(\bar u_{x}\psi\bigr)_x\, dx  \\ -\int \Im 2i u_{xxx}  \bigl(\bar u_{x}\psi\bigr)_x\, dx   = -2\Im \int f \bigl(\bar u_{x}\psi\bigr)_x\, dx. 
\end{multline}
Here
\begin{multline*}
-\int \Im 2i u_t \bigl(\bar u_{x}\psi\bigr)_x\,dx = \int \Re 2 u_{tx} \bar u_x\psi\, dx \\= 
\int (u_{tx} \bar u_x + u_x \bar u_{tx})\psi \,dx = \frac{d}{dt} \int |u_x|^2 \psi\,dx,
\end{multline*}
\begin{multline*}
-\int 2\Im u_{xx} \bigl(\bar u_{x}\psi\bigr)_x\,dx = -\int 2\Im |u_{xx}|^2 \psi\, dx - 2\Im \int u_{xx} \bar u_x \psi' \,dx  \\ =
- 2\Im \int u_{xx} \bar u_x \psi' \,dx,
\end{multline*}
\begin{multline*}
-\int \Im 2i u_{x}\bigl(\bar u_{x}\psi\bigr)_x\,dx = -\int 2\Re u_{x} \bar u_{xx}\psi \, dx  - 2\int |u_x|^2 \psi' \,dx  \\ = 
-\int \bigl(|u_x|^2\bigr)_x \psi \,dx - 2\int |u_x|^2 \psi' \,dx = - \int |u_x|^2 \psi' \,dx,
\end{multline*}
\begin{multline*}
-\int \Im 2i u_{xxx}\bigl(\bar u_{x}\psi\bigr)_x\,dx = -\int 2\Re u_{xxx} (\bar u_{xx}\psi + \bar u_x \psi')\, dx  \\ = 
-\int (u_{xxx}\bar u_{xx} + \bar u_{xxx} u_{xx})\psi \, dx  +2\int |u_{xx}|^2\psi'\, dx + 2\int \Re u_{xx} \bar u_x \psi'' \,dx \\ =
3 \int |u_{xx}|^2 \psi' \,dx  - \int |u_x|^2 \psi''' \,dx.
\end{multline*}
Choosing either $\psi(x)\equiv 1$ or $\psi\not\equiv 0$ non-negative and non-decreasing, we derive from \eqref{2.30} equalities \eqref{2.24} and \eqref{2.28} for smooth solutions. Then estimate \eqref{2.23} and equality \eqref{2.28} provide estimate \eqref{2.27}.

Next, multiply \eqref{2.1} by $2|u(t,x)|\bar u(t,x) \psi(x)$, integrate over $\mathbb R$ and take the imaginary part, then
\begin{multline}\label{2.31}
\int \Im 2iu_t |u|\bar u \psi\,dx + a \int 2\Im u_{xx} |u|\bar u\psi\, dx + b \int \Im 2i u_x |u|\bar u\psi\, dx \\ +\int \Im 2i u_{xxx} |u|\bar u\psi\, dx  = 2\Im \int f |u|\bar u\psi\, dx. 
\end{multline}
Here
\begin{multline*}
\int \Im 2i u_t |u|\bar u \psi \,dx = \int \Re 2 |u| u_t \bar u \psi \, dx  = \int |u| ( u_t \bar u + \bar u_{t} u )\psi \,dx \\ =\int \bigl(|u|^2\bigr)^{1/2} \bigl( |u|^2\bigr)_t \psi\,dx 
= \frac23 \int \Bigl(\bigl( |u|^2\bigr)^{3/2}\Bigr)_t \psi\,dx = \frac23 \frac{d}{dt} \int |u|^3\psi\,dx,
\end{multline*}
\begin{multline*}
\int 2\Im u_{xx} |u| \bar u \psi\,dx = -\int 2\Im \bigl(|u|\psi\bigr)_x \bar u u_x\,dx -\int 2\Im |u| u_x \bar u_x \psi \,dx \\=  
2\Im \int \bigl(|u|\psi\bigr)_x u \bar u_x \,dx,
\end{multline*}
\begin{multline*}
\int \Im 2i u_{x} |u| \bar u\psi\,dx = \int \Re |u| (u_x \bar u + \bar u_x u)\psi\,dx = \int (|u|^2)^{1/2} \bigl(|u|^2\bigr)_x\psi\,dx \\ = -\frac23 \int |u|^3 \psi'\, dx,
\end{multline*}
\begin{multline*}
\int \Im 2i u_{xxx} |u| \bar u \psi\,dx = \int \Re 2 u_{xxx} |u| \bar u \psi\, dx \\ = -\int \Re 2 \bigl(|u|\psi\bigr)_x \bar u u_{xx}\,dx  - \int \Re 2 |u| \bar u_x u_{xx}\psi\, dx  \\=
-\int  \bigl(|u|\psi\bigr)_x (\bar u u_{xx} + \bar u_{xx}  u)\, dx   -\int |u|\psi (\bar u_x u_{xx} +u_x \bar u_{xx})\, dx \\=
-\int \bigl(|u|\psi\bigr)_{x}  \bigl[\bigl(|u|^2\bigr)_{xx} -2|u_x|^2\bigr]\,dx  - \int |u|\psi \bigl(|u_x|^2\bigr)_x\,dx \\ =
3 \int \bigl(|u|\psi\bigr)_x |u_x|^2\,dx - \int \bigl(|u|\psi\bigr)_{x} \bigl(|u|^2\bigr)_{xx}\,dx,
\end{multline*}
and similarly to \eqref{2.30} equality \eqref{2.31} ensures equalities \eqref{2.25} and \eqref{2.29} for smooth solutions. 

Finally, multiply \eqref{2.1} by $2\bar u_x(t,x)$, integrate over $\mathbb R$ and take the real part, then
\begin{multline}\label{2.32}
\int \Re 2iu_t \bar u_{x}\,dx + a \int 2\Re u_{xx} \bar u_{x}\, dx + b \int \Re 2i u_x \bar u_{x}\, dx \\+\int \Re 2i u_{xxx} \bar u_{x}\, dx  = 2\Re \int f \bar u_{x}\, dx. 
\end{multline}
Here
\begin{multline*}
\int \Re 2i u_t \bar u_{x}\,dx = -\int \Im 2 u_t \bar u_{x}\, dx  = i\int (u_{t} \bar u_x - u_x \bar u_{t})\,dx \\ = i\int (u_t \bar u_x + u \bar u_{tx})\,dx= i\frac{d}{dt} \int u \bar u_x\,dx,
\end{multline*}
$$
\int 2\Re u_{xx} \bar u_{x}\,dx = \int (u_{xx}\bar u_x + \bar u_{xx} u_x)\,dx =  0,
$$
$$
\int \Re 2i u_{x}\bar u_{x}\,dx = 0,
$$
$$
\int \Re 2i u_{xxx}\bar u_{x}\,dx = i\int (u_{xxx} \bar u_{x} - \bar u_{xxx} u_x)\, dx = 0,
$$
and, therefore, \eqref{2.32} ensures equality \eqref{2.26} for smooth solutions.

The general case is obtained via closure on the basis of estimates \eqref{2.23} and \eqref{2.27}.
\end{proof}

\section{Existence}\label{S3}

Alongside with equation \eqref{1.1} consider an equation with the regularized nonlinearity
\begin{equation}\label{3.1}
i u_t +a u_{xx} + i b u_x +i u_{xxx} +\lambda g_\delta(|u|^2) u +i\beta \bigl(g_\delta(|u|^2) u\bigr)_x + i d(x)u=0,
\end{equation}
where $\delta \in (0,1]$ and the function $g_\delta$ is defined in \eqref{1.7}. The notion of a weak solution is similar to Definition \ref{D1.1}.

Note that $g_\delta \in C^\infty(\mathbb R_+)$ ($C^\infty(\overline{\mathbb R}_+)$ if $\delta>0$) and for $\theta>0$ uniformly with respect to $\delta\in [0,1]$
\begin{equation}\label{3.2}
|g_\delta(\theta)| \leq \bigl(\theta^{1/2} +1\bigr), \quad 
|g_\delta'(\theta)| \leq \frac12\theta^{-1/2}, \quad 
\quad |g''_\delta(\theta)|  \leq  \frac14\theta^{-3/2}.
\end{equation}

\begin{lemma}\label{L3.1}
Let $d\in H^1$, $T>0$, $u \in X(\Pi_T)$, $\delta \in [0,1]$, define
\begin{equation}\label{3.3}
F(u(t,x)) \equiv -\lambda g_\delta (|u|^2)u -i \beta\bigl(g_\delta(|u|^2) u\bigr)_x -i d(x) u.
\end{equation}
Then $F\in L_2(0,T;H^1)$ and there exists a positive constant $c=c(T,\|d\|_{H^1}, |\lambda|, |\beta|)$ (non-decreasing with respect to its arguments) such that uniformly with respect to $\delta$
\begin{equation}\label{3.4}
\|F\|_{L_2(0,T;H^1)} \leq c\|u\|_{X(\Pi_T)} \bigl( 1 + \|u\|_{X(\Pi_T)}\bigr).
\end{equation}
\end{lemma}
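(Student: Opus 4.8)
The plan is to obtain, for a.e.\ $t\in(0,T)$, pointwise bounds of $|F|$ and $|F_x|$ by sums of products of $|u|,|u_x|,|u_{xx}|$ (and of $d,d'$), and then to estimate each such product in $L_2(\Pi_T)$ against the four norms of Definition~\ref{D1.3} together with the one-dimensional embedding $H^1\hookrightarrow C_b$; in particular $d\in C_b$ with $\|d\|_{C_b}\le c\|d\|_{H^1}$ and $\|u\|_{C([0,T];C_b)}\le c\|u\|_{X(\Pi_T)}$. The damping term $-id(x)u$ is immediate: $(du)_x=d'u+du_x$, so $\|du(t,\cdot)\|_{H^1}\le c\|d\|_{H^1}\bigl(\|u(t,\cdot)\|_{C_b}+\|u_x(t,\cdot)\|_{L_2}\bigr)\le c\|d\|_{H^1}\|u(t,\cdot)\|_{H^1}$, which after integration in $t$ contributes at most $c(T)\|d\|_{H^1}\|u\|_{X(\Pi_T)}$ to $\|F\|_{L_2(0,T;H^1)}$.

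For the nonlinear terms the core is the choice of pointwise bounds. The differentiation rules for $|u|$ (hence for $g_\delta(|u|^2)u$) established in Appendix~\ref{B} ensure that $g_\delta(|u|^2)u\in C([0,T];H^1)$, so that the derivatives below exist in $L_2$; moreover, writing $\bigl[g_\delta(|u|^2)\bigr]_x=g_\delta'(|u|^2)(|u|^2)_x$ with $(|u|^2)_x=2\Re(\bar uu_x)$, $(|u|^2)_{xx}=2|u_x|^2+2\Re(\bar uu_{xx})$, and invoking \eqref{3.2} in the form $|g_\delta(|u|^2)|\le|u|+1$, $|g_\delta'(|u|^2)|\le\frac12|u|^{-1}$, $|g_\delta''(|u|^2)|\le\frac14|u|^{-3}$, one checks that every occurrence of the singular factors $|u|^{-1},|u|^{-3}$ is compensated by the powers of $|u|$ carried by $(|u|^2)_x$, $(|u|^2)_{xx}$ and by the trailing factor $u$, so that all singularities cancel. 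This produces
\[
\bigl|g_\delta(|u|^2)u\bigr|\le|u|^2+|u|,\qquad \bigl|(g_\delta(|u|^2)u)_x\bigr|\le 2|u|\,|u_x|+|u_x|,\qquad \bigl|(g_\delta(|u|^2)u)_{xx}\bigr|\le(2|u|+1)|u_{xx}|+4|u_x|^2,
\]
and hence, with $c$ depending only on $|\lambda|,|\beta|$ and $\|d\|_{C_b}$,
\[
|F|\le c\bigl(|u|^2+|u|+|u|\,|u_x|+|u_x|\bigr),\qquad |F_x|\le c\bigl(|u|\,|u_x|+|u_x|+|u_x|^2+|u|\,|u_{xx}|+|u_{xx}|+|d'|\,|u|\bigr).
\]

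It remains to bound the $L_2(\Pi_T)$-norms of these products, and here each of the four norms of Definition~\ref{D1.3} is used. The algebraic products ($|u|,|u|^2,|u|\,|u_x|,|u_x|$) and the $d$-products are routine via $u\in C([0,T];H^1)\hookrightarrow C([0,T];L_p)$, $2\le p\le\infty$: e.g.\ $\||u|^2\|_{L_2(\Pi_T)}\le c(T)\|u\|_{X(\Pi_T)}^2$, $\||u|\,|u_x|\|_{L_2(\Pi_T)}\le\|u\|_{C([0,T];C_b)}\|u_x\|_{L_2(\Pi_T)}\le c(T)\|u\|_{X(\Pi_T)}^2$, and $\|d'u\|_{L_2(\Pi_T)}\le c(T)\|d\|_{H^1}\|u\|_{X(\Pi_T)}$. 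The quadratic derivative term combines $u_x\in L_6(0,T;C_b)$ with $u_x\in L_\infty(0,T;L_2)$ (the latter from $u\in C([0,T];H^1)$):
\[
\||u_x|^2\|_{L_2(\Pi_T)}^2=\int_0^T\|u_x(t,\cdot)\|_{L_4}^4\,dt\le\sup_{t\in[0,T]}\|u_x(t,\cdot)\|_{L_2}^2\int_0^T\|u_x(t,\cdot)\|_{C_b}^2\,dt\le c(T)\|u\|_{X(\Pi_T)}^4 .
\]
The mixed term $|u|\,|u_{xx}|$ is precisely what the pairing of $u\in L_2(\mathbb R;C[0,T])$ with $u_{xx}\in C_b(\mathbb R;L_2(0,T))$ is made for:
\[
\int_0^T\!\!\int|u|^2|u_{xx}|^2\,dx\,dt=\int\Bigl(\int_0^T|u(t,x)|^2|u_{xx}(t,x)|^2\,dt\Bigr)dx\le\|u_{xx}\|_{C_b(\mathbb R;L_2(0,T))}^2\,\|u\|_{L_2(\mathbb R;C[0,T])}^2\le\|u\|_{X(\Pi_T)}^4,
\]
while the leftover bare term $|u_{xx}|$ is bounded in $L_2(\Pi_T)$ by the second-order part of $\|u\|_{X(\Pi_T)}$. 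Summing all contributions, and noting that the constants obtained are non-decreasing in $T,\|d\|_{H^1},|\lambda|,|\beta|$, gives \eqref{3.4}. The steps I expect to require the most care are the singularity cancellations in the derivatives of $g_\delta(|u|^2)u$ — which must be tracked through the exact algebraic structure, and justified via Appendix~\ref{B}, rather than by crude differentiation — and the correct matching of the second-order products, above all $|u|\,|u_{xx}|$, to the somewhat unusual norms defining $X(\Pi_T)$.
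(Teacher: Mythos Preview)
Your strategy is essentially the paper's: the same pointwise bounds from \eqref{3.2} and \eqref{B.7}, and the same pairing of the four norms in $X(\Pi_T)$ to control the two critical products $|u|\,|u_{xx}|$ and $|u_x|^2$ --- these are exactly the paper's estimates \eqref{3.8} and \eqref{3.9}.

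There is, however, one false step. You assert that ``the leftover bare term $|u_{xx}|$ is bounded in $L_2(\Pi_T)$ by the second-order part of $\|u\|_{X(\Pi_T)}$'', but the norm $\|u_{xx}\|_{C_b(\mathbb R;L_2(0,T))}$ does \emph{not} control $\|u_{xx}\|_{L_2(\Pi_T)}$. Take a time-independent $u(t,x)=\phi(x)$ with $\phi=\sum_{n\ge2}n^{-2}\psi(n(x-n))$ for a fixed smooth compactly supported bump $\psi$: then $\phi\in H^1$, $\phi'\in C_b$, $\phi''\in L_\infty$ --- so all four conditions of Definition~\ref{D1.3} hold --- yet $\|\phi''\|_{L_2}^2\sim\sum_n n^{-1}=\infty$. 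The bare $|u_{xx}|$ arises only for $\delta>0$, from $g_\delta(|u|^2)u_{xx}$ via the crude bound $g_\delta(|u|^2)\le|u|+1$; when $\delta=0$ one has $g_0(|u|^2)=|u|$ and only the product $|u|\,|u_{xx}|$ appears, which you and the paper both handle correctly. The paper itself passes over this point (it relegates the term to ``other terms are estimated in a similar and more obvious way''), so the gap is inherited from the original rather than introduced by you --- but your explicit claim is incorrect as stated.
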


\begin{proof}
From \eqref{3.2} it follows that for $\delta\in [0,1]$
\begin{equation}\label{3.5}
|g_\delta(|u|^2)u|,  |g_\delta(|u|^2) u_x|, |g_\delta(|u|^2) u_{xx}| \leq (1 +|u|) (|u| + |u_x| + |u_{xx}|).
\end{equation}
Next, from \eqref{B.7} it follows that for $\delta \in [0,1]$
\begin{gather}\label{3.6}
\bigl| \bigl(g_\delta(|u|^2)\bigr)_x u\bigr| \leq c |u u_x|,\\
\label{3.7}
\bigl|\bigl(g_\delta(|u|^2)\bigr)_x u_x\bigl|, \bigl(\bigl(g_\delta(|u|^2)\bigr)_x u \bigr)_x\bigr| 
\leq c \bigl(|u_x|^2 + |u u_{xx}|\bigr).
\end{gather}
Here
\begin{equation}\label{3.8}
\iint_{\Pi_T} |u|^{2} |u_{xx}|^2 \,dxdt \leq \int \sup\limits_{t\in (0,T)} |u|^2\,dx \cdot \sup\limits_{x\in \mathbb R} \int_0^T |u_{xx}|^2\,dt  \leq c\|u\|^{4}_{X(\Pi_T)},
\end{equation}
\begin{equation}\label{3.9}
\iint_{\Pi_T} |u_{x}|^4 \,dxdt \leq \sup\limits_{t\in (0,T)} \int |u_x|^2\,dx \cdot \int_0^T \sup\limits_{x\in \mathbb R} |u_{x}|^2\,dt  \leq c(T)\|u\|^{4}_{X(\Pi_T)}.
\end{equation}
Other terms are estimated in a similar and more obvious way.
\end{proof}

\begin{lemma}\label{L3.2}
Let $u_0\in H^1$, $d\in H^1$, $\delta\in (0,1]$. Then there exists $T_0>0$, depending on $\|u_0\|_{H^1}$ and uniform with respect to $\delta$, such that problem \eqref{3.1}, \eqref{1.2} has a unique solution $u_\delta \in X(\Pi_{T_0})$, moreover, uniformly with respect to $\delta$ for certain $R>0$, depending on $\|u_0\|_{H^1}$,
\begin{equation}\label{3.10}
\|u_\delta\|_{X(\Pi_{T_0})} \leq R.
\end{equation}
\end{lemma}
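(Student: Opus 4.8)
The plan is to prove Lemma~\ref{L3.2} by a standard contraction mapping argument in the space $X(\Pi_{T_0})$, using the linear theory of Lemma~\ref{L2.6} together with the nonlinear estimate of Lemma~\ref{L3.1}. Given $v\in X(\Pi_T)$, let $\Lambda(v)$ be the solution $u$ to the linear problem \eqref{2.1}, \eqref{2.2} with $f\equiv F(v)$, where $F$ is defined in \eqref{3.3} (with $\delta$ fixed in $(0,1]$). By Lemma~\ref{L3.1}, $F(v)\in L_2(0,T;H^1)\subset L_1(0,T;H^1)$, so Lemma~\ref{L2.6} guarantees $\Lambda(v)\in X(\Pi_T)$ is well-defined, and
$$
\|\Lambda(v)\|_{X(\Pi_T)} \leq c(T)\bigl(\|u_0\|_{H^1} + \|F(v)\|_{L_1(0,T;H^1)}\bigr) \leq c(T)\bigl(\|u_0\|_{H^1} + T^{1/2}\|F(v)\|_{L_2(0,T;H^1)}\bigr).
$$
Combining with \eqref{3.4}, for $R$ to be chosen we get, on the ball $B_R = \{v: \|v\|_{X(\Pi_T)}\leq R\}$,
$$
\|\Lambda(v)\|_{X(\Pi_T)} \leq c(T)\|u_0\|_{H^1} + c(T) T^{1/2} c\, R(1+R).
$$
Choosing $R = 2c(T)\|u_0\|_{H^1}$ (for $T\leq 1$, say, so that $c(T)$ is bounded by $c(1)$, making $R$ depend only on $\|u_0\|_{H^1}$) and then $T_0\leq 1$ small enough that $c(1)T_0^{1/2} c(1+R)\leq 1/2$, we obtain $\Lambda(B_R)\subset B_R$ in $X(\Pi_{T_0})$. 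Note $T_0$ depends only on $\|u_0\|_{H^1}$ and is uniform in $\delta$, since the constant $c$ in \eqref{3.4} is uniform in $\delta\in[0,1]$.

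For the contraction property I would estimate $\Lambda(v_1)-\Lambda(v_2)$, which by linearity solves \eqref{2.1}, \eqref{2.2} with zero initial data and right-hand side $F(v_1)-F(v_2)$. Hence by Lemma~\ref{L2.6},
$$
\|\Lambda(v_1)-\Lambda(v_2)\|_{X(\Pi_{T_0})} \leq c(T_0) T_0^{1/2}\|F(v_1)-F(v_2)\|_{L_2(0,T_0;H^1)}.
$$
The key estimate, proved in the same spirit as Lemma~\ref{L3.1} using the bounds \eqref{3.2} on $g_\delta, g_\delta', g_\delta''$ and the Leibniz-type bounds from Appendix~\ref{B}, is a Lipschitz bound of the form
$$
\|F(v_1)-F(v_2)\|_{L_2(0,T_0;H^1)} \leq c\bigl(1 + \|v_1\|_{X(\Pi_{T_0})} + \|v_2\|_{X(\Pi_{T_0})}\bigr)\|v_1-v_2\|_{X(\Pi_{T_0})},
$$
for $v_1,v_2\in B_R$. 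Here the condition $\delta>0$ matters: the map $\theta\mapsto g_\delta(\theta)=(\theta+\delta)^{1/2}$ is smooth on $\overline{\mathbb R}_+$ with $g_\delta'$ bounded by $\frac12\delta^{-1/2}$, so differences of the composed nonlinearity can be controlled — though one must be careful to use the $\delta$-uniform bounds \eqref{3.2} valid for $\theta>0$ wherever possible, and the $\delta$-dependent bound only to handle the set where $|v_1|$ or $|v_2|$ is small; the factor $T_0^{1/2}$ absorbs any such constant after possibly shrinking $T_0$ further (but still depending only on $\|u_0\|_{H^1}$ and $\delta$; since we want $T_0$ uniform in $\delta$, the cleanest route is to carry out the difference estimate exactly as in Lemma~\ref{L3.1}, where the Leibniz bounds \eqref{3.6}--\eqref{3.7} and \eqref{3.5} hold $\delta$-uniformly). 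Shrinking $T_0$ so that $c(T_0)T_0^{1/2} c(1+2R)\leq 1/2$ makes $\Lambda$ a contraction on $B_R\subset X(\Pi_{T_0})$.

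The Banach fixed point theorem then yields a unique fixed point $u_\delta\in B_R\subset X(\Pi_{T_0})$, which is the desired solution to \eqref{3.1}, \eqref{1.2}, with \eqref{3.10} holding for $R$ depending only on $\|u_0\|_{H^1}$. Uniqueness in all of $X(\Pi_{T_0})$ (not merely in $B_R$) follows by the standard argument: any two solutions lie in some larger ball and on a possibly shorter time interval the contraction estimate forces them to coincide, then one bootstraps along the interval; alternatively one invokes the uniqueness part of Lemma~\ref{L2.6} for the linear problem together with the local Lipschitz bound. The main obstacle, and the one point requiring genuine care rather than routine bookkeeping, is establishing the Lipschitz estimate on $F(v_1)-F(v_2)$ in $L_2(0,T_0;H^1)$ with constants that are uniform in $\delta\in(0,1]$ — this is where the non-smoothness of $|u|$ (regularized by $\delta$) interacts with the derivative-loss structure of the quasilinear term $\beta(g_\delta(|u|^2)u)_x$, and it must be handled using the product estimates of Appendix~\ref{B} in the norm of $X(\Pi_T)$, exactly mirroring the proof of Lemma~\ref{L3.1} but for differences.
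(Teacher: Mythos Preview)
Your self-map argument is correct and matches the paper: Lemma~\ref{L3.1} plus Lemma~\ref{L2.6} give $\Lambda(B_R)\subset B_R$ on an interval $T_0$ depending only on $\|u_0\|_{H^1}$, uniformly in $\delta$. The gap is in the contraction step. The bounds \eqref{3.5}--\eqref{3.7} from Lemma~\ref{L3.1} (equivalently \eqref{B.7}) are pointwise estimates on $F(v)$ and its $x$-derivative for a \emph{single} $v$, uniform in $\delta$; they are not Lipschitz bounds on the map $v\mapsto F(v)$. Estimating $\bigl(F(v)-F(\widetilde v)\bigr)_x$ requires controlling differences such as $(g_\delta(|v|^2))_x v_x - (g_\delta(|\widetilde v|^2))_x \widetilde v_x$ and $\bigl((g_\delta(|v|^2))_x v\bigr)_x - \bigl((g_\delta(|\widetilde v|^2))_x \widetilde v\bigr)_x$, and this forces you through differences of $g_\delta'$ (hence $g_\delta''$) at arguments that may be near zero; there the only available bounds are $|g_\delta'|\leq\tfrac12\delta^{-1/2}$ and $|g_\delta''|\leq\tfrac14\delta^{-3/2}$. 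Concretely, the map $z\mapsto z/(|z|^2+\delta)^{1/2}$ has Lipschitz constant of order $\delta^{-1/2}$ near $z=0$, so a $\delta$-uniform Lipschitz bound in $H^1$ is impossible. The paper makes this explicit: its difference estimate \eqref{3.15} carries a factor $(2R)^3/\delta$, and the contraction holds only on a time $T(\delta)\leq T_0$ that shrinks as $\delta\to 0$.

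What you are missing is the extension step that bridges $T(\delta)$ and $T_0$. Having built the fixed point on $[0,T(\delta)]$ with $\|u\|_{X}\leq R$, the paper glues any candidate $v$ on $[T,T+t_0]$ (with $t_0=\min(T_0-T,T(\delta))$) to $u$ on $[0,T]$, obtaining $V\in X(\Pi_{T+t_0})$ with $\|V\|_X\leq 2R$; the $\delta$-uniform self-map estimate (with $T_0$ chosen from the start for $2R$ rather than $R$, cf.\ \eqref{3.14}) then forces $\|\Lambda_1 V\|_{X(\Pi_{T+t_0})}\leq R$, while the $\delta$-dependent contraction on the short piece $[T,T+t_0]$ produces the extension. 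Iterating in finitely many steps covers $[0,T_0]$ and yields $u_\delta\in X(\Pi_{T_0})$ with $\|u_\delta\|_{X(\Pi_{T_0})}\leq R$ uniformly in $\delta$. Without this iteration your argument delivers a solution only on a $\delta$-dependent interval, which is not what the lemma claims.
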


\begin{proof}
Without loss of generality we assume that $T_0 \leq 1$. Let $\|u_0\|_{H^1} \leq M$.

For any $\delta \in (0,1]$ the functions $g_\delta$ verify inequalities \eqref{3.2} uniformly with respect to $\delta$. Therefore, for any $T\in (0,1]$ and $u\in X(\Pi_T)$ estimate \eqref{3.4} is verified uniformly with respect to $T$ and $\delta$. 

Let $R>0$. Define on a set $\widetilde X_R(\Pi_{T}) = \{v\in X(\Pi_{T}): \|v\|_{X(\Pi_{T})} \leq R, v\big|_{t=0} = u_0\}$ a map $u= \Lambda_0 v$, where $u\in X(\Pi_{T})$ is a solution to a linear problem
\begin{equation}\label{3.11}
i u_t +a u_{xx} +ib u_x +i u_{xxx} =F(v(t,x)),\quad u\big|_{t=0} = u_0,
\end{equation}
where the function $F$ is given by formula \eqref{3.3} and the function $u$ is substituted by a function $v\in \widetilde X_R(\Pi_T)$. 

By virtue of Lemma~\ref{L3.1} $F(v)\in L_2(0,T;H^1)$ and uniformly with respect to $\delta$
\begin{equation}\label{3.12}
\|F(v)\|_{L_2(0,T;H^1)} \leq c R (1+R).
\end{equation}
Then by virtue of Lemma~\ref{L2.6} the map $\Lambda_0$ exists and according to \eqref{2.23}
\begin{equation}\label{3.13}
\|u\|_{X(\Pi_{T})} \leq c_0 M + c_0 T^{1/2} R (1+R),
\end{equation}
for certain constant $c_0$ uniform with respect to $\delta$.
Choose
\begin{equation}\label{3.14}
R \geq 2c_0 M,\quad T_0 = \min\left(1, \frac{1}{16c_0^2 (1+2R)^2}\right).
\end{equation}
Then $\Lambda_0$ maps $\widetilde X_R(\Pi_T)$ into itself  for any $T\leq T_0$.

Next we show that for any $\delta\in (0,1]$ there exists $T(\delta)\in (0,T_0]$ such that for any $T\in (0,T(\delta)]$ on the set $\widetilde X_R(\Pi_{T})$ the map $\Lambda_0$ is a contraction. To this end one has to estimate $F(v) -F(\widetilde v)$ and $\bigl(F(v) -F(\widetilde v)\bigr)_x$. In particular, consider $\bigl(g_\delta(|v|^2)v\bigr)_{xx} - \bigl(g_\delta(|\widetilde v|^2)\widetilde v\bigr)_{xx}$. Note that
$$
\bigl(g_\delta(|v|^2)v\bigr)_{xx} = g_\delta(|v|^2) v_{xx} + \bigl(g_\delta(|v|^2)\bigr)_x v_x  + \bigl((g_\delta(|v|^2))_x v\bigr)_x.
$$
Since
$$
\bigl| g_\delta(|v|^2) - g_\delta(|\widetilde v|^2) \bigr| \leq |v -\widetilde v|
$$
it follows similarly to \eqref{3.5} that
$$
\bigl| g_\delta(|v|^2) v_{xx} - g_\delta(|\widetilde v|^2) \widetilde v_{xx}\bigr| \leq (1+|v|)|\cdot |v_{xx} - \widetilde v_{xx}| + |\widetilde v_{xx}|\cdot |v -\widetilde v|. 
$$
In comparison with \eqref{3.6} and \eqref{3.7} other terms are not estimated uniformly with respect to $\delta$. Since 
$$
\Bigl| \frac1{(|v|^2+\delta)^{1/2}} - \frac1{(|\widetilde v|^2+\delta)^{1/2}} \Bigr| \leq \frac{|v- \widetilde v|}{(|v|^2+\delta)^{1/2}(|\widetilde v|^2+\delta)^{1/2}}
$$
we derive with the use of \eqref{B.5} that
$$
\bigl| \bigl(g_\delta(|v|^2)\bigr)_x v_x - \bigl(g_\delta(|\widetilde v|^2)\bigr)_x \widetilde v_x\bigr| \leq
c\Bigl[\bigl(|v_x| +|\widetilde v_x|\bigr) |v_x - \widetilde v_x| + \frac1{\delta^{1/2}} \bigl(|v_x|^2 + |\widetilde v_x|^2\bigr) |v -\widetilde v|\Bigr].
$$
Next, since
$$
\Bigl| \frac1{(|v|^2+\delta)^{3/2}} - \frac1{(|\widetilde v|^2+\delta)^{3/2}} \Bigr| \leq
\frac{3|v- \widetilde v| \bigl(|v|^2 +\delta + |\widetilde v|^2 +\delta\bigr)}{2(|v|^2+\delta)^{3/2}(|\widetilde v|^2+\delta)^{3/2}}
$$
we derive with the use of \eqref{B.6} that
\begin{multline*}
\bigl| \bigl((g_\delta(|v|^2))_x v\bigr)_x - \bigl((g_\delta(|\widetilde v|^2))_x \widetilde v\bigr)_x \bigr| \leq 
c\Bigl[\bigl(|v| +|\widetilde v|\bigr) |v_{xx} - \widetilde v_{xx}| +\bigl(|v_x| +|\widetilde v_x|\bigr) |v_x - \widetilde v_x|  \\
+ \bigl(|v_{xx}| +|\widetilde v_{xx}|\bigr) |v - \widetilde v| 
+ \frac1{\delta^{1/2}} \bigl(|v_x| +|\widetilde v_x|\bigr)\bigl(|v| +|\widetilde v|\bigr) |v_x - \widetilde v_x|
+ \frac1{\delta^{1/2}} \bigl(|v_x|^2 + |\widetilde v_x|^2\bigr) |v -\widetilde v| \\
+ \frac1{\delta} \bigl(|v_x|^2 +|\widetilde v_x|^2\bigr)\bigl(|v| +|\widetilde v|\bigr) |v -\widetilde v| \Bigr].
\end{multline*}
Other terms are estimated in a similar and more obvious way. Then similarly to \eqref{3.8}, \eqref{3.9} and with the use of the embedding $C([0,T];H^1) \subset C_b(\overline{\Pi}_T)$ it follows that for $v, \widetilde v \in \widetilde X_R(\Pi_{T})$
\begin{equation}\label{3.15}
\|F(v) - F(\widetilde v)\|_{L_2(0,T; H^1)} \leq c \Bigl[(1+2R) +\frac{(2R)^3}{\delta}\Bigr] \|v -\widetilde v\|_{\widetilde X_R(\Pi_{T})},
\end{equation}
whence similarly to \eqref{3.13} follows that
\begin{equation}\label{3.16} 
\|u - \widetilde u\|_{X(\Pi_{T})} \leq c_1 T^{1/2} \Bigl[(1+2R) +\frac{(2R)^3}{\delta}\Bigr]
\|v - \widetilde v\|_{X(\Pi_{T})}.
\end{equation}
where the constant $c_1$ does not depend on $\delta$. Then inequality \eqref{3.16} ensures that for
$T(\delta) = \min\bigl[T_0,\bigl(2c_1 (1+2R +(2R)^3/\delta)\bigr)^{-2}\bigr]$
\begin{equation}\label{3.17}
\|\Lambda v - \Lambda\widetilde v\|_{X(\Pi_{T})} \leq \frac12 \|v - \widetilde v\|_{X_{(\Pi_T})}.
\end{equation}.

Next, assume that for certain $T<T_0$ there constructed a unique solution $u\in \widetilde X_R(\Pi_T)$ to problem \eqref{3.1}, \eqref{1.2}. Choose
$$
t_0 = \min(T_0-T, T(\delta)),\quad T_1 =T+t_0.
$$
Let $\Pi_{T,T_1} = (T,T_1)\times \mathbb R$, define the space $X(\Pi_{T,T_1})$ similarly to $X(\Pi_T)$ ($0$ is substituted by $T$, $T$ --- by $T_1$) and let $\widetilde X_R(\Pi_{T,T_1}) = \{v\in X(\Pi_{T,T_1}): \|v\|_{X(\Pi_{T,T_1})} \leq R, v\big|_{t=T} = u\big|_{t=T}\}$. For any $v\in \widetilde X_R(\Pi_{T,T_1})$ let
$$
V(t,x) \equiv \begin{cases} u(t,x),\quad & t\in [0,T],\\ v(t,x),\quad & t\in (T,T_1]. \end{cases}
$$
Note that $V\in \widetilde X_{2R}(\Pi_{T_1})$. On the strip $\Pi_{T_1}$ consider a map $\Lambda_1$, where $U = \Lambda_1 V \in X(\Pi_{T_1})$ is a solution to problem \eqref{3.11}, where $v$ is substituted by $V$. By virtue of Lemmas~\ref{L2.6} and~\ref{L3.1} such a map exists and
$$
\|U\|_{X(\Pi_{T_1})} \leq c_0 M + c_0 T_0^{1/2} 2R (1+2 R) \leq R.
$$
Note that $U(t,x) = u(t,x)$ for $t\leq T$ because of uniqueness. Define a map $\Lambda$ on the set $\widetilde X_R(\Pi_{T,T_1})$ in the following way: let $\Lambda v$ be the restriction of $\Lambda_1 V$ on the strip $\Pi_{T,T_1}$. Note that $\Lambda v \in \widetilde X_R(\Pi_{T,T_1})$. Moreover, similarly to \eqref{3.15}--\eqref{3.17}
\begin{multline*}
\|\Lambda v - \Lambda\widetilde v\|_{X(\Pi_{T,T_1})} \leq c_1 t_0^{1/2} \Bigl[(1+2R) +\frac{(2R)^3}{\delta}\Bigr]
\|v - \widetilde v\|_{X(\Pi_{T,T_1})} \\ \leq \frac12 \|v - \widetilde v\|_{X(\Pi_{T,T_1})}.
\end{multline*}
As a result, the map $\Lambda$ is a contraction. Let $v_0$ be the unique fixed point of the map $\Lambda$. Extend the function $u$ to the strip $\Pi_{T_1}$ by the function $v_0$. Then this extended function $u$ is the fixed point of the map $\Lambda_1$, in particular, $u\in \widetilde X_R(\Pi_{T_1})$.

Moving step by step we obtain the solution $u= u_\delta\in \widetilde X_R(\Pi_{T_0})$ to problem \eqref{3.1}, \eqref{1.2}.
\end{proof}

\begin{theorem}\label{T3.1}
Let $u_0\in H^1$, $d\in H^1$. Then there exist $T_0>0$, depending on $\|u_0\|_{H^1}$, and a solution $u\in X(\Pi_{T_0})$ to problem \eqref{1.1}, \eqref{1.2}.
\end{theorem}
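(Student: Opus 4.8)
The plan is to obtain the required solution as a limit, along a subsequence $\delta\to 0$, of the solutions $u_\delta\in X(\Pi_{T_0})$ to the regularized problems \eqref{3.1}, \eqref{1.2} produced by Lemma~\ref{L3.2}; recall that both $T_0$ and the bound $\|u_\delta\|_{X(\Pi_{T_0})}\leq R$ depend only on $\|u_0\|_{H^1}$ and are uniform in $\delta$. From equation \eqref{3.1}, the uniform bound of the nonlinear term supplied by Lemma~\ref{L3.1}, and $d\in H^1\subset L_\infty$, it follows that $\partial_t u_\delta$ is bounded in $L_2(0,T_0;H^{-2})$, again uniformly in $\delta$.

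First I would pass to compactness. Since $\{u_\delta\}$ is bounded in $C([0,T_0];H^1)$ and $\{\partial_t u_\delta\}$ in $L_2(0,T_0;H^{-2})$, the Aubin--Lions lemma provides a subsequence (not relabelled) converging in $C([0,T_0];H^s(-N,N))$ for every $N>0$ and every $s<1$; fixing $s\in(1/2,1)$ and diagonalizing in $N$, we get $u_\delta\to u$ uniformly on compact subsets of $\overline{\Pi}_{T_0}$, hence a.e.\ in $\Pi_{T_0}$, and also $u_\delta\rightharpoonup u$ weakly-$\ast$ in $L_\infty(0,T_0;H^1)$. By Fatou's lemma together with this local uniform convergence, all the seminorms defining the space $X(\Pi_{T_0})$ are finite for $u$ (indeed bounded by $R$).

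Next I would verify that $u$ is a weak solution of \eqref{1.1}, \eqref{1.2}. Writing equality \eqref{1.3} for $u_\delta$, with $|u_\delta|u_\delta$ replaced by the regularized nonlinearity $g_\delta(|u_\delta|^2)u_\delta$, and letting $\delta\to 0$: the linear terms pass by the weak-$\ast$ convergence; in the terms $\lambda g_\delta(|u_\delta|^2)u_\delta\phi$ and $i\beta g_\delta(|u_\delta|^2)u_\delta\phi_x$ one uses that $g_\delta(\theta)\to\theta^{1/2}$ uniformly on $\mathbb R_+$ (so $g_\delta(|u_\delta|^2)u_\delta\to|u|u$ a.e.), dominated convergence on each bounded strip $\{|x|\leq N\}$, and, on the complement, the $L_2$-decay at spatial infinity of $\phi$ and $\phi_x$ (which lie in $C([0,T_0];H^3)$) combined with the uniform bound of $u_\delta$ in $C([0,T_0];L_2)$; the damping term passes by weak convergence since $d\in L_2$. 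Hence $u$ satisfies \eqref{1.3}, i.e.\ $u$ is a weak solution in the sense of Definition~\ref{D1.1}. Observe that the non-smoothness of $|u|$ is harmless here, since in \eqref{1.3} no derivative falls on $u$ in the nonlinear term.

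It remains to upgrade $u$ to an element of $X(\Pi_{T_0})$. Because the $X(\Pi_{T_0})$-seminorms of $u$ are finite, the estimates from the proof of Lemma~\ref{L3.1} with $\delta=0$ (which invoke only those seminorms, the norm $\|d\|_{H^1}$, and the rules of Appendix~\ref{B} for differentiating $|u|$) give $F(u)\equiv-\lambda|u|u-i\beta(|u|u)_x-id(x)u\in L_2(0,T_0;H^1)\subset L_1(0,T_0;H^1)$. Transferring the nonlinear and damping terms of \eqref{1.3} to the right-hand side, $u$ is a weak solution of the linear problem \eqref{2.1}, \eqref{2.2} with data $u_0\in H^1$ and forcing $f=F(u)\in L_1(0,T_0;H^1)$; by Lemma~\ref{L2.6} this problem has a solution in $X(\Pi_{T_0})$, and by Remark~\ref{R2.1} together with the uniqueness of generalized solutions (Lemma~\ref{L2.2}) that solution coincides with $u$. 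Therefore $u\in X(\Pi_{T_0})$ and solves \eqref{1.1}, \eqref{1.2}. I expect the main difficulty to lie precisely in this interplay between the lack of smoothness of the nonlinearity and the limit passage; it is managed by keeping the two tasks of the compactness argument apart --- identifying the weak limit of the nonlinear term in the weak formulation, where no derivative of $|u|$ occurs, and recovering the $X$-regularity of the limit afterwards from the linear theory of Section~\ref{S2}, the only step that uses the differentiation of $|u|$ being the bound $F(u)\in L_1(0,T_0;H^1)$.
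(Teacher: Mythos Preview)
Your proof is correct and follows essentially the same route as the paper: extract a limit of the regularized solutions $u_\delta$ from Lemma~\ref{L3.2} by compactness, pass to the limit in the weak formulation, and then upgrade to $X(\Pi_{T_0})$ via Lemma~\ref{L3.1} (with $\delta=0$), Lemma~\ref{L2.6} and the uniqueness of Lemma~\ref{L2.2}. The only differences are cosmetic---the paper localises to bounded intervals $I_n$, bounds $\partial_t u_\delta$ in $L_2(0,T;H^{-1}(I_n))$, and restricts first to compactly supported test functions; it also obtains the $X$-seminorms of the limit by weak-$*$ compactness in each component space rather than by Fatou, which is the cleaner tool for the seminorms involving $u_x$ and $u_{xx}$ where your pointwise convergence of $u_\delta$ alone does not directly suffice.
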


\begin{proof}
Let $T_0$ be the same value as obtained in Lemma \ref{L3.2}.

For any $\delta\in (0,1]$ consider the solution $u_\delta \in X(\Pi_{T_0})$ to problem \eqref{3.1}, \eqref{1.2}. According to \eqref{3.10} this set is bounded in the space $X(\Pi_{T_0})$. In particular, for any interval $I_n = (-n,n)$, $n\in \mathbb N$, uniformly with respect to $\delta$
\begin{equation}\label{3.18}
\|u_\delta\|_{L_2(0,T;H^2(I_n))} \leq c(n).
\end{equation}
Then since
\begin{equation}\label{3.19}
\bigl( g_\delta(|u|^2)\bigr)_x = g_\delta'(|u|^2) (u\bar u_x + u_x \bar u)
\end{equation}
with the use of equation \eqref{3.1} itself and inequalities \eqref{3.2} we derive that uniformly with respect to $\delta$
\begin{equation}\label{3.20}
\|u_{\delta t}\|_{L_2(0,T;H^{-1}(I_n))} \leq c(n).
\end{equation}
Then by the standard argument we select a sequence $\delta_k\to +0$ such that
\begin{align*}
u_{\delta_k} \rightharpoonup u&\quad  *-\text{weakly in}\quad L_\infty(0,T_0; H^1),\\
u_{\delta_k x} \rightharpoonup u_x&\quad  *-\text{weakly in}\quad L_6(0,T_0; L_\infty),\\
u_{\delta_k xx} \rightharpoonup u_{xx}&\quad  *-\text{weakly in}\quad L_\infty(\mathbb R; L_2(0,T_0)),\\
u_{\delta_k} \rightharpoonup u&\quad  *-\text{weakly in}\quad L_2(\mathbb R; L_\infty(0,T_0)),\\
u_{\delta_k}\rightarrow u &\quad \text{strongly in}\quad L_2(0,T_0;H^1(I_n)) \quad \forall n\in \mathbb N.
\end{align*}
Write the corresponding integral identities for the functions $u_\delta$:
\begin{multline}\label{3.21}
\int_0^T \!\! \int \bigl[ i u_\delta \phi_t -a u_{\delta} \phi_{xx} + ib u_{\delta}\phi_x +i u_{\delta} \phi_{xxx}-\lambda g_\delta(|u_\delta|^2|) u_\delta \phi  +i\beta g_\delta(|u_\delta|^2) u_\delta \phi_x \\   -idu_\delta\phi\bigr]\,dxdt  +
\int u_0 \phi\big|_{t=0}\,dx =0,
\end{multline}
where $\phi \in C^1([0,T];L_2) \cap C([0,T]; H^3)$, $\phi\big|_{t=T} =0$. Assume first that the function $\phi$ has a compact support with respect to $x$. We have:
$$
g_\delta(|u_\delta|^2) - |u| = (|u_\delta|^2 +\delta)^{1/2} - |u_\delta| +  |u_\delta| - |u|,
$$
where
$$
\bigl| (|u_\delta|^2 +\delta)^{1/2} - |u_\delta| \bigr| \leq \delta^{1/2}, \quad \bigl| |u_\delta| - |u|\bigr| \leq |u_\delta - u|,
$$
and the passage to the limit in \eqref{3.21} when $\delta = \delta_k$ is obvious. As a result we obtain the corresponding integral identity of \eqref{1.3}-type for the functions $\phi$ with a compact support. Approximating general test functions from Definition~\ref{D1.1} by such ones and passing to the limit we derive the desired integral identity in its general form.

Finally, note that according to Lemma~\ref{L3.1} $F(u(t,x)) \in L_1(0,T_0;H^1)$ and by virtue of Lemma~\ref{L2.6} and uniqueness of a solution to a linear problem $u\in X(\Pi_{T_0})$. 
\end{proof}

\begin{lemma}\label{L3.3}
Let $u_0\in H^1$, $d\in H^1$ and for certain $T>0$ a function $u\in X(\Pi_T)$ be a solution in $\Pi_T$ to problem \eqref{1.1}, \eqref{1.2}. Then for any $t\in (0,T]$
\begin{equation}\label{3.22}
\int |u(t,x)|^2\, dx  + 2\int_0^t \!\! \int d(x) |u(\tau,x)|^2\, dx d\tau = \int |u_0|^2\, dx .
\end{equation}
\end{lemma}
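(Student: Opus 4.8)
The plan is to view \eqref{1.1} as the linear equation \eqref{2.1} with right-hand side $f\equiv F(u)$, where $F$ is the operator \eqref{3.3} taken with $\delta=0$, and then to invoke the $L_2$-identity \eqref{2.15} that is already available for the linear problem. First I would check that $F(u)\in L_1(0,T;L_2)$: since $u\in X(\Pi_T)$ and $d\in H^1$, Lemma~\ref{L3.1} with $\delta=0$ gives $F(u)\in L_2(0,T;H^1)\subset L_1(0,T;L_2)$, so in particular $(|u|u)_x$ is a genuine $L_2$-function. Rewriting \eqref{1.3} and transferring the $x$-derivative in the $\beta$-term from $\phi_x$ back onto $|u|u$ — which is legitimate because $|u|u\in C([0,T];L_1)$ and $|u|u\to0$ as $x\to\pm\infty$ (recall $u\in H^1\subset C_b$ with $u\to 0$ at $\pm\infty$) — shows that $u$ is a weak solution of the linear problem \eqref{2.1}, \eqref{2.2} with this $f$ and initial data $u_0$. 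Lemma~\ref{L2.3} produces a weak solution $\widetilde u\in C([0,T];L_2)$ of that same linear problem which obeys \eqref{2.15}; by Remark~\ref{R2.1} both $u$ and $\widetilde u$ are generalized solutions, so Lemma~\ref{L2.2} forces $u=\widetilde u$. Consequently, for a.e.\ $t\in(0,T)$,
\[
\frac{d}{dt}\int |u|^2\,dx = 2\Im\int F(u)\bar u\,dx .
\]

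Next I would evaluate the right-hand side. The term $-\lambda|u|u\bar u=-\lambda|u|^3$ is real and drops out. The term $-id(x)u\bar u=-id(x)|u|^2$ has imaginary part $-d(x)|u|^2$, contributing $-2\int d(x)|u|^2\,dx$. For the $\beta$-term, $\Im\bigl(-i\beta(|u|u)_x\bar u\bigr)=-\beta\Re\bigl((|u|u)_x\bar u\bigr)$; integrating by parts (using once more that $|u|^3=|u|u\cdot\bar u\to 0$ at $\pm\infty$ and that $|u|u,\bar u\in H^1$ for a.e.\ $t$), together with the elementary identity $\Re(u\bar u_x)=\tfrac12(|u|^2)_x$ and the chain rule $|u|(|u|^2)_x=\tfrac23(|u|^3)_x$ — the composition of $|u|^2\in W^{1,1}(\mathbb R)$ with $\theta\mapsto\theta^{3/2}$, the type of statement collected in Appendix~\ref{B} — gives
\[
\int \Re\bigl((|u|u)_x\bar u\bigr)\,dx = -\int |u|\,\Re(u\bar u_x)\,dx = -\tfrac13\int (|u|^3)_x\,dx = 0 .
\]
Hence $\dfrac{d}{dt}\int|u|^2\,dx = -2\int d(x)|u|^2\,dx$ for a.e.\ $t\in(0,T)$.

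Finally, since $d\in H^1\subset C_b$ and $u\in C([0,T];L_2)$, the right-hand side lies in $L_1(0,T)$ and $t\mapsto\int|u(t,x)|^2\,dx$ is continuous on $[0,T]$; therefore this function is absolutely continuous, and integrating from $0$ to $t$ (using $u|_{t=0}=u_0$) yields exactly \eqref{3.22}. I expect the only genuinely delicate point to be the vanishing of the $\beta$-contribution: one must avoid differentiating $|u|$ directly and work instead through $|u|^2$ and $|u|^3$, both of which are honestly $W^{1,1}$-regular, justifying all boundary terms by the $H^1$-decay of $u$; everything else is bookkeeping built on Lemmas~\ref{L2.2}, \ref{L2.3} and~\ref{L3.1}.
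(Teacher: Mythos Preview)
Your proposal is correct and follows essentially the same route as the paper: interpret $u$ as a solution of the linear problem \eqref{2.1}--\eqref{2.2} with $f=F(u)$ via Lemma~\ref{L3.1}, apply the $L_2$-identity \eqref{2.15} from Lemma~\ref{L2.3}, and verify that the $\lambda$- and $\beta$-contributions vanish while the $d$-term produces $-2\int d|u|^2\,dx$. The only cosmetic difference is that the paper expands $(|u|u)_x=|u|_x u+|u|u_x$ and kills each piece separately using Lemma~\ref{LB.1}, whereas you integrate by parts first and thereby avoid writing $|u|_x$ at all; both reductions land on $\int(|u|^3)_x\,dx=0$.
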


\begin{proof}
Consider $u$ as the solution to problem \eqref{2.1}, \eqref{2.2}, where $f(t,x) \equiv F(u(t,x))$ is defined by formula \eqref{3.3} for $\delta=0$. Then Lemma~\ref{L3.1} implies that $f\in L_2(0,T;H^1)$ and according to Lemma~\ref{L2.3} equality \eqref{2.15} holds:
\begin{equation}\label{3.23}
\frac{d}{dt} \int |u|^2\,dx +2\Im \int  \bigl[ \lambda |u| u +i \beta \bigl(|u| u\bigr)_x + i d(x) u\bigr] \bar u\,dx =0.
\end{equation}
Obviously,
$$
\Im \int |u| u \bar u \,dx =0,
$$
$$
\Im i\int 2 |u| u_x \bar u \,dx = \int 2 |u| \Re( \bar u  u_x) \, dx  = \frac2{3} \int \bigl(|u|^{3}\bigr)_x \, dx =0,
$$
$$
\Im i\int 2 |u|_x u \bar u \,dx = -\int 2|u| \bigl(|u|^2\bigr)_x \,dx  =   -\frac4{3} \int \bigl(|u|^{3}\bigr)_x \, dx =0
$$
and \eqref{3.23} yields that 
$$
\frac{d}{dt} \int |u|^2\,dx + 2\int d(x) |u|^2 \,dx =0.
$$
\end{proof}

\begin{theorem}\label{T3.2}
Let $u_0 \in H^{1,\alpha}$ for certain $\alpha\geq 0$, $d\in H^1$, $\beta\ne 0$. Then there exists a function $u(t,x)$ such that $u\in X(\Pi_T) \cap C_w([0,T];H^{1,\alpha})$ for any $T>0$ and is a  solution to problem \eqref{1.1}, \eqref{1.2} in $\Pi_T$; moreover, $u_{xx} \in L_2(0,T;L_2^{\rho'_{\alpha,\epsilon}(x)})$ for any $\epsilon>0$.
\end{theorem}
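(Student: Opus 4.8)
The plan is to extend the local solution furnished by Theorem~\ref{T3.1} to an arbitrary interval $[0,T]$ by a priori estimates, and then to propagate the weight $(1+x_+)^\alpha$. Since the local existence time in Theorem~\ref{T3.1} depends only on $\|u_0\|_{H^1}$, it suffices to control $\|u(t,\cdot)\|_{H^1}$ on every finite interval. The $L_2$ component is already supplied by Lemma~\ref{L3.3}, which gives $\|u(t,\cdot)\|_{L_2}\le\|u_0\|_{L_2}$. For the $H^1$ component I would regard $u$ as the solution of the linear problem \eqref{2.1}, \eqref{2.2} with $f\equiv F(u)$, $F$ given by \eqref{3.3} with $\delta=0$; by Lemma~\ref{L3.1} one has $f\in L_2(0,T;H^1)$, so by uniqueness $u$ is the solution of Lemma~\ref{L2.6} and identities \eqref{2.24}, \eqref{2.25}, \eqref{2.26} hold for a.e. $t$.

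The decisive step is to take a linear combination of \eqref{2.24}, \eqref{2.25}, \eqref{2.26} with coefficients depending on $\beta$ so that it reconstructs, up to the damping, the conserved energy of the divergent HNLS equation written in Section~\ref{S1}, namely
$$
\mathcal E(t)=\int\Bigl(|u_x|^2+\frac{i}{\beta}(\lambda-a\beta)\,u\bar u_x-\frac{2\beta}{3}|u|^3\Bigr)\,dx ,
$$
which corresponds to multiplying \eqref{2.24} by $1$, \eqref{2.25} by $-\beta$ and \eqref{2.26} by $(\lambda-a\beta)/\beta$. Substituting $f=-\lambda|u|u-i\beta(|u|u)_x-id(x)u$, every term containing $u_{xx}$ cancels — this is exactly where $\beta\neq0$ enters, both to make the factor $1/\beta$ legitimate and to produce the matching of the $\int|u|_x\Re(\bar u u_{xx})$ and $\int|u|_x|u_x|^2$ terms coming from \eqref{2.24} and from the $-\beta\!\int|u|_x(|u|^2)_{xx}$ term of \eqref{2.25} — and what remains is
$$
\frac{d}{dt}\,\mathcal E(t)=-2\int d(x)|u_x|^2\,dx+R(t),
$$
with a favourable first term and a remainder $R(t)$ made of integrals linear in $u_x$ and polynomial in $u$ (including $-2\beta\!\int d|u|^3$ and $-2\Re\!\int d'u\bar u_x$ from the damping, which is where $d\in H^1$ is used). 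By the one-dimensional Gagliardo--Nirenberg inequality together with $\|u\|_{L_2}\le\|u_0\|_{L_2}$ one bounds $|R(t)|\le\varepsilon\|u_x(t,\cdot)\|_{L_2}^2+c(\|u_0\|_{L_2})$ and, by the same inequalities, $\mathcal E(t)\ge\frac12\|u_x(t,\cdot)\|_{L_2}^2-c(\|u_0\|_{L_2})$. Gronwall's lemma then gives a bound for $\|u_x(t,\cdot)\|_{L_2}$, hence for $\|u(t,\cdot)\|_{H^1}$, finite on each $[0,T]$; iterating the local construction of Theorem~\ref{T3.1} yields $u\in X(\Pi_T)$ for all $T>0$, and weak continuity in $H^1$ follows in the usual way. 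As everywhere in Section~\ref{S2}, these identities are first justified for the smooth regularized solutions $u_\delta$ of \eqref{3.1} with smoothed data, and then obtained in the limit.

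With $u\in X(\Pi_T)$ at hand I would run the same scheme with the weight $\rho\equiv\rho_{\alpha,\epsilon}(x)$. The weighted $L_2$-identity \eqref{2.22} of Lemma~\ref{L2.5} controls $\|u\|_{C([0,T];L_2^\alpha)}$ and $\|u_x\|_{L_2(0,T;L_2^{\rho'_{\alpha,\epsilon}})}$; to reach the level of $u_x$ one multiplies, in the spirit of Lemma~\ref{L2.6} but with the unbounded weight $\rho_{\alpha,\epsilon}$, the equation by $-2(\bar u_x\rho)_x$, by $2|u|\bar u\rho$ and by $2\bar u_x\rho$, and forms again the $\beta$-dependent combination so that the only surviving $u_{xx}$-term is the good one $\int|u_{xx}|^2\rho'$, while the extra terms produced by $\rho',\rho'',\rho'''\neq0$ are absorbed through \eqref{1.6} ($|\rho^{(j)}|\le c\rho$, $|\rho^{(j)}|\le c\rho'$) and the already known $X(\Pi_T)$ bounds. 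Carrying out these estimates on the regularized, rapidly decaying approximations, where the weighted quantities are finite, and passing to the limit gives $u,u_x\in L_\infty(0,T;L_2^\alpha)$, hence $u\in C_w([0,T];H^{1,\alpha})$, together with $u_{xx}\in L_2(0,T;L_2^{\rho'_{\alpha,\epsilon}})$.

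The principal difficulty is the algebra of the energy combination: one must check that, with the coefficients forced by $\beta\neq0$, all integrals containing $u_{xx}$ — which cannot be dominated by the $H^1$, respectively the weighted $H^1$, norm alone — cancel exactly, both in the flat case and with the weight $\rho_{\alpha,\epsilon}$ whose derivatives are non-trivial. The remaining, routine difficulty is to legitimize all these computations at the regularity of weak solutions, which is done by the ``smooth first, then pass to the limit'' procedure already used for Lemmas~\ref{L2.3}, \ref{L2.5}, \ref{L2.6} and for Lemma~\ref{L3.3}, together with the approximation of $u_0$ in $H^{1,\alpha}$.
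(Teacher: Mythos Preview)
Your proposal is correct and follows the paper's proof almost verbatim: the $L_2$ bound from Lemma~\ref{L3.3}, then the linear combination of \eqref{2.24}, \eqref{2.25}, \eqref{2.26} with coefficients $1$, $-\beta$, $(\lambda/\beta - a)$ to get the energy identity \eqref{3.32} and the global $H^1$ bound, then the weighted analogues. One small technical difference worth noting: in the weighted step the paper does \emph{not} need the third weighted identity (the analogue of \eqref{2.26} with $\rho$); it sums only \eqref{2.28} and \eqref{2.29} (multiplied by $-\beta$) to obtain \eqref{3.46}, and the uncancelled term $2(\lambda-a\beta)\Im\!\int|u|_x u\bar u_x\psi\,dx$ is absorbed directly via the already-established bound $\|u\|_{L_\infty(\Pi_T)}\le c$ from the unweighted $H^1$ estimate. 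Also, to justify the unbounded weight $\rho_{\alpha,\epsilon}$ the paper uses the explicit bounded truncation $\rho_{r,\alpha,\epsilon}$ of \eqref{3.40} and passes $r\to+\infty$, rather than invoking rapidly decaying approximations of $u$.
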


\begin{proof}
Estimate the following a priori estimate. Let for certain $T' \in (0,T]$ a function $u\in X(\Pi_{T'})$ be a solution to problem \eqref{1.1}, \eqref{1.2}. Then there exists a constant, depending on $T$, $\|u_0\|_{H^1}$ and $\|d\|_{H^1}$, such that
\begin{equation}\label{3.24}
\|u\|_{C([0,T'];H^1)} \leq c.
\end{equation}
First note that equality \eqref{3.22} yields that 
\begin{equation}\label{3.25}
\|u\|_{C[0,T']; L_2)} \leq c(T,\|u_0\|_{L_2},\|d\|_{L_\infty}).
\end{equation}
Next, let the function $F(u)$ be defined by formula \eqref{3.3} for $\delta=0$. Then Lemma~\ref{L3.1} provides that $F\in L_2(0,T';H^1)$ and, therefore, Lemma~\ref{L2.6} for $f\equiv F$ can be applied. Write down equality \eqref{2.24}:
\begin{equation}\label{3.26}
\frac{d}{dt} \int |u_x|^2\,dx +2\Im \int  \bigl[ \lambda |u| u +i \beta \bigl(|u| u\bigr)_x +  i d(x) u\bigr]_x \bar u_x\,dx =0.
\end{equation}
Here
\begin{multline*}
2 \Im \int \bigl(|u| u\bigr)_x \bar u_x \,dx = 2 \Im \int |u|_x u \bar u_x \,dx  + 2 \Im \int |u| u_x \bar u_x \,dx \\ =
2\Im \int |u|_x  u \bar u_x \,dx,
\end{multline*}
\begin{multline*}
2 \Im i\int \bigl(|u| u_x\bigr)_x \bar u_x \,dx = 2 \int |u|_x |u_x|^2 \,dx + 2 \Re \int |u| u_{xx} \bar u_x \,dx  \\ =
2 \int |u|_x |u_x|^2 \,dx + \int |u| \bigl(|u_x|^2\bigr)_x \,dx = \int |u|_x |u_x|^2 \,dx,
\end{multline*}
\begin{multline*}
2 \Im i\int (|u|_x u)_x \bar u_x \,dx = 2 \Re \int (|u|_x u)_x \bar u_x \,dx = -2\Re \int |u|_x u \bar u_{xx} \,dx \\ = 
-\int |u|_x (u \bar u_{xx} +u_{xx} \bar u)\, dx =
-\int |u|_{x} \bigl[\bigl(|u|^2\bigr)_{xx} -2|u_x|^2\bigr] \,dx \\ = -\int |u|_{x} \bigl(|u|^2\bigr)_{xx} \,dx
+ 2 \int |u|_x |u_x|^2 \,dx.
\end{multline*}
As a result it follows from \eqref{3.26} that
\begin{multline}\label{3.27}
\frac{d}{dt} \int |u_x|^2\,dx + 2\lambda \Im \int |u|_x  u \bar u_x \,dx + 3\beta \int |u|_x |u_x|^2 \,dx \\ -
\beta \int |u|_{x} \bigl(|u|^2\bigr)_{xx} \,dx +2 \int d(x) |u_x|^2 \,dx + 2\Re \int d'(x) u \bar u_x \,dx =0.
\end{multline}

Write down equality \eqref{2.26}:
\begin{equation}\label{3.28}
i\frac{d}{dt} \int u \bar u_x\, dx + 2 \Re \int \bigl[ \lambda |u| u +i \beta \bigl(|u| u\bigr)_x +  i d(x) u\bigr] \bar u_x\, dx =0.
\end{equation}
Here
$$
2\Re \int |u| u \bar u_x \,dx = \int |u| \bigl(|u|^2\bigr)_x \,dx =0,
$$
$$
2\Re i\int |u| u_x \bar u_x \,dx = - 2\Im \int |u| |u_x|^2 \,dx =0,
$$
$$
2\Re i\int |u|_x u \bar u_x \,dx = -2\Im \int |u|_x u \bar u_x \,dx.
$$
As a result it follows from \eqref{3.28} that
\begin{equation}\label{3.29}
i\frac{d}{dt} \int u \bar u_x\, dx - 2\beta \Im \int |u|_x u \bar u_x \,dx - 2\Im \int d(x) u \bar u_x \,dx =0.
\end{equation}

Finally, write down equality \eqref{2.25}:
\begin{multline}\label{3.30}
\frac2{3}\frac{d}{dt} \int |u|^{3}\,dx + 2a \Im \int |u|_x u \bar u_x \,dx + 3\int |u|_x |u_x|^2\, dx  \\ -
\int |u|_x \bigl(|u|^2\bigr)_{xx}\, dx   
+ 2\Im \int \bigl[ \lambda |u| u +i \beta \bigl(|u| u\bigr)_x +  i d(x) u\bigr] |u| \bar u\, dx =0.
\end{multline}
Here
$$
2\Im \int |u|^{2} u \bar u \,dx =0,
$$
$$
2\Im i\int |u|^{2} u_x \bar u \,dx  =  \int |u|^{2} \bigl(|u|^2\bigr)_x \,dx =0,
$$
$$
2\Im i\int |u|_x |u| u \bar u \,dx = \int \bigl(|u|^{2}\bigr)_x |u|^2 \,dx =0.
$$
As a result, it follows from \eqref{3.30} that
\begin{multline}\label{3.31}
\frac2{3}\frac{d}{dt} \int |u|^{3}\,dx + 2a \Im \int |u|_x u \bar u_x \,dx + 3\int |u|_x |u_x|^2\, dx \\ -
\int |u|_x \bigl(|u|^2\bigr)_{xx}\, dx 
+ 2\int  d(x) |u|^{3} \, dx =0.
\end{multline}

Sum equality \eqref{3.27} with \eqref{3.29}, multiplied by $\displaystyle \left( \frac{\lambda}{\beta} - a\right)$, and with \eqref{3.31}, multiplied by $\displaystyle -\beta$, then
\begin{multline}\label{3.32}
\frac{d}{dt} \left[ \int |u_x|^2 \,dx + i\left(\frac{\lambda}{\beta} - a\right) \int u \bar u_x \,dx  - 
\frac{2\beta}{3 } \int |u|^{3} \,dx \right] \\ 
+ 2\int d(x)|u_x|^2 \,dx  + 2\Re \int d'(x) u \bar u_x \,dx  -2
\left(\frac{\lambda}{\beta} - a\right) \Im \int d(x) u \bar u_x \,dx \\ -
2\beta \int d(x) |u|^{3} \,dx =0.
\end{multline}
Note that 
\begin{multline*}
\Bigl| 2\Re \int d'(x) u \bar u_x \,dx \Bigr| \leq 2\sup\limits_{x\in\mathbb R} |u| \cdot\|d'\|_{L_2} \Bigl(\int |u_x|^2 \,dx \Bigr)^{1/2} \\ \leq
c \Bigl( \int |u_x|^2\, dx\Bigr)^{3/4} \Bigl( \int |u|^2\, dx\Bigr)^{1/4},
\end{multline*}
\begin{multline*}
\int |u|^{3}\, dx \leq 
\sup\limits_{x\in \mathbb R} |u| \int |u|^2\, dx \leq 
c \Bigl(\int |u_x|^2\,dx\Bigr)^{1/4} 
\Bigl(\int |u|^2\,dx\Bigr)^{5/4} \\ \leq
\varepsilon \int |u_x|^2\,dx +
c(\varepsilon) \Bigl(\int |u|^2\,dx\Bigr)^{5/3},
\end{multline*}
where $\varepsilon >0$ can be chosen arbitrarily small. Then equality \eqref{3.32} with the use of the already obtained estimate \eqref{3.25} provides estimate \eqref{3.24}. 

Global estimate \eqref{3.24} together with the results on local well-posedness from Theorem~\ref{T3.1} provides existence of the solution to problem \eqref{1.1}, \eqref{1.2} $u\in X(\Pi_T)$ $\forall T>0$. 

Next, establish the corresponding properties of the solution $u$ in weighted spaces.
Let $\psi(x)\in C_b^3$, $\psi, \psi' \geq 0$. Write down equality \eqref{2.17} for $f\equiv F$, then
\begin{multline}\label{3.33}
\frac{d}{dt}\int |u(t,x)|^2\psi\, dx +3 \int |u_x|^2\psi' \,dx = 2a \Im \int u_x \bar u \psi'\, dx \\ +
\int |u|^2 \psi'''\, dx +  b\int |u|^2 \psi' \, dx - 2\Im \int \bigl[ \lambda |u| u + i \beta \bigl(|u| u\bigr)_x + i d(x) u \bigr] \bar u \psi\, dx.
\end{multline}
Here
\begin{equation}\label{3.34}
\Im \int |u| u \bar u \psi \,dx =0,
\end{equation}
\begin{multline}\label{3.35}
-\Im i\int 2 \bigl(|u| u\bigr)_x \bar u \psi \,dx = - \int 2\Re \bigl(|u| u\bigr)_x \bar u \psi \,dx = \int |u| 2\Re (u \bar u_x)  \psi\,dx  \\ +  2 \int |u|^3 \psi'\,dx  =   
\int |u| \bigl(|u|^{2}\bigr)_x \psi \, dx 
 +  2 \int |u|^3 \psi'\,dx =  \frac43 \int |u|^3 \psi' \,dx.
\end{multline}
Note that 
\begin{equation}\label{3.36}
\int |u|^3 \psi'\,dx \leq  \sup\limits_{x\in \mathbb R} \bigl(|u|\sqrt{\psi'}\bigr) \Bigl(\int |u|^2 \psi' \,dx \int |u|^2 \,dx \Bigr)^{1/2},
\end{equation}
where
\begin{multline}\label{3.37}
\sup\limits_{x\in \mathbb R} |u|\sqrt{\psi'} \leq \Bigl( \int \bigl| (u \sqrt{\psi'})_x u\sqrt{\psi'} \bigr| \,dx \Bigr)^{1/2}  \leq
\Bigl( \int \bigl( |u_x u| \psi' + \frac12 |u^2 \psi''| \bigr)\,dx \Bigr)^{1/2} \\ \leq
\Bigl( \int |u_x|^2 \psi' \,dx \int |u|^2\psi' \,dx \Bigr)^{1/4} + \Bigl( \int |u|^2 |\psi''| \,dx \Bigr)^{1/2}.
\end{multline}
As a result,
\begin{multline}\label{3.38}
\int |u|^3 \psi' \, dx \leq \varepsilon \int |u_x|^2\psi' \,dx + c(\varepsilon) \Bigl(\int |u|^2 \,dx \Bigr)^{2/3} \int |u|^2\psi' \,dx  \\ + 
c \Bigl( \int |u|^2\,dx \Bigr)^{1/2} \int |u|^2 (\psi' +|\psi''|)\,dx,
\end{multline}
where $\varepsilon>0$ can be chosen arbitrarily small. Then equality \eqref{3.33} and estimate \eqref{3.25} imply that
\begin{equation}\label{3.39}
\frac{d}{dt}\int |u(t,x)|^2\psi\, dx +\int |u_x|^2\psi' \,dx \leq c\int |u|^2 \bigl(\psi+ \psi' + |\psi''| + |\psi'''|\bigr)\,dx,
\end{equation}
where the constant $c$ depends on $T$, $\|u_0\|_{L_2}$, $\|d\|_{L_\infty}$, $a$, $b$, $\beta$ and does not depend on $\psi$.

Approximate the function $\rho_{\alpha,\epsilon}(x)$ in such a way: for any $r>0$ let
\begin{equation}\label{3.40}
\rho_{r,\alpha,\epsilon}(x) \equiv \rho_{\alpha,\epsilon}(x) \eta(x+r-1) + \rho_{\alpha,\epsilon}(r+1) \eta(x-r),
\end{equation}
where the cut-off function $\eta$ is defined in Section~\ref{S1}.

Obviously, $\rho_{r,\alpha,\epsilon}(x) = \rho_{\alpha,\epsilon}(x)$ for $x\leq r$, $\rho_{r,\alpha,\epsilon}(x) = (2+r)^{2\alpha}$ for $x\geq r+1$. The function $\rho_{r,\alpha,\epsilon} \in C_b^\infty$, $\rho^{(j)}_{r,\alpha,\epsilon}(x) \to \rho^{(j)}_{\alpha,\epsilon}(x)$ when $r\to +\infty$ and $|\rho^{(j)}_{r,\alpha,\epsilon} (x)| \leq c(\alpha,\epsilon,j) \rho_{\alpha,\epsilon} (x)$ for any $x\in \mathbb R$ and any non-negative integer number $j$, $0 \leq\rho'_{r,\alpha,\epsilon}(x) \leq c(\alpha) \rho'_{\alpha,\epsilon}(x)$ for any $x\in \mathbb R$.

Set in \eqref{3.39} $\psi(x) \equiv \rho_{r,\alpha,\epsilon}(x)$. then uniformly with respect to $r$
\begin{equation}\label{3.41}
\frac{d}{dt}\int |u(t,x)|^2\rho_{r,\alpha,\epsilon}\, dx +\int |u_x|^2\rho_{r,\alpha,\epsilon}' \,dx \leq c\int |u|^2 \rho_{r,\alpha,\epsilon}\,dx,
\end{equation}
whence it follows that
\begin{equation}\label{3.42}
\|u\|_{L_\infty(0,T;L_2^\alpha)} +\|u_x\|_{L_2(0,T;L_2^{\rho'_{\alpha,\epsilon}(x)})} \leq c(T,\alpha,\epsilon, \|u_0\|_{L_2^\alpha}, \|d\|_{L_\infty}, a, b, \beta).
\end{equation}

Again let $\psi(x) \equiv \rho_{r,\alpha,\epsilon}(x)$. Write down corresponding equality \eqref{2.28}. Note that
\begin{equation}\label{3.43}
F_x = -\lambda \bigl(|u| u\bigr)_x - i \beta \bigl(|u| u\bigr)_{xx} - i \bigl(d(x) u\bigr)_x,
\end{equation}
then
\begin{multline*}
\frac{d}{dt}\int |u_x(t,x)|^2\psi\, dx +3 \int |u_{xx}|^2\psi' \,dx - 2a \Im \int u_{xx} \bar u_x \psi'\, dx \\ -
\int |u_x|^2 \psi'''\, dx -b \int |u_x|^2 \psi' \, dx + 2 \lambda\Im \int |u|_x u \bar u_x \psi\, dx  \\ +
2\beta \Re \int \bigl(|u| u\bigr)_{xx} \bar u_x \psi \,dx + 2\Re \int \bigl(d(x) u\bigr)_x \bar u_x \psi \,dx =0.
\end{multline*}
Here
\begin{multline*}
2\Re \int \bigl(|u| u\bigr)_{xx} \bar u_x \psi \,dx = 2\Re \int \bigl(|u| u_x\bigr)_{x} \bar u_x \psi \,dx + 2\Re \int \bigl(|u|_x u\bigr)_{x} \bar u_x \psi \,dx \\ =
2\int |u|_x |u_x|^2\psi \,dx + 2\Re \int  |u| u_{xx} \bar u_x \psi \,dx - 2\Re \int |u|_x u \bar u_{xx} \psi \,dx \\ -
2\Re \int |u|_x u \bar u_x \psi' \,dx =
2\int |u|_x |u_x|^2 \psi \,dx  + \int |u| \bigl(|u_x|^2\bigr)_x \psi\,dx \\ -
\int |u|_x \bigl( (|u|^2)_{xx} - 2 |u_x|^2 \bigr) \psi\,dx  - 2\Re \int |u|_x u \bar u_x \psi' \,dx \\ =
3 \int |u|_x |u_x|^2 \psi\,dx  - \int |u|_x \bigl( |u|^2\bigr)_{xx} \psi\,dx - \int |u| |u_x|^2 \psi' \,dx  -
\int |u|_x (|u|^2)_x \psi' \,dx.
 \end{multline*}
As a result,
\begin{multline}\label{3.44}
\frac{d}{dt}\int |u_x(t,x)|^2\psi(x)\, dx +3 \int |u_{xx}|^2\psi' \,dx - 2a \Im \int u_{xx} \bar u_x \psi'\, dx \\ -
 \int |u_x|^2 \psi'''\, dx -b \int |u_x|^2 \psi' \, dx + 2 \lambda\Im \int |u|_x u \bar u_x \psi\, dx  \\ +
3\beta \int |u|_x |u_x|^2 \psi\,dx  - \beta \int |u|_x \bigl( |u|^2\bigr)_{xx} \psi\,dx  -\beta \int |u| |u_x|^2 \psi' \,dx \\ -
\beta \int |u|_x (|u|^2)_x \psi' \,dx + 2 \int d(x) |u_x|^2 \psi \,dx + 2\Re \int d'(x) u \bar u_x \psi \,dx =0.
\end{multline}
Next write down corresponding equality \eqref{2.29}:
\begin{multline*}
\frac23 \frac{d}{dt} \int |u|^3\psi \,dx + 2a \Im \int \bigl(|u|\psi\bigr)_x u \bar u_x \,dx + 
3\int \bigl(|u|\psi\bigr)_x |u_x|^2\, dx \\ -
\int \bigl(|u|\psi\bigr)_x \bigl(|u|^2\bigr)_{xx}\, dx -\frac23 b \int |u|^3\psi' \,dx 
+2 \lambda \Im \int |u|^4 \psi\, dx \\+
2 \beta \Re \int \bigl(|u| u\bigr)_{x} |u| \bar u \psi \,dx  + 2 \int d(x)  |u|^3 \psi \,dx =0.
\end{multline*}
Here
\begin{multline*}
2 \Re \int \bigl(|u| u\bigr)_{x} |u| \bar u \psi \,dx  = 2\Re \int |u|^2 u_x \bar u\psi \,dx + 2 \int |u|_x |u|^3 \psi \,dx \\ =
2\int |u|^2 \bigl(|u|^2\bigr)_x \psi \,dx  = - \int |u|^4 \psi' \,dx. 
\end{multline*}
As a result,
\begin{multline}\label{3.45}
\frac23 \frac{d}{dt} \int |u|^3\psi \,dx + 2a \Im \int \bigl(|u|\psi\bigr)_x u \bar u_x \,dx + 
3\int \bigl(|u|\psi\bigr)_x |u_x|^2\, dx \\ -
\int \bigl(|u|\psi\bigr)_x \bigl(|u|^2\bigr)_{xx}\, dx -\frac23 b \int |u|^3\psi' \,dx 
-\beta \int |u|^4 \psi' \,dx  + 2 \int d(x)  |u|^3 \psi \,dx =0.
\end{multline}
Sum equality \eqref{3.44} with equality \eqref{3.45} multiplied by $-\beta$, then
\begin{multline}\label{3.46}
\frac{d}{dt} \int \left[ |u_x|^2 - \frac{2\beta}{3} |u|^3\right] \psi\,dx + 3 \int |u_{xx}|^2\psi' \,dx  - 2a \Im \int u_{xx} \bar u_x \psi' \,dx \\ -
\int |u_x|^2 \psi''' \,dx - b \int |u_x|^2 \psi' \,dx + 2\bigl(\lambda - a \beta\bigr) \Im \int |u|_x u \bar u_x \psi\,dx  \\ -
2a \beta \Im \int |u| u \bar u_x \psi' \,dx - 4 \beta \int |u| |u_x|^2 \psi' \,dx + \beta \int |u| \bigl(|u|^2\bigr)_{xx} \psi' \,dx \\ - 
\beta \int |u|_x (|u|^2)_x \psi' \,dx + \frac{2 b\beta}{3} \int |u|^3 \psi' \,dx + \beta^2 \int |u|^4 \psi' \,dx \\ + 
2\int d(x) |u_x|^2 \psi \,dx + 2\Re \int d'(x) u \bar u_x \psi \,dx - 2\beta \int d(x)|u|^3 \psi\,dx =0.
\end{multline}
The already obtained estimate \eqref{3.24} yields, in particular, that
\begin{equation}\label{3.47} 
\|u\|_{L_\infty(\Pi_T)} \leq c.
\end{equation}
Therefore, uniformly with respect to $r$ by virtue of \eqref{3.42}
$$
\sup\limits_{t\in (0,T)} \int |u|^3 \psi \,dx \leq c,  \quad \int |u|^2 |u_{xx}| \psi' \,dx \leq \varepsilon \int |u_{xx}|^2 \psi' \,dx + c(\varepsilon),
$$
where $\varepsilon>0$ can be chosen arbitrarily small; similarly to \eqref{3.37}
\begin{multline*}
\Bigl| \Re \int d'(x) u \bar u_x \psi \,dx \Bigr| \leq \sup\limits_{x\in \mathbb R} \bigl(|u| \sqrt{\psi}\bigr) \Bigl(\int (d'(x))^2 \,dx \int |u_x|^2 \psi \,dx \Bigr)^{1/2} \\ \leq 
c\int |u_x|^2 \psi \,dx.
\end{multline*}
Therefore, it follows from \eqref{3.46} that uniformly with respect to $r$
$$
\sup\limits_{t\in (0,T)} \int |u_x|^2 \psi \,dx + \int_0^T \!\! \int |u_{xx}|^2 \psi' \,dx dt \leq c,
$$ 
whence passing to the limit when $r\to +\infty$ we obtain that 
\begin{equation}\label{3.48}
\|u_x\|_{L_\infty(0,T;L_2^\alpha)} + \|u_{xx}\|_{L_2(0,T;L_2^{\rho'_{\alpha,\epsilon}(x)})} \leq c.
\end{equation}
The end of the proof is standard.
\end{proof}

\begin{theorem}\label{T3.3}
Let $u_0\in L_2^\alpha$ for certain $\alpha\geq 0$, $d\in L_\infty$, then there exists a function $u(t,x)$ such that for any $T>0$ it is a weak solution to problem \eqref{1.1}, \eqref{1.2}, $u\in C_w([0,T];L_2^\alpha)$, verifying  properties 
\begin{equation}\label{3.49}
\sigma(u;T) = \sup\limits_{x_0\in \mathbb R} \int_0^T \!\int_{x_0}^{x_0+1} |u_x|^2 \,dxdt \leq c(T,\|u_0\|_{L_2}, \|d\|_{L_\infty})
\end{equation} 
and $u_x \in L_2(0,T; L_2^{\rho'_{\alpha,\epsilon}(x)})$ for any $\epsilon>0$.
\end{theorem}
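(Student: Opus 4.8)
\emph{Proof proposal.} The plan is to construct $u$ as a limit of the $H^{1,\alpha}$-solutions furnished by Theorem~\ref{T3.2}. The key point -- most of which is already contained in the proof of that theorem -- is that the relevant a priori bounds for those solutions (the damped $L_2$-conservation \eqref{3.22}, the differential inequality \eqref{3.39}, and estimate \eqref{3.42}) involve constants depending only on $T$, $\alpha$, $\epsilon$, $\|u_0\|_{L_2^\alpha}$ and $\|d\|_{L_\infty}$, never on $\|u_0\|_{H^1}$ or $\|d\|_{H^1}$.

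First I would pick $u_{0n}\in H^{1,\alpha}$ with $u_{0n}\to u_0$ in $L_2^\alpha$, and $d_n\in H^1$ non-negative with $\|d_n\|_{L_\infty}\le\|d\|_{L_\infty}$ and $d_n\to d$ a.e.\ (for instance mollifications of $d$ cut off far from the origin; since Condition~A plays no role here, the non-compact support of $d$ is irrelevant). For each $n$ Theorem~\ref{T3.2}, applied with $d$ replaced by $d_n$, produces a solution $u_n\in X(\Pi_T)\cap C_w([0,T];H^{1,\alpha})$ on every strip, and for $u_n$ all the identities of the proof of Theorem~\ref{T3.2} hold. From \eqref{3.22} one gets $\|u_n\|_{C([0,T];L_2)}\le\|u_{0n}\|_{L_2}$, from \eqref{3.42} a bound on $\|u_n\|_{L_\infty(0,T;L_2^\alpha)}+\|u_{nx}\|_{L_2(0,T;L_2^{\rho'_{\alpha,\epsilon}(x)})}$ of the asserted form, and from \eqref{3.39}, taking $\psi(x)\equiv\eta(x-x_0)$ with $\eta$ chosen so that $\eta'\ge\delta_0>0$ on a fixed subinterval, integrating in $t$ and covering $[x_0,x_0+1]$ by two translates of that subinterval, a bound $\sigma(u_n;T)\le c(T,\|u_0\|_{L_2},\|d\|_{L_\infty})$ uniform in $n$ and $x_0$ (here one uses that $\sup_x(\psi+\psi'+|\psi''|+|\psi'''|)$ is bounded independently of $x_0$ and that $\int|u_0|^2\psi\le\|u_0\|_{L_2}^2$).

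Next I would invoke compactness. On each bounded interval $I$ the previous bounds give $u_n$ bounded in $L_2(0,T;H^1(I))$, while solving \eqref{1.1} (written for $u_n$, $d_n$) for $\partial_t u_n$ and estimating the nonlinear terms by $\big\||u_n|u_n\big\|_{L_1(I)}=\|u_n\|_{L_2(I)}^2$ and $\big|(|u_n|u_n)_x\big|\le 2|u_n|\,|u_{nx}|$ (cf.\ \eqref{B.7}) shows $\partial_t u_n$ bounded in $L_2(0,T;H^{-2}(I))$. By the Aubin--Lions lemma, together with a diagonal extraction over an exhaustion of $\mathbb R$ by intervals and of time by $T_k\to\infty$, a subsequence of $u_n$ converges $*$-weakly in $L_\infty(0,T;L_2^\alpha)$, weakly in $L_2(0,T;L_2^{\rho'_{\alpha,\epsilon}(x)})$, and strongly in $L_2((0,T)\times I)$ -- hence a.e.\ -- to a limit $u$ (on every strip and every $I$).

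It remains to pass to the limit in the integral identity. For the linear terms and the initial-data term this is immediate from the $*$-weak convergence and $u_{0n}\to u_0$ in $L_2$. For the nonlinear terms, $\big||u_n|u_n-|u|u\big|\le(|u_n|+|u|)\,|u_n-u|$ together with the strong $L_2$-convergence on bounded sets gives $|u_n|u_n\to|u|u$ in $L_1((0,T)\times I)$, and $d_nu_n\to du$ in $L_1((0,T)\times I)$ follows from $d_n\to d$ a.e., $|d_n|\le\|d\|_{L_\infty}$ and $u_n\to u$ in $L_2((0,T)\times I)$. This yields \eqref{1.3} for test functions $\phi$ with compact support in $x$, and the general case follows by cutting off $\phi$ and using the uniform bounds, exactly as at the end of the proof of Theorem~\ref{T3.1}. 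The bounds \eqref{3.49} and $u_x\in L_2(0,T;L_2^{\rho'_{\alpha,\epsilon}(x)})$ pass to the limit by weak lower semicontinuity, and $u\in C_w([0,T];L_2^\alpha)$ follows, by the usual argument, from $u\in L_\infty(0,T;L_2^\alpha)$ and the weak continuity in $t$ coming from the equation. The main obstacle is precisely the compactness step: one must extract from the equation an estimate on $\partial_t u_n$ that is local in $x$ and uses only the $L_2$-level bounds at one's disposal, so that Aubin--Lions applies and the nonlinear term converges. By contrast the non-smoothness of $|u|u$ is not a serious issue at this level of regularity, since $|u|u$ is continuous and obeys the Lipschitz-type bound above.
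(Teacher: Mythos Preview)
Your approach is essentially the paper's: approximate $u_0$ and $d$ by $H^1$ data, apply Theorem~\ref{T3.2}, extract uniform bounds from \eqref{3.22}, \eqref{3.39}, \eqref{3.42}, and pass to the limit via Aubin--Lions and the integral identity. The compactness step, the handling of the nonlinear term, and the weak-lower-semicontinuity arguments are all fine, and your choice of $\psi=\eta(x-x_0)$ for the $\sigma$-bound is a legitimate variant of the paper's $\psi=\rho_{0,1}(x-x_0)$.

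There is, however, one genuine gap. Theorem~\ref{T3.2} carries the hypothesis $\beta\ne 0$ (it is used in forming the energy combination \eqref{3.32}, where one divides by $\beta$). Since Theorem~\ref{T3.3} is stated for arbitrary $\beta$, your invocation of Theorem~\ref{T3.2} is illegitimate when $\beta=0$. The paper handles this by regularizing the coefficient as well: it sets $\beta_h=\beta$ if $\beta\ne 0$ and $\beta_h=h$ if $\beta=0$, solves the problem \eqref{3.50} with $\beta_h$ in place of $\beta$, and then lets $\beta_h\to\beta$ together with $u_{0h}\to u_0$ and $d_h\to d$. The a priori estimates \eqref{3.52}--\eqref{3.54} are uniform in $h$ because the constants in \eqref{3.39} depend only on $|\beta_h|\le\max(|\beta|,1)$, and in the integral identity the term $i\beta_h|u_h|u_h\phi_x$ passes to the limit since $\beta_h\to\beta$ and $|u_h|u_h\to|u|u$ locally in $L_1$. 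You should insert this additional regularization; otherwise the case $\beta=0$ is not covered.

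A minor remark: you impose $d_n\ge 0$ and conclude $\|u_n\|_{C([0,T];L_2)}\le\|u_{0n}\|_{L_2}$ directly from \eqref{3.22}. Theorem~\ref{T3.3} does not assume $d\ge 0$; if $d$ is signed, mollifications need not be non-negative, and \eqref{3.22} gives only a Gronwall-type bound $\|u_n(t,\cdot)\|_{L_2}^2\le e^{2\|d\|_{L_\infty}t}\|u_{0n}\|_{L_2}^2$. This still depends only on $\|d\|_{L_\infty}$ and suffices for everything that follows, so the issue is cosmetic.
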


\begin{proof}
Apply the mollifying procedure to approximate the functions $u_0$ and $d$ by smooth ones: $u_{0h}, d_h \in H^1$, $u_{0h} \to u_0$ in $L_2^\alpha$, $d_h \to d$ in $L_2$ when $h\to +0$, $\|d_h\|_{L_\infty} \leq \|d\|_{L_\infty}$. Consider a set of regularized problems
\begin{gather}\label{3.50}
i u_t +a u_{xx} + i b u_x + iu_{xxx} +\lambda |u| u +i\beta_{h} \bigl(|u| u\bigr)_x  + id_h(x)u=0,
\\ \label{3.51}
u(0,x) = u_{0h}(x),\quad x\in \mathbb R.
\end{gather} 
where $\beta_{h} = \beta$ if $\beta\ne 0$, $\beta_{h} =h$ if $\beta=0$. According to Theorem~\ref{T3.2} for any $h\in (0,1]$ there exists a function $u_h \in X(\Pi_T)\cap C_w([0,T];H^{1,\alpha})$ for any $T>0$ which satisfies \eqref{3.50}, \eqref{3.51} in $\Pi_T$. Equality \eqref{3.22} yields that for any $T>0$ uniformly with respect to $h$
\begin{equation}\label{3.52}
\|u_h\|_{L_\infty(0,T;L_2)} \leq c(T).
\end{equation}
Write for the functions $u_h$ corresponding inequality \eqref{3.39} for $\psi(x) \equiv \rho_{0,1}(x-x_0)$ for any $x_0\in \mathbb R$, then the properties of the function $\rho_{0,1}$ and estimate \eqref{3.52} provide that uniformly with respect to $h$
\begin{equation}\label{3.53}
\sigma (u_h;T) \leq c(T).
\end{equation}
Moreover, using in \eqref{3.39} $\psi(x) \equiv \rho_{r,\alpha,\epsilon}(x)$ similarly to \eqref{3.41}, \eqref{3.42} we derive that uniformly with respect to $h$
\begin{equation}\label{3.54}
\|u_h\|_{L_\infty(0,T;L_2^\alpha)} + \|u_{hx}\|_{L_2(0,T;L_2^{\rho'_{\alpha,\epsilon}(x)})} \leq c(T,\alpha,\epsilon).
\end{equation}

It follows from the well--known embedding $L_1 \subset H^{-1}$, estimate \eqref{3.52} and equation \eqref{3.50} itself that uniformly with respect to $h$
\begin{equation}\label{3.55}
\|u_{ht}\|_{L_\infty(0,T;H^{-3})} \leq c(T).
\end{equation}
Then by the standard argument we select a sequence $h_k\to +0$ such that for any $T>0$
\begin{align*}
u_{h_k} \rightharpoonup u&\quad  *-\text{weakly in}\quad L_\infty(0,T; L_2),\\
u_{h_k x} \rightharpoonup u_x&\quad  \text{weakly in}\quad L_2(0,T; L_2(I_n)),\\
u_{h_k}\rightarrow u &\quad \text{strongly in}\quad L_2(0,T;L_2(I_n)) \quad \forall n\in \mathbb N, I_n =(-n,n).
\end{align*}
Write down the corresponding integral identities for the functions $u_h$ and certain $T>0$:
\begin{multline}\label{3.56}
\iint_{\Pi_T} \bigl[ i u_h \phi_t  -a u_{h} \phi_{xx}+ i b u_{h}\phi_x +i u_{h} \phi_{xxx} -\lambda |u_h| u_h \phi  +i\beta_{h} |u_h| u_h \phi_x \\   -
d_h(x) u_h\phi\bigr]\,dxdt  + \int u_{0h} \phi\big|_{t=0}\,dx =0,
\end{multline}
where $\phi \in C^1([0,T];L_2) \cap C([0,T]; H^3)$, $\phi\big|_{t=T} =0$. Assume first that the function $\phi$ has a compact support with respect to $x$. Then passage to the limit in \eqref{3.56} when $h= h_k \to +0$ is obvious and we obtain \eqref{1.3} for such test functions. Approximating general test functions from Definition~\ref{D1.1} by such ones and passing to the limit we derive integral identity \eqref{1.3} in its general form.

The end of the proof is standard.
\end{proof}

\begin{remark}\label{R3.1}
Let $u_0\in H^1$, $d\in H^1$ and $u\in X(\Pi_T)$ be a solution to problem \eqref{1.1}, \eqref{1.2} for certain $T>0$. Let $|\lambda|, |\beta| \leq A$ for certain $A>0$. Then similarly to \eqref{3.52}, \eqref{3.53} one can obtain that
\begin{equation}\label{3.57}
\|u\|_{L_\infty(0,T;L_2)} + \sigma(u;T) \leq c(T, \|u_0\|_{L_2}, \|d\|_{L_\infty}, A).
\end{equation}
\end{remark}

\section{Uniqueness and continuous dependence}\label{S4}

\begin{theorem}\label{T4.1}
Let $u_0, \widetilde u_0 \in L_2^{3/4}$, $d\in L_\infty$, $u, \widetilde u \in L_\infty(0,T;L_2^{3/4})$ -- be solutions to problem \eqref{1.1}, \eqref{1.2} (for $\widetilde u$ the initial function in \eqref{1.2} is equal to $\widetilde u_0$) in the strip $\Pi_T$ for certain $T>0$. Assume that $\|u\|_{L_\infty(0,T;L_2^{3/4})}, |\widetilde u\|_{L_\infty(0,T;L_2^{3/4})} \leq M$ for certain $M>0$ Then for any $\epsilon>0$ there exists a constant $c = c(T,M,\epsilon)$ such that
\begin{equation}\label{4.1}
\esssup\limits_{t\in (0,T)} \int \bigl| u(t,x) - \widetilde u(t,x) \bigr|^2 \rho_{3/4,\epsilon} \,dx \leq
c\int \bigl| u_0 - \widetilde u_0 \bigr|^2 \rho_{3/4,\epsilon} \,dx.
\end{equation}
In particular a weak solution to problem \eqref{1.1}, \eqref{1.2} in unique in the space $L_\infty(0,T;L_2^{3/4})$.
\end{theorem}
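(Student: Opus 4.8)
The plan is to pass to the difference $w\equiv u-\tilde u$. Subtracting the identity \eqref{1.3} written for $\tilde u$ from the one written for $u$ shows that $w$ is a weak (hence, by Remark~\ref{R2.1}, generalized) solution of the linear problem \eqref{2.1}, \eqref{2.2} with initial data $w_0\equiv u_0-\tilde u_0$ and right-hand side $f\equiv f_1+f_{2x}$, where $f_1\equiv-\lambda\bigl(|u|u-|\tilde u|\tilde u\bigr)-id(x)w$ and $f_2\equiv-i\beta\bigl(|u|u-|\tilde u|\tilde u\bigr)$. Two ingredients are then prepared. First, the elementary pointwise bound $\bigl||u|u-|\tilde u|\tilde u\bigr|\le c\bigl(|u|+|\tilde u|\bigr)|w|$, combined with the Cauchy--Schwarz inequality in which the weight is split so that one factor carries exactly $(1+x_+)^{3/4}$, gives for a.e.\ $t$
\[
\bigl\|\,|u|u-|\tilde u|\tilde u\,\bigr\|_{L_1^{\rho_{3/4,\epsilon}(x)(1+x_+)^{1/2}}}+\bigl\|\,|u|u-|\tilde u|\tilde u\,\bigr\|_{L_1^{\rho_{3/4,\epsilon}(x)(1+x_+)^{3/2}}}\le cM\,\|w(t,\cdot)\|_{L_2^{\rho_{3/4,\epsilon}(x)}},
\]
and analogous simpler estimates place $|u|u-|\tilde u|\tilde u$ in $L_\infty(0,T;L_1^{1})$ and in $L_\infty(0,T;L_1^{3/2})$, as required for the hypotheses of Lemma~\ref{L2.4}. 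Second, since $d\in L_\infty$, one has $\|d(x)w(t,\cdot)\|_{L_2^{\rho_{3/4,\epsilon}}}\le\|d\|_{L_\infty}\|w(t,\cdot)\|_{L_2^{\rho_{3/4,\epsilon}}}$, and $w\in L_\infty(0,T;L_2^{3/4})\subset L_\infty(0,T;L_2^{\rho_{3/4,\epsilon}})$.

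Next I would split $w=w^{(1)}+w^{(2)}$, where $w^{(1)}$ is a weak solution of \eqref{2.1}, \eqref{2.2} with initial data $w_0$ and right-hand side $-id(x)w\in L_1(0,T;L_2^{3/4})$, furnished by Lemma~\ref{L2.5} (whose proof, carried out entirely through the $\rho_{\alpha,\epsilon}$-weighted identity \eqref{2.22} and the properties \eqref{1.6}, in fact yields $\|w^{(1)}\|_{L_\infty(0,T;L_2^{\rho_{3/4,\epsilon}})}\le c(T)\bigl[\|w_0\|_{L_2^{\rho_{3/4,\epsilon}}}+\|d\|_{L_\infty}\int_0^T\|w(\tau,\cdot)\|_{L_2^{\rho_{3/4,\epsilon}}}\,d\tau\bigr]$), and $w^{(2)}\equiv w-w^{(1)}\in L_\infty(0,T;L_2^{3/4})$ is, by linearity, the weak solution of \eqref{2.1}, \eqref{2.2} with zero initial data and right-hand side $-\lambda(|u|u-|\tilde u|\tilde u)+\bigl(-i\beta(|u|u-|\tilde u|\tilde u)\bigr)_x$. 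To the latter Lemma~\ref{L2.4} applies with $\alpha=3/4$ and any $p_1>3/2$, $p_2>4$; estimate \eqref{2.20} together with the weighted bounds above then gives $\|w^{(2)}\|_{L_\infty(0,T;L_2^{\rho_{3/4,\epsilon}})}\le c(T,\epsilon,p_1,p_2)\,M\,\bigl(T^{1/p_1}+T^{1/p_2}\bigr)\,\esssup_{t\in(0,T)}\|w(t,\cdot)\|_{L_2^{\rho_{3/4,\epsilon}}}$, the constant $c(T,\epsilon,p_1,p_2)$ staying bounded (indeed tending to $0$) as $T\to+0$, because the kernel factors $(t-\tau)^{-1/3}$ and $(t-\tau)^{-3/4}$ appearing in \eqref{A.30} are integrable in $\tau$.

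Adding the two estimates and writing $\Phi(T)\equiv\esssup_{t\in(0,T)}\|w(t,\cdot)\|_{L_2^{\rho_{3/4,\epsilon}}}$, I obtain $\Phi(T)\le c_*\|w_0\|_{L_2^{\rho_{3/4,\epsilon}}}+\bigl[c_*\|d\|_{L_\infty}T+c_*(T,\epsilon)M\bigl(T^{1/p_1}+T^{1/p_2}\bigr)\bigr]\Phi(T)$, so there is $T_0=T_0(M,\|d\|_{L_\infty},\epsilon)>0$ making the bracket $\le1/2$ on $(0,T_0)$, whence $\Phi(T_0)\le2c_*\|w_0\|_{L_2^{\rho_{3/4,\epsilon}}}$. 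Since equation \eqref{1.1} is autonomous, the same step applies on $[T_0,2T_0],[2T_0,3T_0],\dots$ with the new ``initial'' function $w(kT_0,\cdot)$, and after $\lceil T/T_0\rceil$ steps this produces \eqref{4.1} with $c=(2c_*)^{\lceil T/T_0\rceil}$; taking $u_0=\tilde u_0$ gives the uniqueness assertion.

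The main obstacle — and the reason for the splitting $w=w^{(1)}+w^{(2)}$ — is a structural dichotomy between the two kinds of forcing. The damping term $d(x)w$ cannot be handled through the representation formula of Theorem~\ref{T2.1} / the weighted estimate of Lemma~\ref{L2.4}, since $d\in L_\infty$ does not decay at $+\infty$ and $(1+x_+)^{1/4}d\notin L_2$ in general; it is, however, harmless for the $L_2$-weighted energy identity, where only $|d|\le\|d\|_{L_\infty}$ is used. Conversely the quadratic difference $|u|u-|\tilde u|\tilde u$ cannot enter the energy identity, because merely $L_2^{3/4}$ solutions $u,\tilde u$ admit no pointwise control, so $(|u|+|\tilde u|)|w|\notin L_2^{\rho_{3/4,\epsilon}}$ a priori; but it is controllable in the $L_1$-with-weight norms demanded by Lemma~\ref{L2.4}, and this is exactly where the exponent $3/4$ is sharp — the factor $(1+x_+)^{3/4}$ produced by Cauchy--Schwarz must balance the weight losses $(1+x_+)^{1/2}$ and $(1+x_+)^{3/2}$ built into the kernel estimates \eqref{A.30}. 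A secondary point is that Lemma~\ref{L2.5} is phrased in the $L_2^\alpha$ norm, whereas \eqref{4.1} requires the (smaller) weighted norm $\|u_0-\tilde u_0\|_{L_2^{\rho_{3/4,\epsilon}}}$ on the right-hand side; this is recovered by reading off the $\rho_{3/4,\epsilon}$-weighted a priori estimate directly from the identity \eqref{2.22}.
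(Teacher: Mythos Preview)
Your proposal is correct and follows essentially the same route as the paper: the same splitting $w=w^{(1)}+w^{(2)}$ with the damping term $-id(x)w$ absorbed into $w^{(1)}$ via the weighted energy identity of Lemma~\ref{L2.5} (equation~\eqref{2.22}) and the quadratic difference $|u|u-|\tilde u|\tilde u$ handled in $w^{(2)}$ through the fundamental-solution estimate of Lemma~\ref{L2.4}, followed by an absorption argument on short time intervals. Your closing paragraph on the ``structural dichotomy'' is exactly the reason for the split, and your observation that the proof of Lemma~\ref{L2.5} actually delivers the $L_2^{\rho_{3/4,\epsilon}}$ bound (rather than merely $L_2^{3/4}$) is what the paper uses implicitly in passing from \eqref{4.4} to \eqref{4.5}; the only cosmetic difference is that the paper leaves the final step as ``standard argument'' after \eqref{4.8}, whereas you spell out the small-time absorption and iteration.
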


\begin{proof}
Let $w \equiv u - \widetilde u$, $w_0 \equiv u_0 - \widetilde u_0$, $f_{11} \equiv -i d w$, $f_{12} \equiv - \lambda \bigl(|u|u - |\widetilde u|\widetilde u \bigr)$, $f_2 \equiv  -i \beta \bigl(|u|u - |\widetilde u|\widetilde u \bigr)$. Then the function $w\in L_\infty(0,T;L_2^{3/4})$ is a weak solution to problem
\begin{equation}\label{4.2}
i w_t + a w_{xx} +i b w_x  + i w_{xxx} = f_{11} + f_{12} + f_{2x},\quad
w\big|_{t=0} = w_0.
\end{equation}
Consider a linear problem
\begin{equation}\label{4.3}
i w_{1t} + a w_{1xx} +i w_{1x}   + i w_{1xxx} = f_{11},\quad
w_1\big|_{t=0} = w_0.
\end{equation}
Note that $w_0 \in L_2^{3/4}$, $f_{11} \in L_\infty(0,T;L_2^{3/4})$. Then according to Lemma~\ref{L2.5} there exists a solution to this problem $w_1 \in C([0,T]; L_2^{3/4})$, $w_{1x} \in L_2(0,T; L_2^{\rho'_{3/4,\epsilon}(x)})$ for any $\epsilon>0$. Write down corresponding equality \eqref{2.22} for $\rho(x) \equiv \rho_{3/4,\epsilon}(x)$: 
\begin{multline}\label{4.4}
\frac{d}{dt}\int |w_1(t,x)|^2\rho(x)\, dx +3\int |w_{1x}|^2\rho' \,dx = 2a \Im \int w_{1x} \bar w_1 \rho'\, dx \\ +
 \int |w_1|^2 \rho'''\, dx + b \int |w_1|^2 \rho' \, dx - 2\Re \int d w \bar w_1 \rho\, dx. 
\end{multline}
It follows from \eqref{4.4} that
\begin{equation}\label{4.5}
\int |w_1(t,x)|^2\rho \,dx \leq c \int |w_0|^2 \rho \,dx + c \int_0^t \!\! \int |w|^2 \rho \,dx d\tau.
\end{equation}
Let $w_2 \equiv w - w_1$, then $w_2 \in L_\infty(0,T;L_2^{3/4})$ and is a solution to a problem
\begin{equation}\label{4.6}
i w_{2t} + aw_{2xx}  +i b w_{2x}  + i w_{2xxx}  = f_{12} + f_{2x},\quad
w_2\big|_{t=0} = 0.
\end{equation}
Note that $|f_{12}|, |f_2| \leq c\bigl( |u| + |\widetilde u|\bigr) |w|$; in particular, $f_{12}, f_2 \in L_\infty(0,T;L_1^{3/2})$. Write corresponding inequality \eqref{2.20}, then uniformly with respect to $t\in (0,T)$ for $\rho(x) \equiv \rho_{3/4,\epsilon}(x)$ and $p>4$
\begin{multline}\label{4.7}
\esssup\limits_{\tau \in (0,t)} \Bigl(\int |w_2(\tau,x)|^2\rho \,dx \Bigr)^{1/2} \\ \leq c \Bigl[\int_0^t \Bigl( \int \bigl( |u| + |\widetilde u|\bigr) |w| \rho^{1/2} (1+x_+)^{3/4} \,dx \Bigl)^p d\tau \Bigr]^{1/p} \\ \leq c_1 \left( \| u \|_{L_\infty(0,T;L_2^{3/4})} + \| \widetilde u \|_{L_\infty(0,T;L_2^{3/4})} \right) 
\Bigl[\int_0^t \Bigl( \int |w|^2 \rho \,dx \Bigl)^{p/2} d\tau \Bigr]^{1/p}.
\end{multline}
Inequalities \eqref{4.5} and \eqref{4.7} yield that
\begin{multline}\label{4.8}
\esssup\limits_{\tau \in (0,t)} \Bigl(\int |w(\tau,x)|^2\rho \,dx \Bigr)^{1/2} \leq c \Bigl( \int |w_0|^2 \rho \,dx \Bigr)^{1/2} \\ +
c \Bigl[\int_0^t \int |w|^2 \rho \,dx d\tau \Bigr]^{1/2} + c\Bigl[\int_0^t \Bigl( \int |w|^2 \rho \,dx \Bigl)^{p/2} d\tau \Bigr]^{1/p},
\end{multline}
whence \eqref{4.1} follows by the standard argument.
\end{proof}

\section{Internal regularity}\label{S5}

\begin{theorem}\label{T5.1}
Let $u_0 \in L_2^\alpha$ for certain $\alpha>1/4$, $d\in L_\infty$, $u\in L_\infty(0,T;L_2^\alpha)$ be a solution to problem \eqref{1.1}, \eqref{1.2} for certain $T>0$. Then (after possible correction on a zero measure set) $u\in C_b(\overline{\Pi}_T^{t_0, x_0})$ for any $t_0\in (0,T)$, $x_0 \in\mathbb R$, where $\Pi_T^{t_0,x_0} =  (t_0,T) \times (x_0,+\infty)$.
\end{theorem}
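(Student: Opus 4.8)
\emph{The plan} is to avoid proving any genuine gain of derivatives and instead to identify $u$ with the explicit function given by the representation formula \eqref{2.7}, and then to read off its boundedness and continuity on $\overline{\Pi}_T^{t_0,x_0}=[t_0,T]\times[x_0,+\infty)$ from the Airy-type kernel bounds behind \eqref{2.9}--\eqref{2.12}. The two restrictions in the statement are exactly those needed to keep those bounds finite: the factor $t^{-1/3}$ in front of the free term forces $t\geq t_0>0$, while the factors $(1+x_-)^{1/4+\varepsilon}$ (and analogues) blow up as $x\to-\infty$ but are harmless as $x\to+\infty$, forcing restriction to a half-line.

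\emph{Step 1 (representation).} I would set $f_{11}\equiv-idu$, $f_{12}\equiv-\lambda|u|u$, $f_1\equiv f_{11}+f_{12}$, $f_2\equiv-i\beta|u|u$. Since $\alpha>1/4$ and $d\in L_\infty$, one has $f_{11}\in L_\infty(0,T;L_2^{1/4+\varepsilon})$ for any $\varepsilon\in(0,\alpha-1/4)$ and $|u|u\in L_\infty(0,T;L_1^{2\alpha})$, so that $f_{12}\in L_1(0,T;L_1^{-1/4})$ and $f_2\in L_1(0,T;L_1^{1/4})$; hence Theorem~\ref{T2.1} applies and produces a generalized solution of \eqref{2.1}, \eqref{2.2} (with these data) given pointwise in $\Pi_T$ by \eqref{2.7}. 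Comparing \eqref{1.3} with \eqref{2.4} shows that the given weak solution $u$ is itself a generalized solution of \eqref{2.1}, \eqref{2.2} with the same $u_0,f_1,f_2$ (Remark~\ref{R2.1}); by the uniqueness Lemma~\ref{L2.2}, $u$ coincides a.e. with the function \eqref{2.7}, which I take from now on as the representative of $u$.

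\emph{Step 2 (boundedness).} Next I would rerun the estimates \eqref{2.9}--\eqref{2.12} (valid without smoothness, as remarked after Theorem~\ref{T2.1}), but keeping the $t$-integral outside and using the genuine integrability of $|u|u$ and $du$ rather than the minimal hypotheses of Theorem~\ref{T2.1}. The free term is $\leq c(T,\varepsilon)t^{-1/3}(1+x_-)^{1/4+\varepsilon}\|u_0\|_{L_2^{1/4+\varepsilon}}$, hence $\leq c\,t_0^{-1/3}(1+|x_0|)^{1/4+\varepsilon}\|u_0\|_{L_2^\alpha}$ on $\overline{\Pi}_T^{t_0,x_0}$. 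For the Duhamel terms the inner spatial integrals are bounded: the $f_{11}$-one by $c(1+x_-)^{1/4+\varepsilon}\|du(\tau,\cdot)\|_{L_2^{1/4+\varepsilon}}$; the $f_{12}$-one simply by $\|f_{12}(\tau,\cdot)\|_{L_1}=\lambda\|u(\tau,\cdot)\|_{L_2}^2$, using $(1+y-x)^{-1/4}\leq1$ so that no power of $(1+x_+)$ is produced (contrary to the cruder \eqref{2.11}); the $f_2$-one by $c(1+x_-)^{1/4}\|u(\tau,\cdot)\|_{L_2^{1/8}}^2$. Integrating the integrable singularities $(t-\tau)^{-1/3}$ and $(t-\tau)^{-3/4}$ over $\tau\in(0,t)$ and using $\|u\|_{L_\infty(0,T;L_2^\alpha)}\leq M$, $\alpha>1/8$, $d\in L_\infty$, all three Duhamel terms turn out bounded on $\overline{\Pi}_T^{t_0,x_0}$; thus $u\in L_\infty(\Pi_T^{t_0,x_0})$.

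\emph{Step 3 (continuity), and the expected obstacle.} For $t\geq t_0$ the kernels $G(t,\cdot)$ and $G_x(t,\cdot)$ are smooth in $x$, and the bounds of Appendix~\ref{A} give, locally uniformly in $(t,x)$ on $\overline{\Pi}_T^{t_0,x_0}$, fixed integrable majorants of the integrands in \eqref{2.7} (in $y$ for the free term, in $(\tau,y)$ for the Duhamel terms once $\tau$ is kept away from $t$); dominated convergence then yields continuity of the free term and of $\int_0^{t-\delta}\!\!\int\dots\,$. Passing $\delta\to0$ in $\int_0^t=\int_0^{t-\delta}+\int_{t-\delta}^t$ for the Duhamel terms is handled by bounding the tail uniformly on $\overline{\Pi}_T^{t_0,x_0}$ by $c\int_{t-\delta}^t(t-\tau)^{-\sigma}\,d\tau\to0$ ($\sigma=1/3$ or $3/4$). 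Hence the representative \eqref{2.7} of $u$ lies in $C_b(\overline{\Pi}_T^{t_0,x_0})$. The one point that needs care, rather than routine computation, is the one already highlighted in Step~2: the estimate \eqref{2.11} as stated loses a factor $(1+x_+)^{1/4}$, so one cannot simply invoke Theorem~\ref{T2.1} but must re-derive the $f_{12}$-contribution from $|u|u\in L_\infty(0,T;L_1)$ --- and this is precisely what makes the conclusion hold on the \emph{unbounded} half-line $x\geq x_0$; the rest is bookkeeping with the kernel estimates of Appendix~\ref{A}.
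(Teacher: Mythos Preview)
Your proposal is correct and matches the paper's own proof essentially line by line: the same splitting $f_{11}=-idu$, $f_{12}=-\lambda|u|u$, $f_2=-i\beta|u|u$, the invocation of Theorem~\ref{T2.1} to obtain the representation \eqref{2.7}, and the re-derivation of the $f_{12}$-estimate with $\|f_{12}\|_{L_\infty(0,T;L_1)}$ in place of the cruder \eqref{2.11} to avoid the unwanted $(1+x_+)^{1/4}$ factor (this is exactly the paper's \eqref{5.5}). You are more explicit than the paper about the uniqueness step (Lemma~\ref{L2.2}) identifying $u$ with \eqref{2.7} and about the dominated-convergence continuity argument, but the strategy is identical.
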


\begin{proof}
Consider the function $u$ as a solution to problem \eqref{2.1}, \eqref{2.2}, where $f\equiv F$ and the function $F$ is given by formula \eqref{3.3} for $\delta=0$. Let
\begin{equation}\label{5.1}
f_{11} \equiv -i d(x) u, \quad f_{12} \equiv -\lambda u|u|, \quad f_2 \equiv -i \beta u|u|,
\end{equation}
$f_1 \equiv f_{11} + f_{12}$. Then $f= f_1 + f_{2x}$. Note that
\begin{equation}\label{5.2}
\|f_{11}\|_{L_\infty(0,T;L_2^\alpha)} \leq \|d\|_{L_\infty} \|u\|_{L_\infty(0,T;L_2^\alpha)},
\end{equation}
\begin{equation}\label{5.3}
\|f_{12}\|_{L_\infty(0,T;L_1)}, \|f_2\|_{L_\infty(0,T;L_1^{1/4})} \leq c\|u\|^2_{L_\infty(0,T;L_2^\alpha)}.
\end{equation}
Thus the hypothesis of Theorem~\ref{T2.1} is verified. Write down equality \eqref{2.7}. For the first term in the right-hand side of \eqref{2.7} apply estimate \eqref{2.9}. Next, similarly to \eqref{2.10}
\begin{multline}\label{5.4} \displaystyle
\Bigl| \int_0^t \!\! \int G(t-\tau,x-y) f_{11}(\tau,y)\, dy d\tau \Bigr| \\ \leq
c(T,\alpha)  (1+x_-)^{\alpha} \int_0^t \frac{\|f_{11}(\tau,\cdot)\|_{L_2^{\alpha}}}{(t-\tau)^{1/3}} \,d\tau  \leq
c_1(T,\alpha)  (1+x_-)^{\alpha} \|f_{11}\|_{L_\infty(0,T;L_2^{\alpha})},
\end{multline}
similarly to \eqref{2.11}
\begin{multline}\label{5.5}
\Bigl| \int_0^t \!\! \int G(t-\tau,x-y) f_{12}(\tau,y)\, dy d\tau \Bigr|  \\ \leq
c(T) \int_0^t \frac1{(t-\tau)^{1/3}} \Bigl[ \int_x^{+\infty} (1+y-x)^{-1/4} |f_{12}(\tau,y)|\, dy \\+
\int_{-\infty}^x e^{-c_0(x-y)^{3/2}T^{-1/2}} |f_{12}(\tau,y)|\, dy \Bigr]\,d\tau  \leq
c_1(T) \|f_{12}\|_{L_\infty(0,T;L_1)},
\end{multline}
similarly to \eqref{2.12}
\begin{multline}\label{5.6}
\Bigl| \int_0^t \!\! \int G_x(t-\tau,x-y) f_2(\tau,y)\, dy d\tau \Bigr|  \\ \leq
c(T)  \int_0^t \frac1{(t-\tau)^{3/4}} \Bigl[ \int_x^{+\infty} (1+y-x)^{1/4} |f_2(\tau,y)|\, dy \\+
\int_{-\infty}^x e^{-c_0(x-y)^{3/2}T^{-1/2}} |f_2(\tau,y)|\, dy \Bigr]\,d\tau  \leq
c_1(T) (1+x_-)^{1/4} \|f_2\|_{L_\infty(0,T;L_1^{1/4})}.
\end{multline}
Moreover, the considered integrals are continuous with respect to $(t,x)\in \Pi_T$.
\end{proof}

\begin{theorem}\label{T5.2}
Let $u_0 \in L_2^\alpha$ for certain $\alpha\geq 1/2$, $d\in H^1$. Consider the function $u(t,x)$, constructed in Theorem~\ref{T3.3}, which is the weak solution to problem \eqref{1.1}, \eqref{1.2} in any strip $\Pi_T$. Then $u_x \in C_w([t_0,T]; L_2^{\rho_{\alpha-1/2,\epsilon}(x)})$, $u_{xx} \in L_2(t_0,T; L_2^{\rho'_{\alpha-1/2,\epsilon}(x)})$ for any $t_0\in (0,T)$, $\epsilon>0$.
\end{theorem}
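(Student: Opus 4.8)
The plan is to bootstrap from the weighted $L_2$-regularity of $u$ already established in Theorem~\ref{T3.3} (namely $u\in C_w([0,T];L_2^\alpha)$ and $u_x\in L_2(0,T;L_2^{\rho'_{\alpha,\epsilon}(x)})$) up to weighted $H^1$-regularity away from $t=0$, by running the energy identities of Lemma~\ref{L2.6} with a weight that is a \emph{time-localized} version of $\rho_{\alpha-1/2,\epsilon}$. The loss of $1/2$ in the weight exponent is exactly what one gains by trading one integration in $x$ for the extra derivative, so the target weight $\rho_{\alpha-1/2,\epsilon}$ is consistent with the structure of the estimates in Theorem~\ref{T3.2}. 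Since $u$ need not lie in $X(\Pi_T)$ a priori (only $u\in C_w([0,T];L_2^\alpha)$ is known, with the internal $L_6(0,T;C_b)$-type bounds from Theorem~\ref{T3.3}), I would first pass to a smooth approximation: regularize the initial data and the coefficient $d$ as in the proof of Theorem~\ref{T3.3}, obtain solutions $u_h\in X(\Pi_T)\cap C_w([0,T];H^{1,\alpha})$ via Theorem~\ref{T3.2}, derive the weighted bound \eqref{3.48}-style estimates \emph{uniformly in $h$ but with a time cutoff}, and then pass to the limit $h\to+0$ using the uniqueness from Theorem~\ref{T4.1} to identify the limit with $u$.

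The key steps, in order. First, fix $t_0\in(0,T)$ and introduce a smooth time cutoff $\chi(t)$ with $\chi\equiv 0$ near $t=0$ and $\chi\equiv 1$ for $t\geq t_0$. Second, for the smooth solutions $u_h$, take the combination of equalities \eqref{2.28} and \eqref{2.29} exactly as in \eqref{3.44}--\eqref{3.46}, but now multiply through by $\chi(t)$ and integrate in $t$; the extra term is $\int |u_{hx}|^2\psi\,\chi'(t)$ (and an analogous $|u_h|^3$ term), which is controlled by the \emph{already known} bound $u_{hx}\in L_2(0,T;L_2^{\rho'_{\alpha,\epsilon}(x)})$ from \eqref{3.54} because $\chi'$ is supported in $(0,t_0)$ and $\psi\lesssim \rho_{\alpha-1/2,\epsilon}\lesssim \rho'_{\alpha,\epsilon}$ up to constants (here one uses that $\rho'_{\alpha,\epsilon}$ behaves like $(1+x)^{2\alpha-1}$ at $+\infty$, which dominates $\rho_{\alpha-1/2,\epsilon}\sim (1+x)^{2\alpha-1}$, and like $e^{2\epsilon x}$ at $-\infty$). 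Third, take $\psi(x)\equiv\rho_{r,\alpha-1/2,\epsilon}(x)$ the truncated weight from \eqref{3.40}, use \eqref{1.6} and the properties listed after \eqref{3.40} to absorb all lower-order terms, bound $\int |u_h|^2|u_{hxx}|\psi'$ by $\varepsilon\int|u_{hxx}|^2\psi'+c(\varepsilon)$ via \eqref{3.47}, and handle the $d'$-term by the Sobolev-type trick of \eqref{3.37}. This yields, uniformly in $h$ and $r$,
\[
\sup_{t\in(t_0,T)}\int |u_{hx}|^2\rho_{r,\alpha-1/2,\epsilon}\,dx+\int_{t_0}^T\!\!\int |u_{hxx}|^2\rho'_{r,\alpha-1/2,\epsilon}\,dx\,dt\leq c(T,\alpha,\epsilon,t_0,\|u_0\|_{L_2},\|d\|_{H^1}).
\]
Fourth, let $r\to+\infty$ by monotone convergence to get the bound with $\rho_{\alpha-1/2,\epsilon}$ itself, then let $h\to+0$: weak-$*$ compactness in $L_\infty(t_0,T;L_2^{\rho_{\alpha-1/2,\epsilon}(x)})$ and weak compactness in $L_2(t_0,T;L_2^{\rho'_{\alpha-1/2,\epsilon}(x)})$ give a limit with the same bounds, and since $u_h\to u$ in $L_2(0,T;L_2(I_n))$ (from Theorem~\ref{T3.3}) the limit is $u$. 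Fifth, the weak continuity $u_x\in C_w([t_0,T];L_2^{\rho_{\alpha-1/2,\epsilon}(x)})$ follows from the uniform bound together with strong continuity of $u_x$ in a coarser topology, which is available because $u_{xt}$ lies in a negative-order space on each $I_n$ (as in \eqref{3.20}, \eqref{3.55} differentiated once more), so $u_x\in C([t_0,T];H^{-2}(I_n))$; interpolating the uniform weighted $L_2$-bound against this strong continuity gives weak continuity into $L_2^{\rho_{\alpha-1/2,\epsilon}(x)}$.

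The main obstacle I expect is \textbf{justifying the energy identity \eqref{2.28}--\eqref{2.29} for the limit solution $u$ at the weighted $H^1$ level}, since $u$ is only known to be a weak $L_2^\alpha$-solution and the manipulations leading to \eqref{3.46} require $u\in X(\Pi_T)$ and the weighted $H^1$-regularity one is trying to prove — a genuine circularity. The clean way around it is precisely the regularization scheme above: derive everything for $u_h$, where Theorem~\ref{T3.2} guarantees membership in $X(\Pi_T)\cap C_w([0,T];H^{1,\alpha})$ so all identities are rigorous, keep every constant independent of $h$, and only then pass to the limit. A secondary technical point is verifying that the weight $\rho_{\alpha-1/2,\epsilon}$ satisfies the comparison $\rho_{\alpha-1/2,\epsilon}(x)\lesssim \rho'_{\alpha,\epsilon}(x)$ needed to absorb the cutoff-derivative term; this is immediate from the explicit definitions (at $+\infty$ both are $\asymp (1+x)^{2\alpha-1}$ when $\alpha>1/2$, and the borderline $\alpha=1/2$ case uses that $\rho'_{1/2,\epsilon}$ is bounded below by a positive constant on $[0,\infty)$ while $\rho_{0,\epsilon}\leq 2$), so it causes no real difficulty.
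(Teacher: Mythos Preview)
Your overall strategy coincides with the paper's: approximate $u_0$ by $u_{0h}\in H^{1,\alpha}$, work with the regular solutions $u_h\in X(\Pi_T)\cap C_w([0,T];H^{1,\alpha})$ from Theorem~\ref{T3.2}, multiply the combined energy identity \eqref{3.46} by a time cutoff (the paper uses $\eta^2(2t/t_0-1)$), choose the truncated spatial weight $\psi=\rho_{r,\alpha-1/2,\epsilon}$, absorb the cutoff-derivative term via $\rho_{\alpha-1/2,\epsilon}\lesssim \rho'_{\alpha,\epsilon}$ and the known bound \eqref{3.54}, then let $r\to\infty$ and $h\to 0$. That is exactly how the paper proceeds.

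There is, however, a real gap in your treatment of the nonlinear terms. You propose to bound $\int |u_h|^2|u_{hxx}|\psi'$ via \eqref{3.47}, but \eqref{3.47} is the consequence of the $H^1$ estimate \eqref{3.24}, whose constant depends on $\|u_{0h}\|_{H^1}$; since $u_0$ is only in $L_2^\alpha$, this blows up as $h\to 0$ and the bound is \emph{not} uniform in $h$. More seriously, you do not address the cubic term $\int |u_h|\,|u_{hx}|^2\,\psi$ (arising from the $(\lambda-a\beta)$ piece in \eqref{3.46}) with the \emph{untruncated-in-growth} weight $\psi\sim\rho_{\alpha-1/2,\epsilon}$, and this is precisely where the hypothesis $\alpha\geq 1/2$ is used in an essential way. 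The paper (writing $v=u_h\,\eta(2t/t_0-1)$, $\rho=\rho_{\alpha-1/2,\epsilon}$) handles it by
\[
\int |u_h|\,|v_x|^2\,\rho\,dx \le \sup_{x}\bigl(|v_x|\,(\rho'\rho)^{1/4}\bigr)\Bigl(\int |v_x|^2\rho\,dx\int |u_h|^2(\rho/\rho')^{1/2}\,dx\Bigr)^{1/2},
\]
then uses $\rho/\rho'\le c(1+x_+)^{2}$ so that $\int |u_h|^2(\rho/\rho')^{1/2}\le c\int (1+x_+)|u_h|^2$, which is uniformly bounded exactly because $\alpha\ge 1/2$ and $u_h$ is uniformly in $L_2^\alpha$. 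The remaining factor $\sup_x|v_x|(\rho'\rho)^{1/4}$ is controlled by a Sobolev-type inequality as in \eqref{3.37}, producing $\varepsilon\int|v_{xx}|^2\rho' + c(\varepsilon)\int|v_x|^2\rho$, which closes via Gronwall. The term $\int |u_h v\,v_{xx}|\rho'$ is similarly treated with $\sup_x|v|\sqrt{\rho'}$ and the $L_2$-norm of $u_h$, again avoiding any $L_\infty$ bound on $u_h$. Once you replace your appeal to \eqref{3.47} by these weighted Sobolev estimates, your argument goes through and matches the paper's.
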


\begin{proof}
Approximate the function $u_0$ by functions $u_{0h} \in H^{1,\alpha}$, let $\beta_{h}$ be the same as in the proof of Theorem~\ref{T3.3} and consider the corresponding solutions $u_h(t,x)$ to problem \eqref{1.1}, \eqref{1.2} constructed in Theorem~\ref{T3.2}: $u_h \in X(\Pi_T) \cap C_w([0,T];H^{1,\alpha})$, $u_{xx} \in L_2(0,T;L_2^{\rho_{\alpha-1/2,\epsilon}(x)})$ $\forall T>0$, $\forall \epsilon>0$. Estimate \eqref{3.54} is verified uniformly with respect to $h$. Fix $t_0 \in (0,T)$. In the consequent argument temporarily drop the index $h$.

Let $\rho(x) \equiv \rho_{\alpha-1/2,\epsilon}(x)$, for $r>0$ let $\psi(x) \equiv \rho_{r,\alpha-1/2,\epsilon}(x)$. Write down corresponding equality \eqref{3.46} and multiply it by $\eta^2(2t/t_0-1)$. Let $v(t,x) \equiv u(t,x) \eta(2t/t_0 -1)$ (for simplicity in the consequent argument $\varphi \equiv \eta(2t/t_0 -1)$). Note that
$$
\varphi^2 \frac{d}{dt} \int |u_x|^2 \psi \,dx  = \frac{d}{dt} \int |v_x|^2 \psi \,dx -2 \int |u_x|^2 \psi \varphi'\varphi \,dx,
$$
$$
\varphi^2 \frac{d}{dt} \int |u|^3\psi \,dx = \frac{d}{dt} \int |u v^2| \psi \,dx - 2 \int |u|^3 \psi \varphi'\varphi \,dx.
$$
Then 
\begin{multline}\label{5.7}
\frac{d}{dt} \int \left[ |v_x|^2 - \frac{2\beta}{3} |uv^2|\right] \psi\,dx + 3 \int |v_{xx}|^2\psi' \,dx - 2a \Im \int v_{xx} \bar v_x \psi' \,dx \\ -
\int |v_x|^2 \psi''' \,dx - b \int |v_x|^2 \psi' \,dx  + 2\bigl(\lambda - a\beta\bigr) \Im \int u |v|_x \bar v_x \psi\,dx \\ -
2a\beta \Im \int |u| v \bar v_x \psi' \,dx - 4 \beta \int |u| |v_x|^2 \psi' \,dx + \beta \int |u| \bigl(|v|^2\bigr)_{xx} \psi' \,dx \\ - 
\beta \int |v|_x |uv|_x \psi' \,dx + \frac{2 b \beta}{3} \int |uv^2| \psi' \,dx + \beta^2 \int |u^2v^2| \psi' \,dx \\ + 
2\int d(x) |v_x|^2 \psi \,dx + 2\Re \int d'(x) v \bar v_x \psi \,dx - 2\beta \int d(x)|uv^2| \psi\,dx \\ -
2\int |u_x|^2 \psi \varphi'\varphi \,dx  + \frac{4\beta}{3}\int |u|^3 \psi \varphi'\varphi \,dx=0.
\end{multline}
Integrate this equality with respect to $t$, pass to the limit when $r\to +\infty$ and again differentiate with respect to $t$, then for a.e. $t\in (0,T)$ equality \eqref{5.7} holds, where $\psi(x)$ is substituted by $\rho(x)$. Uniformly with respect to $h$ similarly to \eqref{3.36}--\eqref{3.38}
$$
\int |uv^2|\rho\,dx \leq \sup\limits_{x\in \mathbb R} \bigl( |v| \sqrt{\rho}\bigr) \Bigl( \int |u|^2 \,dx  \int |v|^2 \rho \,dx\Bigr)^{1/2}  \leq 
\varepsilon \int |v_x|^2 \rho \,dx + c(\varepsilon),
$$
where $\varepsilon>0$ can be chosen arbitrarily small. Next, note that $\rho(x)/\rho'(x) \leq c (1+x_+)^{2}$ (in fact, it is obvious for $x\leq 0$, while for $x\geq 0$ if $\alpha>0$ then $\rho(x)/\rho'(x) = c(1+x)$  and if $\alpha=0$ then $\rho(x)/\rho'(x) \leq c(\varepsilon)(1+x)^{1+\varepsilon}$ for any $\varepsilon>0$). Therefore, since $\alpha \geq 1/2$
(here and further uniformly with respect to $h$)
\begin{multline*}
\int |u v_x^2| \rho \,dx \leq \sup\limits_{x\in \mathbb R} \bigl( |v_x| (\rho'\rho)^{1/4}\bigr) \Bigl(\int |v_x|^2 \rho \,dx \int |u|^2 (\rho/\rho')^{1/2} \,dx \Bigr)^{1/2} \\ \leq c \Bigl( \int \bigl(|v_{xx}|^2\rho' + |v_x|^2 \rho \bigr) \,dx \Bigr)^{1/4} \Bigr(\int |v_x|^2 \rho \,dx \Bigr)^{1/2} \Bigl(\int (1+x_+) |u|^2 \,dx \Bigr)^{1/2} \\ \leq 
\varepsilon \int |v_{xx}|^2 \rho' \,dx + c(\varepsilon) \int |v_x|^2 \rho \,dx,
\end{multline*}  
where $\varepsilon>0$ can be chosen arbitrarily small. Next,
\begin{multline*}
\int |u v v_{xx}| \rho' \,dx \leq \sup\limits_{x\in \mathbb R} \bigl( |v| \sqrt{\rho'}\bigr) \bigl(\int |u|^2 \,dx \int |v_{xx}|^2 \rho'\,dx \Bigr)^{1/2} \\ \leq
\varepsilon \int |v_{xx}|^2 \rho' \,dx + c(\varepsilon) \int |v_x|^2 \rho \,dx +c(\varepsilon),
\end{multline*}
where $\varepsilon>0$ also can be chosen arbitrarily small. Finally,
$$
\iint_{\Pi_T} |u_x|^2 \rho \, dx dt  \leq c \iint_{\Pi_T} |u_x|^2 \rho'_{\alpha,\epsilon} \,dx dt \leq c_1,
$$
\begin{multline*}
\iint_{\Pi_T} |u|^3 \rho \,dx dt \leq \int_0^T \sup\limits_{x\in \mathbb R} \bigl( |u| \sqrt{\rho}\bigr) \Bigl( \int |u|^2 \,dx  \int |u|^2 \rho \,dx\Bigr)^{1/2} dt \\ \leq  c \iint_{\Pi_T}  |u_x|^2 \rho \,dx dt + c \leq c_1.
\end{multline*}
As a result, since $\eta(2t/t_0-1) =1$ for $t\geq t_0$ we find that uniformly with respect to $h$
\begin{equation}\label{5.8}
\|u_{hx}\|_{L_\infty(t_0,T;L_2^{\rho_{\alpha-1/2,\epsilon}(x)})} + \|u_{hxx}\|_{L_2(t_0,T;L_2^{\rho'_{\alpha-1/2,\epsilon}(x)})} \leq c(T,t_0).
\end{equation}
Application of the closure finishes the proof.
\end{proof}

\begin{theorem}\label{T5.3}
Let $u_0 \in L_2^\alpha$ for certain $\alpha>3/4$, $d\in H^1$, $u\in L_\infty(0,T;L_2^\alpha)$ be a solution to problem \eqref{1.1}, \eqref{1.2} for certain $T>0$. Then (after possible correction on a zero measure set) $u, u_x\in C_b(\overline{\Pi}_T^{t_0, x_0})$ for any $t_0\in (0,T)$, $x_0 \in\mathbb R$.
\end{theorem}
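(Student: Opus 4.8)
The plan is to deduce the statement for $u$ from Theorem~\ref{T5.1} (applicable since $\alpha>3/4>1/4$ and $d\in H^1\subset L_\infty$) and to concentrate on the continuity and boundedness of $u_x$ on $\overline{\Pi}_T^{t_0,x_0}$. Since $\alpha>3/4$, Theorem~\ref{T4.1} shows that $u$ coincides with the solution constructed in Theorem~\ref{T3.3}, so the interior smoothing of Theorem~\ref{T5.2} applies: for every $t_1\in(0,T)$ and $\epsilon>0$, $u_x\in C_w([t_1,T];L_2^{\rho_{\alpha-1/2,\epsilon}(x)})$, hence $\|u_x(\tau,\cdot)\|_{L_2^{\rho_{\alpha-1/2,\epsilon}(x)}}\le c(t_1)$ uniformly in $\tau\in[t_1,T]$; I would also use $u\in L_\infty(0,T;L_2^\alpha)$ and $\sigma(u;T)<\infty$ from Theorem~\ref{T3.3}, and the boundedness of $u$ on half-strips from Theorem~\ref{T5.1}.

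Exactly as in the proof of Theorem~\ref{T5.1}, viewing $u$ as a solution of the linear problem with $f_{11}\equiv-id(x)u$, $f_{12}\equiv-\lambda|u|u$, $f_2\equiv-i\beta|u|u$ and combining Theorem~\ref{T2.1} with uniqueness of generalized solutions (Lemma~\ref{L2.2}), one has representation \eqref{2.7} for $u$. Fixing $t_0\in(0,T)$, $x_0\in\mathbb R$ and $t_1\in(0,t_0/2)$, I would differentiate \eqref{2.7} in $x$ for $t\ge t_0$, treating the $f_2$-term with care: on the time interval $(0,t_1)$ differentiate directly, obtaining $-i\int_0^{t_1}\!\!\int G_{xx}(t-\tau,x-y)f_2(\tau,y)\,dy\,d\tau$ (legitimate since there $t-\tau\ge t_0/2>0$), while on $(t_1,t)$ first integrate by parts in $y$ and then differentiate, obtaining $-\beta\int_{t_1}^{t}\!\!\int G_x(t-\tau,x-y)(|u|u)_y(\tau,y)\,dy\,d\tau$ (the identity $\int G_x(s,x-y)(|u|u)_y\,dy=\int G_{xx}(s,x-y)|u|u\,dy$ being justified by approximation of $u$ by smooth functions). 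This represents $u_x(t,x)$ as a sum of five integrals: $\int G_x(t,x-y)u_0(y)\,dy$, the $G_x$-convolutions of $f_{11}$ and $f_{12}$, the $\int_0^{t_1}G_{xx}$-integral above, and the $\int_{t_1}^{t}G_x$-integral involving $(|u|u)_y$.

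Each of the five integrals I would estimate on $\{t\ge t_0,\ x\ge x_0\}$ from the bounds of Appendix~\ref{A} on $G_x$ and $G_{xx}$, in the spirit of \eqref{2.9}--\eqref{2.12} and \eqref{5.4}--\eqref{5.6}. Writing $(1+y-x)^{1/4}=(1+y-x)^{-1/2-\varepsilon}(1+y-x)^{3/4+\varepsilon}$ and $(1+y-x)\le(1+y_+)(1+x_-)$, the first integral is bounded by $c(t_0,\varepsilon)(1+x_-)^{3/4+\varepsilon}\|u_0\|_{L_2^{3/4+\varepsilon}}$ for any $\varepsilon<\alpha-3/4$ --- this is exactly where the hypothesis $\alpha>3/4$ is used --- hence bounded for $x\ge x_0$. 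The $f_{11}$- and $f_{12}$-integrals are controlled as in \eqref{5.4}--\eqref{5.5}, since $|u|^2\in L_\infty(0,T;L_1^{1/4})$ and $d(x)u\in L_\infty(0,T;L_1^{1/4})$ (using $d\in L_2$); the time singularity $(t-\tau)^{-3/4}$ of $G_x$ is integrable and these forcings are bounded in $\tau$, so the results are bounded in $t$. In the $\int_0^{t_1}G_{xx}$-integral one has $t-\tau\ge t_0/2>0$, so $G_{xx}(t-\tau,\cdot)$ carries no time singularity and only $|u|^2\in L_\infty(0,T;L_1^\alpha)$ is used. In the $\int_{t_1}^{t}G_x$-integral the singularity $(t-\tau)^{-3/4}$ is again integrable, and $|(|u|u)_y|\le2|u||u_x|$ (Appendix~\ref{B}) is estimated by the Cauchy--Schwarz inequality: for $y>x$ from $u\in L_\infty(0,T;L_2^\alpha)$ together with the uniform-in-$\tau$ bound $u_x(\tau,\cdot)\in L_2^{\rho_{\alpha-1/2,\epsilon}(x)}$ (here $\alpha\ge3/4$ is precisely what makes the kernel weight $(1+y-x)^{1/2}$ dominated by $\rho_{\alpha-1/2,\epsilon}(y)$ for large $y$), and for $y<x$ from the super-exponential decay of the kernel combined with the exponential weight $e^{2\epsilon x}$ that $\rho_{\alpha-1/2,\epsilon}$ carries at $-\infty$ and with $\sigma(u;T)<\infty$. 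All five integrals are moreover continuous in $(t,x)$ for $t\ge t_0$, as in the proof of Theorem~\ref{T5.1}, whence $u_x\in C_b(\overline{\Pi}_T^{t_0,x_0})$; since $t_0$ and $x_0$ are arbitrary, together with Theorem~\ref{T5.1} this completes the argument.

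The hardest part is the term generated by $(|u|u)_x$: it cannot be placed in a single slot of Theorem~\ref{T2.1} uniformly on $(0,t)$ under the present hypotheses, and the time splitting at $t_1$ is what saves it --- near $\tau=0$ one trades the (harmless, because $t-\tau$ stays bounded below) second kernel derivative against no derivative on $|u|u$, while near $\tau=t$ one invokes the interior regularity of Theorem~\ref{T5.2} to control $u_x$. The only other delicate point is the behaviour as $x\to-\infty$, where $u$ is merely square integrable and $u_x$ is not globally in $L_2$; here one must use both the super-exponential decay of the $G$- and $G_x$-kernels on that side and the exponential weight furnished by Theorem~\ref{T5.2}. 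The non-smoothness of the nonlinearity enters only through the a.e.\ inequality $|(|u|u)_x|\le2|u||u_x|$ from Appendix~\ref{B}, so no additional regularization is required.
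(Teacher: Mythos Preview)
Your argument is correct and reaches the same conclusion, but it follows a genuinely different route from the paper. The paper does not differentiate the representation~\eqref{2.7} for $u$ directly; instead it introduces a spatially and temporally localized function $v(t,x)\equiv u_x(t,x)\,\eta(x-x_0+2+b_+T)\,\eta(2t/t_0-1)$, verifies that $v$ is a generalized solution of the linear problem~\eqref{2.1}--\eqref{2.2} with \emph{zero} initial data and a right-hand side $f_1+f_{2x}$ built from commutator terms, and then applies Theorem~\ref{T2.1} to $v$ itself. The spatial cutoff is chosen so that every term carrying two $x$-derivatives of $u$ (in particular $u_{xx}\eta'$) lives on $\supp\eta'\subset[x_0-2-b_+T,\,x_0-1-b_+T]$; thus for $x\ge x_0$ the argument $x-y$ of the kernel stays $\ge1+b_+T$, and the super-exponential bound~\eqref{A.27} removes the time singularity entirely for those terms. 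The remaining terms fit the $L_\infty(L_2^{\alpha-1/2})$ or $L_\infty(L_1^\alpha)$ slots and are handled as in~\eqref{5.4},~\eqref{5.6}.

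Your time-splitting trick plays the role that the paper's spatial cutoff plays: both are devices for avoiding the non-integrable singularity $(t-\tau)^{-5/4}$ that would arise from $G_{xx}$ near $\tau=t$. The paper trades space localization against exponential kernel decay; you trade time localization against the interior regularity of Theorem~\ref{T5.2} on $[t_1,t]$. Your approach is more elementary in that it stays with the single representation for $u$ and avoids computing the linear equation satisfied by a cutoff of $u_x$; the paper's approach is cleaner in that it invokes Theorem~\ref{T2.1} once for $v$ with data that manifestly satisfy the hypotheses, sidestepping the need to justify differentiation under the integral sign and the integration-by-parts identity $\int G_x(s,x-y)(|u|u)_y\,dy=\int G_{xx}(s,x-y)|u|u\,dy$ separately. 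Both routes rely on the same two inputs --- Theorem~\ref{T5.2} for the weighted $L_2$ control of $u_x$ (and, in the paper, of $u_{xx}$) past a positive time, and the pointwise kernel bounds of Lemma~\ref{LA.2} --- so neither is essentially stronger than the other.
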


\begin{proof}
The considered solution lies in the class of uniqueness, therefore, according to Theorem~\ref{T5.2} $u\in C_w([0,T]; L_2^\alpha)$, $u_x \in C_w([t_0,T]; L_2^{\rho_{\alpha-1/2,\epsilon}(x)})$, $u_{xx} \in L_2(t_0,T;L_2^{\rho'_{\alpha-1/2,\epsilon}(x)})$ for any $t_0\in (0,T)$, $\epsilon>0$.

Fix $t_0\in (0,T)$, $x_0\in \mathbb R$. Let $v(t,x) \equiv u_x(t,x) \eta(x-x_0+2+ b_+T) \eta(2t/t_0-1)$ (in the consequent argument for simplicity $\eta \equiv \eta(x-x_0+2 +b_{+}T), \varphi \equiv \eta(2t/t_0 -1)$), then the function $v\in C_w([0,T];L_2^{\alpha-1/2})$ is a weak solution to problem \eqref{2.1}, \eqref{2.2} for $u_0 \equiv 0$ and $f\equiv f_1 +f_{2x}$, where
\begin{multline}\label{5.9}
f_1 = f_{11} \equiv i u_x \eta \varphi' + a (2u_{xx} \eta' + u_x \eta'')\varphi + i b u_x \eta' \varphi + i u_x \eta''' \varphi \\ +
\lambda |u| u \eta' \varphi + i \beta \bigl(|u| u \bigr)_x \eta' \varphi - i d(x) u \eta' \varphi,
\end{multline}
\begin{equation}\label{5.10}
f_2 \equiv - \bigl[ \lambda |u|u + i \beta \bigl( |u|u\bigr)_x + i d(x) u \bigr] \eta \varphi + 3 i u_{xx} \eta' \varphi.
\end{equation}
Note that $\supp \eta' = [x_0-2 -b_{+}T, x_0-1 -b_{+}T]$, $\supp \varphi = [t_0/2, +\infty)$. Then $f_1 \in L_2(0,T; L_2^{\alpha-1/2})$. Moreover, $f_2 \in L_2(0,T; L_1^{\alpha})$, since for $t\geq t_0$
\begin{multline*}
\int |u_x u| (1+x_+)^{\alpha} \eta \,dx \leq \int |u_x u| (1+x_+)^{2\alpha - 1/2} \eta \,dx  \\\leq 
\Bigl( \int_{x_0-2-b_+T}^{+\infty} |u_x|^2 (1+x_+)^{2\alpha-1} \,dx \int |u|^2 (1+x_+)^{2\alpha} \,dx \Bigr)^{1/2}\ \leq c(t_0, x_0),
\end{multline*}
$$
\int |d(x) u| (1+x_+)^{\alpha} \eta \,dx \leq \Bigl( \int |d(x)|^2 \,dx \int |u|^2 (1+x_+)^{2\alpha} \,dx \Bigr)^{1/2} \leq c(T).
$$
Therefore, since $\alpha-1/2 >1/4$, $\alpha \geq 1/4$, Theorem~\ref{T2.1} can be applied. In particular, for $t\in [t_0,T]$, $x\geq x_0$
\begin{multline}\label{5.11}
u_x(t,x) = -i \int_0^t \!\! \int G(t-\tau,x-y) f_1(\tau,y)\, dy d\tau \\ -i
\int_0^t\!\! \int G_x(t-\tau, x-y) f_2(\tau,y)\, dy d\tau.
\end{multline}
Note that if $y \in \supp \eta'$ and $x\geq x_0$ then $x-y \geq 1 + b_{+}T$. Then according to \eqref{A.27}
$$
|\partial_x^n G(t-\tau, x-y|) \leq \frac{c(T,n)}{t^{(n+1)/3}} e^{-c_0 t^{-1/2}} \leq c_1(T,n).
$$
Therefore,
\begin{multline*}
\Bigl| \int_0^t \!\! \int \partial^n_x G(t-\tau,x-y) u_{yy} \eta' \varphi \,dy d\tau \Bigr| \leq c(T,n) \int_{t_0/2}^T \int_{x_0-2-b_{+}T}^{x_0} |u_{yy}| \, dy d\tau  \\ \leq c_1(T,n,t_0,x_0).
\end{multline*}
It is easy to see that other terms in the right-hand side of \eqref{5.9} belong to $L_\infty(0,T;L_2^{\alpha-1/2})$ and can be estimated similarly to \eqref{5.4}, other terms in the right-hand side of \eqref{5.10} belong to $L_\infty(0,T;L_1^\alpha)$ and can be estimated similarly to \eqref{5.6}. As a result,
$$
\sup\limits_{(t,x) \in [t_0,T]\times [x_0,+\infty)} |u_x(t,x)| < \infty.
$$
Moreover the integrals in the right-hand side of \eqref{5.11} are continuous with respect to $(t,x)$. Taking into account Theorem \ref{T5.1}, we finish the proof.
\end{proof}

\section{Large-time decay}\label{S6}

The prehistory of the methods used in this section see, for example, in \cite{CCFSV, CDFN}.

\begin{lemma}\label{L6.1}
Let the function $d(x)\in H^1$ satisfies Condition~A for given $\alpha_0$ and $R_0$. Then for any $T>0$ and $M>0$ there exists a positive
constant $c_0=c_0(T,M,\alpha_0,R_0,\|d\|_{L_\infty},\beta)$ such that if $u_0\in H^1$ and $\|u_0\|_{L_2}\leq M$ then for a solution $u(t,x)$ 
to problem \eqref{1.1}, \eqref{1.2} from the class $X(\Pi_T)$ the following inequality holds:
\begin{equation}\label{6.1}
\int_{0}^{T}\!\!\int_{-R_0}^{R_0} u^2\,dxdt\leq c_0\iint_{\Pi_T} d(x)u^2\,dxdt.
\end{equation}
\end{lemma}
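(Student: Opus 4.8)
The plan is to argue by contradiction via a compactness--uniqueness scheme, in the spirit of \cite{CCFSV, CDFN}, reducing \eqref{6.1} to a unique continuation property. Suppose \eqref{6.1} fails for the given $T$ and $M$. Then for every $n\in\mathbb N$ there is a solution $u_n\in X(\Pi_T)$ to \eqref{1.1}, \eqref{1.2} with $\|u_{0n}\|_{L_2}\le M$ and
\[
a_n^2:=\int_0^T\!\!\int_{-R_0}^{R_0}|u_n|^2\,dxdt>n\iint_{\Pi_T}d(x)|u_n|^2\,dxdt\ge 0,
\]
so $a_n>0$ and $\iint_{\Pi_T}d(x)|u_n|^2\,dxdt<a_n^2/n\to 0$. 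The conservation law \eqref{3.22} gives $\|u_n(t,\cdot)\|_{L_2}^2=\|u_{0n}\|_{L_2}^2-2\iint_{\Pi_t}d|u_n|^2$, hence $\|u_n(t,\cdot)\|_{L_2}$ is non-increasing; combining this with $d\ge d_0$ on $\{|x|\ge R_0\}$ and with $a_n^2\le\int_0^T\|u_n(t,\cdot)\|_{L_2}^2\,dt$ one checks that, for large $n$, $\|u_{0n}\|_{L_2}$ and $a_n$ are comparable up to a constant depending only on $T$ and on $d_0$ from Condition~A. Therefore the rescaled functions $w_n:=u_n/a_n$, which solve \eqref{1.1} with $\lambda,\beta$ replaced by $\lambda a_n,\beta a_n$ and the unchanged damping term $id(x)w_n$, satisfy, uniformly in $n$, $\|w_n\|_{L_\infty(0,T;L_2)}\le c$, $\int_0^T\int_{-R_0}^{R_0}|w_n|^2\,dxdt=1$, $\iint_{\Pi_T}d(x)|w_n|^2\,dxdt\to 0$, and, by Remark~\ref{R3.1} (since $|\lambda a_n|,|\beta a_n|$ are bounded), $\sigma(w_n;T)\le c$.

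Next I extract a convergent subsequence. The bounds just listed give $\{w_n\}$ bounded in $L_\infty(0,T;L_2)$ and $\{w_{nx}\}$ bounded in $L_2(0,T;L_2(I))$ for every bounded interval $I$, while the equation itself shows $\{w_{nt}\}$ bounded in $L_1(0,T;H^{-3}(I))$. By a standard compactness argument (Aubin--Lions--Simon) a subsequence, not relabelled, converges to some $w$: $\ast$-weakly in $L_\infty(0,T;L_2)$, weakly in $L_2(0,T;H^1(I))$, and strongly in $L_2(0,T;L_2(I))$ for every bounded $I$. Passing in addition to a subsequence along which $a_n\to a_\infty\in[0,\sqrt T M]$, the strong local convergence together with $\bigl||w_n|w_n-|w|w\bigr|\le c\,(|w_n|+|w|)|w_n-w|$ and the $L_\infty(0,T;L_2)$ bound lets me pass to the limit in the quadratic terms exactly as in the proof of Theorem~\ref{T3.3}: $w$ is a weak solution of \eqref{1.1} with constants $\lambda a_\infty,\beta a_\infty$ (the nonlinear terms dropping out if $a_\infty=0$) and the same damping $id(x)w$. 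Finally, $d\ge d_0$ on $\{|x|\ge R_0\}$ and $\iint_{\Pi_T}d|w_n|^2\to 0$ force $w_n\to 0$ in $L_2\bigl(0,T;L_2(\{|x|\ge R_0\})\bigr)$, so $w\equiv 0$ on $\bigl((-\infty,-R_0)\cup(R_0,+\infty)\bigr)\times(0,T)$, whereas $\int_0^T\int_{-R_0}^{R_0}|w_n|^2\,dxdt=1$ passes to the strong limit on $(-R_0,R_0)$, giving $\int_0^T\int_{-R_0}^{R_0}|w|^2\,dxdt=1$, so $w\not\equiv 0$.

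It remains to rule this out, and here lies the main obstacle: one needs a unique continuation property for \eqref{1.1} (for arbitrary real constants, which also covers the limiting equation and, when $a_\infty=0$, the purely linear operator $\EuScript L$ with the bounded damping term) asserting that a solution of the above regularity vanishing on the open half-line $(R_0,+\infty)$ for all $t\in(0,T)$ must vanish identically. For the linear, third-order dispersive operator $\EuScript L$ with a bounded zeroth-order term this is classical; for \eqref{1.1} it should be obtained along the lines of \cite{CP} (which treats $p=2$), the difficulty being that $|u|u$ is only Lipschitz, not $C^2$, so the Carleman estimates behind such results do not apply verbatim. The mitigating points are that $\bigl||u|u\bigr|\le|u|^2$, so after moving the nonlinearity to the right-hand side it becomes a perturbation of order zero with coefficient $|u|$ bounded locally in $L_\infty$ — the regularity those Carleman arguments actually require — and that on the region from which one propagates the solution already vanishes, so there the nonlinear term is literally absent and one works with $\EuScript L$ plus a bounded potential. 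Once this step is in place, $w\equiv 0$ contradicts $w\not\equiv 0$, which proves \eqref{6.1}; inspecting the argument shows that the resulting $c_0$ depends only on the data listed in the statement.
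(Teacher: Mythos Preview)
Your compactness--uniqueness skeleton is exactly the paper's: contradiction, rescale by the $L_2((0,T)\times(-R_0,R_0))$ norm, use the uniform a~priori bounds (in the paper via \eqref{3.22} and \eqref{3.57}, just as you invoke Remark~\ref{R3.1}), extract a limit $w$ that vanishes for $|x|>R_0$ but has $\int_0^T\!\int_{-R_0}^{R_0}|w|^2=1$, and kill it by unique continuation. One cosmetic difference: the paper lets $d$ and $\beta$ vary along the sequence (to make the constant uniform for the later approximation argument in Theorem~\ref{T6.2}), and it observes that the damping term drops out of the limit equation because $\iint d_k|v_k|^2\to 0$; in your version $d$ is fixed and the term $id(x)w$ survives formally, but since $\iint d|w|^2=0$ and $d\ge 0$ you have $dw\equiv 0$ anyway, so the limit equations coincide.

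The genuine gap is the last step. You concede that unique continuation is ``the main obstacle'' and offer only a sketch: treat $|w|w$ as a zeroth-order perturbation with coefficient $|w|\in L^\infty_{\mathrm{loc}}$ and appeal to Carleman estimates. But nothing in your compactness argument yields $w\in L^\infty_{\mathrm{loc}}$; you only know $w\in L_\infty(0,T;L_2)$ and $w_x\in L_2(0,T;L_2(I))$ locally, which is not enough. The paper closes this gap by a route that is not visible from your write-up: since $w$ is compactly supported in $x$, automatically $w\in L_\infty(0,T;L_2^{3/4})$, i.e.\ $w$ lies in the uniqueness class of Theorem~\ref{T4.1} and hence coincides with the solution produced by Theorem~\ref{T3.3}. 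This lets the paper invoke the internal regularity Theorems~\ref{T5.2} and~\ref{T5.3} to obtain $w,w_x\in L^\infty_{\mathrm{loc}}$ (and $w_{xx}\in L_2$ locally) for $t\ge t_0>0$. With that regularity in hand, the nonlinearity is rewritten via Lemma~\ref{LB.3} as $(|w|w)_x=\tfrac12 P(w,w_x)$ with $P$ satisfying the Lipschitz bound \eqref{B.9}, and then a concrete unique continuation theorem of Shananin \cite[Theorem~6]{Sh} applies directly to give $w\equiv 0$. So the missing ingredients are: (i) the observation that compact support in $x$ places the limit in the weighted uniqueness class, (ii) the internal regularity theorems of Section~\ref{S5}, and (iii) a specific unique continuation result tailored to this class of equations, rather than an allusion to \cite{CP} (which treats $p=2$) or to generic Carleman estimates.
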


\begin{proof}
We argue by contradiction. Let us suppose that \eqref{6.1} is not verified and let $\{u_{0k}(x)\}_{k\in\mathbb{N}}$, $\{d_k(x)\}_{k\in\mathbb{N}}$, $\{\beta_k\}_{k\in \mathbb{N}}$ be sequences of
initial data from $H^1$ bounded in $L_2$, damping functions from $H^1$ verifying Condition A uniformly with respect to $k$ bounded in $L_{\infty}$  and uniformly bounded coefficients, where the corresponding solutions $\{u_k(t,x)\}_{k\in\mathbb{N}}$ verify
\begin{equation}\label{6.2}
\displaystyle\lim_{k \to+\infty}
\frac{\displaystyle \iint_{\Pi_T} d_k(x) u_k^2\,dxdt}{\displaystyle \int_0^T\!\!\int_{-R_0}^{R_0} u_k^2\,dxdt} =0.
\end{equation}
Define
$$
\nu_k= \|u_k\|_{L_2((0,T) \times (-R_0,R_0))},\qquad v_k(t,x)\equiv \frac{u_k(t,x)}{\nu_k}.
$$
Then
\begin{equation}\label{6.3}
\|v_k\|_{L_2((0,T) \times (-R_0,R_0))}=1 \qquad \forall k\in \mathbb N
\end{equation}
and $v_k$ is a solution from $X(\Pi_T)$ to a problem
\begin{equation}\label{6.4}
i v_{kt} +a v_{kxx} +i b v_{kx} +i v_{kxxx}+ \lambda\nu_k |v_k| v_k + i\beta_{k} \nu_k \bigl(|v_k| v_{k}\bigr)_x  +id_k(x)v_k =0,
\end{equation}
\begin{equation}\label{6.5}
v_k(0,x)=v_{0k}(x)\equiv \frac{u_{0k}(x)}{\nu_k}.
\end{equation}
Note that by virtue of \eqref{3.22}
\begin{equation}\label{6.6}
\nu_k\leq T^{1/2} M
\end{equation}
and it follows from \eqref{6.2} that
\begin{equation}\label{6.7}
\lim_{k\to +\infty}\iint_{\Pi_T} d_k(x) v_k^2\,dxdt=0.
\end{equation}
Next, we show that $\{v_{0k}(x)\}_{k\in\mathbb{N}}$ is bounded in $L_2$.
Indeed, properties of the functions $d_k(x)$ yield that
$$
\iint_{\Pi_T} u_k^2\,dxdt \leq
\frac 1{\alpha_0} \iint_{\Pi_T} d_k(x) u_k^2\,dxdt+ \int_0^T\!\!\int_{-R_0}^{R_0} u_k^2\,dxdt,
$$
so it succeeds from \eqref{3.22} that
\begin{multline*}
\int u_{0k}^2\,dx \leq \frac 1T \iint_{\Pi_T} u_k^2\,dxdt+
2\iint_{\Pi_T} d_k(x) u_k^2\,dxdt \\ \leq
\left(2+\frac 1{\alpha_0T}\right)\iint_{\Pi_T} d_k(x) u_k^2\,dxdt
+ \frac 1T \int_0^T\!\!\int_{-R_0}^{R_0} u_k^2\,dxdt
\end{multline*}
and, therefore,
$$
\int v_{0k}^2\,dxdt \leq
\left(2+\frac 1{\alpha_0 T}\right)\iint_{\Pi_T} d_k(x) v_k^2\,dxdt+\frac 1T,
$$
whence with the use of \eqref{6.7} the desired result on boundedness of
$\{v_{0k}\}_{k\in\mathbb{N}}$ follows.

Then according to \eqref{3.57} and \eqref{6.6} we deduce that
\begin{equation}\label{6.8}
\{v_{k}\}_{k\in\mathbb{N}} \quad \text{is bounded in}\quad L_\infty(0,T;L_2)\cap
L_2(0,T;H^1(I_n))\quad \forall n\in \mathbb N, \ I_n =(-n,n).
\end{equation}
Since $ L_1 (I_n)  \hookrightarrow H^{-1}(I_n)$ it follows from \eqref{6.8} that
$$
\{|v_{k}|v_{k}\}_{k\in\mathbb{N}} \quad \text{is bounded in}\quad
L_2(0,T;H^{-1}(I_n))\quad \forall n\in\mathbb N
$$
and then with the use of equation \eqref{6.4} itself, we obtain that
$$
\{v_{kt}\}_{k\in\mathbb{N}} \quad \text{is bounded in}\quad
L_2(0,T;H^{-2}(I_n))\quad \forall n\in\mathbb N.
$$
Passing to subsequences $\nu_k$, $\beta_{k}$, $\{v_{0k}\}$ and $\{v_{k}\}$,
still denoted by $\nu_k$, $\beta_{k}$, $\{v_{0k}\}$, $\{v_{k}\}$ from now on,
we get
\begin{gather}\notag
\nu_k \rightarrow \nu,\quad \beta_{k} \rightarrow \beta,\\
\notag
v_{0k}\rightharpoonup v_0\quad  \text{weakly in}\quad L_2,\\
\label{6.9}
v_k\rightharpoonup v\quad  *-\text{weakly in}\quad L_\infty(0,T;L_2),\\
\notag
v_k\rightharpoonup v\quad  \text{weakly in}\quad L_2(0,T;H^1(I_n))\quad \forall n\in\mathbb N
\end{gather}
and since $H^1(I_n) \overset{c}\hookrightarrow L_2(I_n) \hookrightarrow H^{-2}(I_n)$ by the standard argument that
$$
v_k\rightarrow v \quad \mbox{strongly in}\ \ L_2\bigl((0,T) \times I_n \bigr)\quad \forall n\in\mathbb N.
$$
Note that it follows from \eqref{6.7} and Condition A that $v_k\to 0$ in
$L_2\bigl((0,T) \times (\mathbb R\setminus (-R_0,R_0))\bigr)$, therefore,
\begin{equation}\label{6.10}
v_k\rightarrow v \quad \text{strongly in}\quad L_{2}(\Pi_T),
\end{equation}
where
\begin{equation}\label{6.11}
v(t,x)=0\quad\text{for}\ |x|>R_0.
\end{equation}

Let $\phi(t,x)$ be an arbitrary function such that $\phi\in C^1([0,T];H^1) \cap C([0,T];H^3)$ and
$\phi\bigr|_{t=T}=0$. For any $k$
\begin{multline*}
\iint_{\Pi_T} \bigl[ i v_k\phi_t  -a v_k \phi_{xx} +  i b v_k\varphi_x+ i v_k \phi_{xxx} -\lambda \nu_k |v_k| v_k \phi + i \beta_{k} \nu_k |v_k| v_k \phi_x  \\ -i d_k(x) v_k\phi\bigr]\,dxdt +\int v_{0k}\phi\bigr|_{t=0}\,dx=0.
\end{multline*}
Passing to the limit as $k\to +\infty$ and taking into account \eqref{6.7} and boundedness of $d_k$ in $L_\infty$ we obtain an equality
\begin{multline*}
\iint_{\Pi_T} \bigl[ i v\phi_t -a v \phi_{xx} + i b v\varphi_x  + i v \phi_{xxx}- \lambda \nu |v| v \phi  + i \beta \nu |v| v \phi_x \bigr]\,dxdt  \\+\int v_{0}\phi\bigr|_{t=0}\,dx=0.
\end{multline*}
that is, $v(t,x)$ is a weak solution to a problem
\begin{equation}\label{6.12}
i v_{t} +a_1 v_{xx}  + i b v_{x} + i v_{xxx}+ \lambda\nu |v| v + i \beta \nu \bigl(|v| v\bigr)_x =0,\quad
v(0,x)=v_{0}(x).
\end{equation}
It follows from \eqref{6.9} and \eqref{6.11} that $v\in L_\infty(0,T; L_2^{3/4})$, that is, it lies in the class of uniqueness. Then it follows from Theorem~\ref{T5.2} that $u\in L_2(t_0,T; H^2(I))$ and from Theorem~\ref{T5.3} that $u, u_x \in L_\infty((t_0,T)\times I)$ for any $t_0\in (0,T)$ and bounded interval $I\subset \mathbb R$. Note that according to \eqref{B.1}
$$
\bigl( |u| u\bigr)_x =  \frac 32 |u| u_x + \frac {u^2 \bar u_x}{2 |u|} = \frac12 P(u,u_x),
$$
where the function $P$ is defined by formula \eqref{B.8}. Then according to \eqref{B.9} equation \eqref{6.12} satisfies the hypothesis of Theorem~6 from \cite{Sh} and the unique continuation property from \cite{Sh} provides that $v(t,x) \equiv 0$ in $\Pi_T$. Thus, \eqref{6.10} yields
that $v_k\to 0$ in $L_2(\Pi_T)$, which contradicts \eqref{6.3}.
\end{proof}

\begin{theorem}\label{T6.1}
Let the function $d(x)\in H^1$ satisfies Condition~A, $\beta \ne 0$. Then there exists a constant $\gamma>0$ and for any $M>0$ there exists a constant $c(M)>0$ such that if $u_0 \in H^1$ and $\|u_0\|_{L_2} \leq M$ the corresponding function $u\in X(\Pi_T)$ $\forall T>0$ which is the solution to problem \eqref{1.1}, \eqref{1.2} in $\Pi_T$, constructed in Theorem \ref{T3.2} for $\alpha=0$, satisfies inequality \eqref{1.5}.
\end{theorem}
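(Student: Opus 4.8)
The plan is to obtain \eqref{1.5} by combining the mass balance identity of Lemma~\ref{L3.3} with the observability inequality of Lemma~\ref{L6.1} and Condition~A, and then iterating on consecutive time windows. Since $u_0\in H^1$ and $\beta\ne0$, Theorem~\ref{T3.2} with $\alpha=0$ gives the solution $u\in X(\Pi_T)$ for every $T>0$, so Lemmas~\ref{L3.3} and~\ref{L6.1} are applicable to it; moreover, because the coefficients of equation~\eqref{1.1} do not depend on $t$, for any $s\ge0$ the shifted function $u(\cdot+s,\cdot)$ is again an $X$-solution with initial datum $u(s,\cdot)\in H^1$, which is what makes the iteration possible.

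First I would note that $g(t):=\|u(t,\cdot)\|_{L_2}^2$ is non-increasing: Lemma~\ref{L3.3} gives $g(t_2)-g(t_1)=-2\int_{t_1}^{t_2}\!\!\int d(x)|u|^2\,dxd\tau\le0$ for $t_1<t_2$. Fix $T_0=1$. By Lemma~\ref{L6.1}, with $M$ the bound on $\|u_0\|_{L_2}$ and $c_0=c_0(T_0,M,\dots)$,
\[
\int_0^{T_0}\!\!\int_{-R_0}^{R_0}|u|^2\,dxdt\le c_0\iint_{\Pi_{T_0}}d(x)|u|^2\,dxdt ,
\]
and splitting the full spatial integral and using $d(x)\ge d_0$ for $|x|\ge R_0$ from Condition~A yields
\[
\iint_{\Pi_{T_0}}|u|^2\,dxdt\le\Bigl(c_0+\frac1{d_0}\Bigr)\iint_{\Pi_{T_0}}d(x)|u|^2\,dxdt .
\]
Since $\iint_{\Pi_{T_0}}|u|^2\,dxdt=\int_0^{T_0}g(t)\,dt\ge T_0\,g(T_0)$ by monotonicity and $2\iint_{\Pi_{T_0}}d(x)|u|^2\,dxdt=g(0)-g(T_0)$ by Lemma~\ref{L3.3}, one gets $2T_0\,g(T_0)\le(c_0+1/d_0)\bigl(g(0)-g(T_0)\bigr)$, hence
\[
g(T_0)\le\theta(M)\,g(0),\qquad \theta(M):=\frac{c_0+1/d_0}{2T_0+c_0+1/d_0}\in(0,1).
\]

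Next I would iterate. Applying the last inequality to the shifted solution $u(\cdot+nT_0,\cdot)$, whose initial datum satisfies $\|u(nT_0,\cdot)\|_{L_2}\le M$ by monotonicity so that the same $c_0$ may be used, gives $g((n+1)T_0)\le\theta(M)g(nT_0)$, whence $g(nT_0)\le\theta(M)^nM^2$ and, by monotonicity, $g(t)\le\theta(M)^{\lfloor t/T_0\rfloor}M^2$ for all $t\ge0$; this already yields \eqref{1.5} with an $M$-dependent rate. To make $\gamma$ independent of $M$, put $\gamma:=-\ln\theta(1)/T_0>0$, computed with the constant $c_0(T_0,1,\dots)$ corresponding to $M=1$. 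For arbitrary $M$ the previous estimate shows $g(t)\le1$ once $t\ge t_*(M)$, where $t_*(M)$ is an explicit integer multiple of $T_0$ (zero if $M\le1$); for $t\ge t_*(M)$ one applies the $M=1$ conclusion to the shifted solution starting at $t_*(M)$, obtaining $g(t)\le\theta(1)^{-1}e^{-\gamma(t-t_*(M))}$, while for $t<t_*(M)$ one simply uses $g(t)\le M^2\le M^2e^{\gamma t_*(M)}e^{-\gamma t}$. Then \eqref{1.5} holds with $c(M):=\max\bigl(M^2,\theta(1)^{-1}\bigr)e^{\gamma t_*(M)}$.

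The substantive content is entirely contained in Lemma~\ref{L6.1}, whose proof rests on a compactness argument and the unique continuation property; granting that lemma, the proof of Theorem~\ref{T6.1} is a routine iteration, the only slightly delicate point being the book-keeping that makes the decay rate $\gamma$ uniform in $M$, handled above by the two-stage argument using the autonomy of the equation and the monotonicity of $g$.
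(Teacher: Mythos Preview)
Your proof is correct and follows essentially the same route as the paper: combine the mass identity \eqref{3.22} with Condition~A and the observability inequality of Lemma~\ref{L6.1} to obtain a contraction $g(T_0)\le\theta(M)g(0)$, iterate via time-translation invariance, and then run the two-stage argument (first drive $g$ below $1$, then switch to the $M=1$ rate) to make $\gamma$ independent of $M$. The only cosmetic differences are that you fix $T_0=1$ while the paper keeps $T$ generic, and your contraction factor $\frac{c_0+1/d_0}{2T_0+c_0+1/d_0}$ is the paper's $\frac{1+c_0\alpha_0}{1+c_0\alpha_0+2T\alpha_0}$ rewritten.
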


\begin{proof}
Let $T>0$ be fixed. According to \eqref{3.22} $\|u(t,\cdot)\|_{L_2}$ does not increase with respect to $t$ and
\begin{equation}\label{6.13}
\int |u(T,x)|^2 \,dx + 2\iint_{\Pi_T} d(x) |u(t,x)|^2 \,dxdt = \int |u_0|^2 \,dx.
\end{equation}
The properties of the function $d(x)$ provide that
$$
\int |u(T,x)|^2\,dx+ 2\alpha_0\int_0^T\!\!\int_{\mathbb R \setminus (-R_0,R_0)} |u|^2\,dxdt \leq \int |u_0|^2\,dx
$$
and, consequently,
$$
\iint_{\Pi_T} |u|^2\,dxdt \leq \frac1{2\alpha_0} \int |u_0|^2\,dx - \frac1{2\alpha_0} \int |u(T,x)|^2 \,dx+ \int_0^T\!\!\int_{-R_0}^{R_0} |u|^2\,dxdt.
$$
By virtue of \eqref{6.1} if $\|u_0\|_{L_2} \leq M$, for $c_0= c_0(M)$ the last inequality can be transformed to the following one:
$$
\iint_{\Pi_T} |u|^2\,dxdt \leq \frac1{2\alpha_0} \int |u_0|^2\,dx- \frac1{2\alpha_0} \int |u(T,x)|^2 \,dx+ c_0\iint_{\Pi_T} d(x)|u|^2\,dxdt,
$$
which with the use of \eqref{6.13} and monotonicity of $\|u(t,\cdot)\|_{L_2}$ yields that
\begin{multline*}
T\int |u^2(T,x)|^2 \,dx \leq \frac1{2\alpha_0} \int  |u_0|^2 \,dx- \frac1{2\alpha_0} \int |u(T,x)|^2 \,dx \\ +
\frac{c_0}2 \Bigl(\int |u_0|^2\,dx- \int |u(T,x)|^2 \,dx\Bigr),
\end{multline*}
whence it follows that
$$
\left(T+\frac1{2\alpha_0}+\frac{c_0}2\right) \int |u(T,x)|^2 \,dx \leq \left(\frac1{2\alpha_0}+\frac{c_0}2\right) \int |u_0|^2\,dx,
$$
which is equivalent to an inequality
\begin{equation}\label{6.14}
\int |u(T,x)|^2 \,dx \leq \frac{1+ c_0\alpha_0}{1+ c_0\alpha_0 +2T\alpha_0} \int |u_0|^2 \,dx, 
\end{equation}
whence with the use of the semigroup property follows that 
\begin{equation}\label{6.15}
\|u(t,\cdot)\|_{L_2}^2 \leq c(M) e^{-\gamma(M) t} \quad \forall\ t\geq 0,
\end{equation}
where
\begin{equation}\label{6.16}
\gamma(M) = -\frac 1T \ln\frac{1+ c_0\alpha_0}{1+ c_0\alpha_0 +2T\alpha_0}>0,\quad c(M) = \left(1 + \frac{2T\alpha_0}{1 + c_0\alpha_0}\right) M^2.
\end{equation}

Define $\gamma = \gamma(1)$, that is the constant from \eqref{6.16} corresponding to $M=1$. 

For any $M>0$ according to \eqref{6.15} there exists $T_0 = T_0(M)>0$ such that $\|u(T_0,\cdot)\|_{L_2} \leq 1$ if $\|u_0\|_{L_2} \leq M$. Then for $t\geq T_0$
$$
\|u(t,\cdot)\|^2_{L_2} \leq c(1) e^{\gamma T_0} e^{-\gamma t},
$$
while for $t\in [0,T_0)$
$$
\|u(t,\cdot)\|^2_{L_2} \leq M^2 e^{\gamma T_0} e^{-\gamma t},
$$
whence \eqref{1.5} evidently follows.
\end{proof}

\begin{theorem}\label{T6.2}
Let the function $d(x)\in L_\infty$ satisfies Condition~A. Then there exists a constant $\gamma>0$ and for any $M>0$ there exists a constant $c(M)>0$ such that if $u_0 \in L_2$ and $\|u_0\|_{L_2} \leq M$ the corresponding function $u\in C_w([0,T];L_2)$ $\forall T>0$ which is the solution to problem \eqref{1.1}, \eqref{1.2} in $\Pi_T$, constructed in Theorem~\ref{T3.3} for $\alpha=0$, satisfies inequality \eqref{1.5}. 
\end{theorem}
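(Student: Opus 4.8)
The strategy is to deduce Theorem~\ref{T6.2} from Theorem~\ref{T6.1} by passing to the limit in the regularization used to construct $u$ in Theorem~\ref{T3.3}. Recall that there $u$ was obtained, along a sequence $h_k\to+0$, as the limit of solutions $u_h\in X(\Pi_T)\cap C_w([0,T];H^1)$ (for all $T>0$) of the regularized problems \eqref{3.50}, \eqref{3.51}, with $u_{0h},d_h\in H^1$, $\beta_h\ne0$, and $\|d_h\|_{L_\infty}$ bounded uniformly in $h$; taking mollifiers with non-negative kernels of unit mass we may also arrange $\|u_{0h}\|_{L_2}\le\|u_0\|_{L_2}\le M$. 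The preliminary point requiring a little care is that the $d_h$ should, besides converging to $d$ in $L_2$ on every bounded interval, satisfy Condition~A with constants independent of $h$: this is achieved by choosing them so that $d_h\ge\alpha_0\zeta$ for a fixed cut-off $\zeta$ which equals $1$ for $|x|\ge R_0+1$ and $0$ for $|x|\le R_0$ (here $\alpha_0$ is the lower bound in Condition~A), a choice compatible with the construction of Theorem~\ref{T3.3}.

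First I would apply Theorem~\ref{T6.1} to each $u_h$, with $d_h$ and $\beta_h$ playing the roles of $d$ and $\beta$: this produces constants $\gamma_h>0$ and $c_h(M)>0$ with $\|u_h(t,\cdot)\|_{L_2}^2\le c_h(M)e^{-\gamma_h t}$ for all $t\ge0$. The essential observation is that these constants can be taken independent of $h$. Indeed, in the proof of Theorem~\ref{T6.1} both $\gamma$ and $c(M)$ are built, via \eqref{6.16}, from $\alpha_0$, the fixed auxiliary time, and the constant $c_0$ of Lemma~\ref{L6.1}; and the contradiction argument establishing Lemma~\ref{L6.1} already delivers a single $c_0$ valid for all damping functions satisfying Condition~A with the given $\alpha_0$ and radius $R_0+1$ and bounded in $L_\infty$ by a fixed constant, and for all coefficients in a fixed bounded set. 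Hence there are $\gamma>0$ and $c(M)>0$, independent of $h$, such that
\begin{equation*}
\|u_h(t,\cdot)\|_{L_2}^2\le c(M)e^{-\gamma t}\qquad\forall\,t\ge0,\ \forall\,h.
\end{equation*}

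It then remains to pass to the limit $h=h_k\to+0$. By the construction of Theorem~\ref{T3.3}, $u_{h_k}\to u$ strongly in $L_2\bigl((0,T)\times I\bigr)$ for every bounded interval $I$ and every $T>0$; after a diagonal extraction we get $u_{h_k}(t,x)\to u(t,x)$ for a.e.\ $(t,x)$, whence by Fatou's lemma $\int|u(t,x)|^2\,dx\le c(M)e^{-\gamma t}$ for a.e.\ $t$. Finally, since $u\in C_w([0,T];L_2)$ and the $L_2$-norm is sequentially weakly lower semicontinuous, for an arbitrary $t\ge0$ one chooses $t_j\to t$ in the full-measure set where this bound holds and uses $u(t_j,\cdot)\rightharpoonup u(t,\cdot)$ in $L_2$ to obtain $\|u(t,\cdot)\|_{L_2}^2\le\liminf_j\|u(t_j,\cdot)\|_{L_2}^2\le c(M)e^{-\gamma t}$, which is \eqref{1.5}.

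I expect the main obstacle to lie in this last step: weak-$*$ convergence of $u_{h_k}$ in $L_\infty(0,T;L_2)$ gives no direct control of the value at an individual time, so one must combine the local strong convergence (which yields the estimate for a.e.\ $t$ by Fatou) with the weak continuity of the limit (to upgrade from almost every $t$ to every $t$). The remaining, essentially bookkeeping, issue is the uniformity in $h$ of the constants supplied by Theorem~\ref{T6.1} and Lemma~\ref{L6.1}, which is available precisely because the proof of Lemma~\ref{L6.1} is already stated for sequences of admissible damping functions and coefficients.
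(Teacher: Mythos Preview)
Your proposal is correct and follows essentially the same approach as the paper: approximate by the regularized problems from Theorem~\ref{T3.3}, invoke the uniformity in Lemma~\ref{L6.1} and Theorem~\ref{T6.1} to get decay for $u_h$ with constants independent of $h$, and pass to the limit. The paper's limit step is slightly different in packaging---it uses the $*$-weak convergence in $L_\infty(0,T;L_2)$ to bound $\|u(t,\cdot)\|_{L_2}^2$ by $c(M)e^{-\gamma T}$ for $t\in[T,T+1]$ and absorbs the slack into the constant as $c(M)e^{\gamma}$---whereas you use local strong convergence, Fatou, and weak continuity to reach every $t$ directly; both routes are equivalent and rely on the same weak lower semicontinuity of the $L_2$-norm.
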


\begin{proof}
Approximate the functions $d$ and $u_0$ by smooth functions $d_h$ and $u_{0h}$ and introduce the functions $\beta_{h}$ as it is done in the proof of Theorem~\ref{T3.3} and consider the corresponding solutions $u_h(t,x)\in X(\Pi_T)$ $\forall T>0$. Note that if $h\in (0,1]$ Condition A is verified for functions $d_h$ uniformly with respect to $h$. Moreover, the functions $u_{0h}$ are bounded in $L_2$, the functions $d_h$ are bounded in $L_\infty$ and the values $\beta_{h}$ are bounded also uniformly with respect to $h$. Then inequality \eqref{6.1} is also uniform and the simple revision of the proof of Theorem \ref{T6.1} shows that functions $u_h$ verify estimate \eqref{1.5} uniformly with respect to $h$. 

In the proof of Theorem~\ref{3.3} the solution $u$ is constructed, in particular, as the $*$-weak limits of functions $u_h$ in $L_\infty(0,T;L_2)$ for any $T>0$. Then 
for any $t\in [T,T+1]$
$$
\|u(t,\cdot)\|^2_{L_2} \leq c(M) e^{-\gamma T} \leq c(M) e^{\gamma} e^{-\gamma t},
$$
which finishes the proof.
\end{proof}

Finally, we can present the proof of the main theorem.

\begin{proof}[Proof of Theorem~\ref{T1.1}]
Existence and uniqueness of the weak solution $u\in C_w([0,T];L_2^{3/4})$ succeeds from Theorems~\ref{T3.3} and~\ref{T4.1}, while estimate \eqref{1.5} under Condition~A --- from Theorem~ref{T6.2}.
\end{proof}

\appendix
\section{}\label{A}

Let $A>0$ and $a\in \mathbb R$, $|a|\leq A$. Introduce a function
\begin{equation}\label{A.1}
\Phi_a(x) \equiv \frac1{2\pi} \int e^{i(\xi^3 -a\xi^2 +x\xi)}\,d\xi.
\end{equation}

For this function we establish properties similar to the well-known ones for the classical Airy function.

\begin{lemma}\label{LA.1}
The function $\Phi_a$ for any $x\in \mathbb R$ exists, is infinitely smooth and satisfies an equation
\begin{equation}\label{A.2}
3 y'' -2a i y' - xy =0.
\end{equation} 
Moreover, for any integer non-negative $n$
\begin{equation}\label{A.3}
\bigl| \Phi_a^{(n)}(x)\bigr| \leq
\begin{cases} \displaystyle c(A,n) \bigl(1+|x|\bigr)^{(2n-1)/4},\quad &x\leq 0,\\ 
c(A,n) e^{-c_0 x^{3/2}},\quad &x\geq 0,
\end{cases}
\end{equation}
for certain positive constants $c_0$, $c(A,n)$. Finally, $\Phi_a = \EuScript F^{-1}\left[e^{i(\xi^3 -a\xi^2 )}\right]$, where the inverse Fourier transform is understood as an operation in $\EuScript S'$.
\end{lemma}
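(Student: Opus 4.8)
The plan is to treat \eqref{A.1} as an oscillatory integral and to recover the asserted properties from the standard analysis of Airy-type integrals, carrying the parameter $a$ along throughout. \emph{Existence, smoothness, and the differential equation.} Set $p(\xi)\equiv\xi^3-a\xi^2$, so that the integrand of \eqref{A.1} is $e^{i(p(\xi)+x\xi)}$ and $p'(\xi)+x=3\xi^2-2a\xi+x$ grows like $3\xi^2$ as $|\xi|\to\infty$. Fixing $R=R(A,x)$ so large that $p'(\xi)+x\neq0$ for $|\xi|\geq R$, repeated integration by parts on $\{|\xi|\geq R\}$ with the operator $L\equiv\bigl(i(p'(\xi)+x)\bigr)^{-1}\partial_\xi$ shows that the improper integral in \eqref{A.1} converges, and, since each differentiation in $x$ brings down only one factor $i\xi$ which is absorbed by one further integration by parts, that $\Phi_a\in C^\infty(\mathbb R)$ with $\Phi_a^{(n)}(x)=\frac1{2\pi}\int(i\xi)^ne^{i(p(\xi)+x\xi)}\,d\xi$ in the same oscillatory sense. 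For \eqref{A.2} I would insert the regulariser $e^{-\epsilon\xi^2}$, so that all integrals become absolutely convergent, and compute
\[
3\Phi_a''-2ai\,\Phi_a'-x\,\Phi_a=-\frac1{2\pi}\int\bigl(p'(\xi)+x\bigr)e^{i(p(\xi)+x\xi)}e^{-\epsilon\xi^2}\,d\xi=-\frac1{2\pi i}\int\frac{d}{d\xi}\Bigl(e^{i(p(\xi)+x\xi)}\Bigr)e^{-\epsilon\xi^2}\,d\xi;
\]
an integration by parts reduces the right-hand side to a constant times $\epsilon\int\xi\,e^{i(p(\xi)+x\xi)}e^{-\epsilon\xi^2}\,d\xi$, which tends to $0$ as $\epsilon\to0$ since the integral stays bounded. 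Hence \eqref{A.2} holds, and the last assertion of the lemma is addressed below.

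\emph{The bound for $x\geq0$.} Here I would shift the contour in \eqref{A.1} to the horizontal line $\{\xi=t+i\kappa:t\in\mathbb R\}$, $\kappa>0$ (the vertical segments at $\mathrm{Re}\,\xi=\pm R$ contribute nothing as $R\to\infty$, since there $\mathrm{Im}(p+x\xi)$ grows like $\kappa R^2$). On this line
\[
\mathrm{Im}\bigl(p(t+i\kappa)+x(t+i\kappa)\bigr)=\kappa\Bigl(3\bigl(t-\tfrac a3\bigr)^2+x-\tfrac{a^2}3-\kappa^2\Bigr),
\]
so taking $\kappa=\tfrac12\sqrt{x}$ (admissible once $x$ exceeds a fixed multiple of $A^2$) gives $\mathrm{Im}(p+x\xi)\geq 3\kappa(t-a/3)^2+c_0x^{3/2}$ with an absolute constant $c_0>0$. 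Estimating $\bigl|e^{i(p+x\xi)}\bigr|=e^{-\mathrm{Im}(p+x\xi)}$ leaves an absolutely convergent Gaussian integral, and the factor $|t+i\kappa|^n$ from the $n$-th derivative contributes only a polynomial power of $x$, swallowed by the exponential. This yields $|\Phi_a^{(n)}(x)|\leq c(A,n)e^{-c_0x^{3/2}}$ for all large $x$; on the remaining bounded range of $x$ the continuous function $\Phi_a^{(n)}$ is bounded, so the estimate survives after enlarging $c(A,n)$.

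\emph{The bound for $x\leq0$ and the Fourier identity.} Writing $x=-|x|$ and rescaling $\xi=|x|^{1/2}\eta$ turns \eqref{A.1} into $\Phi_a(-|x|)=\frac{|x|^{1/2}}{2\pi}\int e^{i|x|^{3/2}q_b(\eta)}\,d\eta$, with $q_b(\eta)\equiv\eta^3-b\eta^2-\eta$ and $b\equiv a|x|^{-1/2}$ (and an extra amplitude $(i|x|^{1/2}\eta)^n$ for the derivatives). For $|x|$ large, $b$ is small and $q_b$ has two nondegenerate critical points near $\pm1/\sqrt3$; splitting the $\eta$-integral into fixed neighbourhoods of these points and the complement, using integration by parts on the complement and the van der Corput second-derivative estimate near the critical points — uniformly for $|b|$ bounded — produces a gain of a factor $(|x|^{3/2})^{-1/2}=|x|^{-3/4}$. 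Hence $|\Phi_a(-|x|)|\leq c(A)|x|^{-1/4}$, and the amplitude $(i|x|^{1/2}\eta)^n$ contributes the extra factor $|x|^{n/2}$, giving the exponent $(2n-1)/4$; the bounded range of $x$ is handled trivially as before, which completes \eqref{A.3}. Finally, by \eqref{A.3} the function $\Phi_a$ is polynomially bounded, hence lies in $\EuScript S'$; for $\varphi\in\EuScript S$ I would once more insert $e^{-\epsilon\xi^2}$, apply Fubini to the resulting absolutely convergent double integral, recognise the inner $x$-integral as $(\EuScript F^{-1}\varphi)(\xi)$, and let $\epsilon\to0$, concluding $\langle\Phi_a,\varphi\rangle=\bigl\langle e^{i(\xi^3-a\xi^2)},\EuScript F^{-1}\varphi\bigr\rangle$, i.e. $\Phi_a=\EuScript F^{-1}\bigl[e^{i(\xi^3-a\xi^2)}\bigr]$.

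The main obstacle is the sharp form of \eqref{A.3}: one must carry out the contour shift (for $x\geq0$) and the rescaled van der Corput analysis (for $x\leq0$) so that the exponential rate $x^{3/2}$ and the stationary-phase gain $|x|^{-1/4}$ come out with the stated exponents and with all constants uniform over $|a|\leq A$. Everything else — convergence of the oscillatory integrals, differentiation under the integral sign, the regulariser arguments for \eqref{A.2} and for the Fourier identity — is routine.
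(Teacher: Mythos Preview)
Your proposal is correct and follows the same overall strategy as the paper: a horizontal contour shift (to height $\sim\sqrt{x}$) for $x\geq 0$ and a van der Corput/nonstationary-phase splitting for $x\leq 0$. The technical packaging differs in a few places worth noting. For existence and smoothness the paper rotates the half-lines to $e^{i\pi/6}\mathbb R_+$ and $e^{5i\pi/6}\mathbb R_+$, obtaining an explicit absolutely convergent representation \eqref{A.11}, and reads off the ODE \eqref{A.2} from that contour integral; you use integration by parts and a Gaussian regulariser instead. For $x\leq 0$ the paper works directly on $\mathbb R$, partitioning into $\Omega_1$ (where $|\varphi''|\gtrsim |x|^{1/2}$, van der Corput) and $\Omega_2\cup\Omega_3$ (where $|\varphi'|\gtrsim 1+\xi^2$, one integration by parts), whereas your rescaling $\xi=|x|^{1/2}\eta$ achieves the same bound via a cleaner large-parameter stationary-phase picture. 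Finally, the paper proves \eqref{A.3} only for $n\leq 1$ by these methods and then invokes the ODE \eqref{A.2} to bootstrap to all $n$; you treat all $n$ at once by carrying the amplitude $(i\xi)^n$ through. That works, but be aware that for $n\geq 2$ the naive justification of the contour shift (vanishing of the vertical segments) fails, so you must keep the regulariser $e^{-\epsilon\xi^2}$ during the shift and remove it only afterwards on the shifted line where the integrand is absolutely convergent --- the paper sidesteps this by reducing to $n\leq 1$.
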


\begin{proof}
Let $n=0$ or $n=1$. Consider functions of the complex variable $z$
\begin{equation}\label{A.4}
\varphi (z) \equiv z^3 - a z^2 +x z,\quad g_n(z) \equiv z^n e^{i\varphi (z)}.
\end{equation}
For $R>0$ define a contour $\gamma$ on $\mathbb C$, consisting of the segments $[0,R]$, $[Re^{i\pi/6},0]$ and the  connecting them arc of a circumference $\gamma_R(0,\pi/6)$ of the radius $R$. Then $\displaystyle \int_\gamma g_n(z)\, dz =0$. After the change of variables $z = \zeta^{1/3}$ (the choice of the branch of the root is evident) one obtains that
\begin{equation}\label{A.5}
\int_{\gamma_R(0,\pi/6)} g_n(z)\, dz = \frac13 \int_{\gamma_{R^3} (0,\pi/2)} \zeta^{(n-2)/3} e^{i(\zeta - a \zeta^{2/3} +x \zeta^{1/3})}\, d\zeta.
\end{equation}
Define on the arc $\gamma_{R^3}(0,\pi/2)$ the following parameter: let $\zeta = R^3 e^{it}$, $t\in [0,\pi/2]$. Then $\displaystyle \frac2{\pi}t \leq \sin t \leq t$ and, therefore,
$$
\left| e^{i(\zeta - a \zeta^{2/3} +x \zeta^{1/3})}\right | = e^{-R^3 \sin t + a R^2 \sin (2t/3) - x R \sin (t/3)} \leq e^{- R^3 t /\pi}
$$
for considerably large $R$ uniformly on any compact set with respect to $x$. Since $n < 2$ it follows from \eqref{A.5} that 
\begin{equation}\label{A.6}
\int_{\gamma_R(0,\pi/6)} g_n(z)\, dz \to 0
\end{equation}
when $R\to +\infty$ uniformly on any compact set with respect to $x$. Next, after the change of variables $z = r e^{i\pi/6}$ we find that
\begin{equation}\label{A.7}
\int_0^{Re^{i\pi/6}} g_n(z)\,dz = e^{i\pi(n+1)/6} \int_0^R r^n e^{- r^3 + a r^2 \sqrt{3}/2 -x r/2} e^{i(-a r^2/2 +x r \sqrt{3}/2)}\,dr.
\end{equation}
Therefore, when $R\to +\infty$ uniformly on any compact set with respect to $x$
\begin{equation}\label{A.8}
\int_0^{Re^{i\pi/6}} g_n(z)\,dz \to e^{i\pi(n+1)/6} \int_0^{+\infty} r^n e^{- r^3 + a r^2 \sqrt{3}/2 -x r/2} e^{i(-a r^2/2 +x r \sqrt{3}/2)}\,dr.
\end{equation}
As a result, it follows from \eqref{A.6} and \eqref{A.7} that when $R\to +\infty$ uniformly on any compact set with respect to $x$
\begin{multline}\label{A.9}
\int_0^R g_n(\xi)\, d\xi \to \int_0^{+\infty} g_n(\xi)\, d\xi = \int _0^{+\infty\cdot e^{i\pi/6}} g_n(z) \, dz \\ =
e^{i\pi(n+1)/6} \int_0^{+\infty} r^n e^{-r^3 + a r^2 \sqrt{3}/2 -x r/2} e^{i(-a r^2/2 +x r \sqrt{3}/2)}\,dr.
\end{multline}

Similarly, choosing a contour consisting of the segments $[-R,0]$, $[0,Re^{5i\pi/6}]$ and the arc $\gamma_R(5\pi/6,\pi)$ we find that when $R\to +\infty$ uniformly on any compact set with respect to $x$
\begin{multline}\label{A.10}
\int_{-R}^0 g_n(\xi)\, d\xi \to \int_{-\infty}^0 g_n(\xi)\, d\xi = \int _{+\infty\cdot e^{5i\pi/6}}^0 g_n(z) \, dz \\ =
-e^{5i\pi(n+1)/6} \int_0^{+\infty} r^n e^{-r^3 - a r^2 \sqrt{3}/2 -x r/2} e^{i(-a r^2/2 -x r \sqrt{3}/2)}\,dr.
\end{multline}

As a result, it follows from \eqref{A.9} and \eqref{A.10} that the function $\displaystyle \Phi_a(x) = \frac1{2\pi} \int_{\mathbb R} g_0(\xi)\, d\xi$ exists and
\begin{multline}\label{A.11}
\Phi_a(x) = \frac1{2\pi} \Bigl[ \int_0^{+\infty} e^{- r^3 + a r^2 \sqrt{3}/2 -x r/2} e^{i(-a r^2/2 +x r \sqrt{3}/2+ \pi/6)}\, dr \\ +
\int_0^{+\infty} e^{-r^3 - a r^2 \sqrt{3}/2 -x r/2} e^{i(-a r^2/2 -x r \sqrt{3}/2- \pi/6)}\, dr \Bigr].
\end{multline}
In particular, this function is infinitely smooth. Differentiating equalities \eqref{A.9} and \eqref{A.10} for $n=0$ with respect to $x$ we derive that
\begin{multline}\label{A.12}
\Phi_a'(x) = \frac{i}{2\pi} \Bigl[ \int_{+\infty\cdot e^{5i\pi/6}}^0 g_1(z)\, dz + \int_0^{+\infty\cdot e^{i\pi/6}} g_1(z)\, dz\Bigr] \\ =
\frac{i}{2\pi} \int g_1(z)\, dz = \frac{i}{2\pi} \int \xi e^{i(\xi^3 -a\xi^2 +x\xi)}\,d\xi.
\end{multline}
Moreover, since $\varphi'(z) = 3 z^2 -2a z +x$
\begin{multline*}
3 \Phi_a''(x) -2a i \Phi_a'(x) -x \Phi_a(x) \\ = 
\frac1{2\pi} \Bigl(  \int_{+\infty\cdot e^{5i\pi/6}}^0 + \int_0^{+\infty\cdot e^{i\pi/6}} \Bigr) (-3 z^2 +2a z -x) e^{i\varphi(z)}\, dz \\ =
\frac{i}{2\pi} e^{i\varphi(z)}\Big|_{+\infty\cdot e^{5i\pi/6}}^{+\infty\cdot e^{i\pi/6}} =0.
\end{multline*}
Note that $\displaystyle \frac1{2\pi} \int_{-R}^R  e^{i(\xi^3 -a\xi^2 +x\xi)}\,d\xi \to \Phi_a(x)$ when $x\to +\infty$ uniformly on any compact set with respect to $x$. Therefore, this convergence is valid in the space $\EuScript D'(\mathbb R)$. On the other hand, $e^{i(\xi^3 -a\xi^2)}\chi_{(-R,R)}(\xi) \to 
e^{i(\xi^3 -a\xi^2)}$ in the space $\EuScript S'(\mathbb R)$ when $R\to +\infty$. Since the inverse Fourier transform is continuous in this space, it follows that
$\displaystyle \frac1{2\pi} \int_{-R}^R  e^{i(\xi^3 -a\xi^2 +x\xi)}\,d\xi \to \EuScript F^{-1} \bigl[e^{i(\xi^3 -a\xi^2)}\bigr](x)$ in $\EuScript S'$. As a result, $\Phi_a = 
\EuScript F^{-1} \bigl[e^{i(\xi^3 -a\xi^2)}\bigr]$ in $\EuScript S'$.

It follows from \eqref{A.11} that for any integer non-negative $n$
\begin{equation}\label{A.13}
\bigl| \Phi_a^{(n)}(x) \Bigr| \leq c(A,n),\quad |x|\leq 1.
\end{equation}

In order to obtain estimates \eqref{1.3} for $|x|>1$ first consider the case $n\leq 1$. Then according to \eqref{A.1} and \eqref{A.12} we have to estimate $\displaystyle \int_{\mathbb R} g_n(\xi)\, d\xi$.

Let $x>1$. For $R>0$ construct on a complex plane a rectangle $\widetilde\gamma$ with the base $[-R,R]$ and the height $h = (x/3)^{1/2}$. Then
$\displaystyle \int_{\widetilde\gamma} g_n(z)\, dz =0$. Consider first the right side of the rectangle. Introduce a parameter $z = R +it$, $t\in [0,\sqrt{x/3}]$. Then
\begin{multline*}
\int_R^{R+i\sqrt{x/3}} g_n(z) \,dz \\ =
i\int_0^{\sqrt{x/3}} (R+it)^n e^{-3R^2t +t^3 +2a Rt -xt} e^{i(R^3 -3Rt^2 -aR^2 +at^2 +xR)}\,dt.
\end{multline*}
For the fixed $x$ and considerably large $R$
\begin{multline*}
\Bigl| \int_R^{R+i\sqrt{x/3}} g_n(z) \,dz \Bigr| \leq \left(R^2 +\frac{x}{3}\right)^{n/2} \int_0^{\sqrt{x/3}} e^{-R^2t}\, dt \\ \leq
\frac1{R^2}  \left(R^2 +\frac{x}{3}\right)^{n/2} \int_0^{+\infty} e^{-\theta}\, d\theta \to 0,\quad R\to +\infty.
\end{multline*}
The left side of the rectangle is considered in a similar way. Then passing to the limit when $R\to +\infty$ we obtain that
\begin{multline*}
\int g_n(\xi)\, d\xi = \int_{-\infty +i\sqrt{x/3}}^{+\infty +i\sqrt{x/3}} g_n(z) \,dz \\ =
\int \bigl(\xi + i\sqrt{x/3}\bigr)^n e^{i[(\xi + i\sqrt{x/3})^3 -a(\xi + i\sqrt{x/3})^2 +x(\xi + i\sqrt{x/3})]}\,d\xi \\ =
e^{-2(x/3)^{3/2}} \int \bigl(\xi + i\sqrt{x/3}\bigr)^n e^{-\sqrt{3x}\xi^2 +2a\sqrt{x/3}\xi} e^{i\Re\varphi(\xi+ i\sqrt{x/3})}\,d\xi,
\end{multline*}
whence it follows that
\begin{multline*}
\Bigl| \int g_n(\xi)\, d\xi \Bigr| \leq e^{-2(x/3)^{3/2}} \int \bigl(\xi^2 +\frac{x}{3}\bigr)^{n/2} e^{-\sqrt{3x}\xi^2 +2a\sqrt{x/3}\xi}\, d\xi \\ = e^{-2(x/3)^{3/2} +a^2 x^{1/2} 3^{-3/2}} \int  \bigl(\xi^2 +\frac{x}{3}\bigr)^{n/2}  e^{-\sqrt{3x}(\xi- a/3)^2}\, d\xi \\ \leq
c(A,n) e^{-(x/3)^{3/2}}
\end{multline*}
and estimate \eqref{A.3} in the case $x>1$, $n\leq 1$ is established.

Now let $x<-1$, $n\leq 1$. 
Divide the real line into three parts:
\begin{gather*}
\Omega_1 = \{\xi: |x|/2 \leq 3 \xi^2 - 2a\xi  \leq 3|x|/2\},\\ \Omega_2 = \{\xi: 3 \xi^2 - 2a\xi < |x|/2\}, \\ \Omega_3 = \{\xi: 3\xi^2 -2a\xi > 3|x|/2\}. 
\end{gather*}
Let $\xi \in \Omega_1$. Consider first the inequality $3\xi^2 -2a\xi\geq |x|/2$. Let
\begin{equation}\label{A.14}
\xi_1 = \frac{1}{3}\bigl(a - \sqrt{a^2 +3 |x|/2}\bigr),\quad \xi_2 = \frac{1}{3} \bigl(a + \sqrt{a^2 +3 |x|/2}\bigr). 
\end{equation}
It means that either $\xi\leq \xi_1$ or $\xi\geq \xi_2$. If $\xi\geq \xi_2$ then $\xi \geq |x/6|^{1/2}$ in the case $a\geq 0$, while in the case $a<0$
$$
\xi \geq \frac{1}{3} \bigl( \sqrt{A^2 +3|x|/2} - A\bigr) \geq \frac{1}{3} \bigl( \sqrt{A^2 +3/2} -A\bigr) |x|^{1/2}.
$$ 
If $\xi\leq \xi_1$ then $|\xi| \geq |x/6|^{1/2}$ in the case $a\leq 0$, while in the case $a>0$
$$
|\xi| \geq \frac{1}{3} \bigl( \sqrt{A^2 +3|x|/2} - A\bigr) \geq \frac{1}{3} \bigl( \sqrt{A^2 +3/2} -A\bigr) |x|^{1/2}.
$$ 
As a result, if $\xi \in \Omega_1$ then $|\xi| \geq c^{-1}(A) |x|^{1/2}$. Next, consider the inequality $3 \xi^2 - 2a\xi \leq 3|x|/2$. Let
\begin{equation}\label{A.15}
\xi_3 = \frac{1}{3}\bigl(a - \sqrt{a^2 +9 |x|/2}\bigr),\quad \xi_4 = \frac{1}{3} \bigl(a + \sqrt{a^2 +9 |x|/2}\bigr). 
\end{equation}
It means that $\xi_3 \leq \xi \leq \xi_4$. Then if $a\geq 0$
$$
|\xi|\leq \xi_4 \leq \frac{1}{3} \bigl( \sqrt{A^2 +9/2} + A\bigr)|x|^{1/2},
$$ 
while if $a<0$
$$
|\xi| \leq |\xi_3|  \leq \frac{1}{3} \bigl( \sqrt{A^2 +9/2} + A\bigr)|x|^{1/2}.
$$
As a result, if $\xi\in \Omega_1$ then
\begin{equation}\label{A.16}
c^{-1}(A) |x|^{1/2} \leq |\xi| \leq c(A) |x|^{1/2}.
\end{equation}
Note that $\varphi''(\xi) = 6\xi  - 2a$. Then for $\xi\in \Omega_1$
\begin{equation}\label{A.17}
|\varphi''(\xi)| = \frac1{|\xi|} |3 \xi^2 -2a \xi + 3 \xi^2| \geq  3 |\xi| \geq c^{-1}(A)|x|^{1/2}
\end{equation}
and application of the Van-der-Corput lemma yields that
\begin{multline}\label{A.18}
\Bigl| \int_{\Omega_1} g_n(\xi)\,d\xi\Bigr| \\ \leq
c\Bigl(\inf\limits_{\xi\in\Omega_1} |\varphi''(\xi)|\Bigr)^{-1/2} \Bigl(\sup\limits_{\xi\in\Omega_1} |\xi|^n +\int_{\Omega_1} \bigl|(\xi^n)'\bigr|\,d\xi \Bigr) \leq
c(A)|x|^{(2n-1)/4}.
\end{multline}

Let $\xi\in \Omega_2$. Then $\xi_1 \leq \xi \leq \xi_2$. It means that
$$
|\xi| \leq \frac{1}{3} \bigl( \sqrt{A^2 +3/2} + A\bigr)|x|^{1/2} = c(A)|x|^{1/2}.
$$
Therefore, 
\begin{equation}\label{A.19}
|\varphi'(\xi)| = |3\xi^2 -2a\xi +x| \geq |x|/2 \geq c^{-1}(A)(1+\xi^2).
\end{equation}
Then
\begin{equation}\label{A.20}
\int_{\Omega_2} g_n(\xi)\,d\xi = \frac{\xi^n}{i\varphi'(\xi)} e^{i\varphi(\xi)}\Big|_{\xi_1}^{\xi_2} 
-\frac1i \int_{\Omega_2} \Bigl(\frac{\xi^n}{\varphi'(\xi)}\Bigr)' e^{i\varphi(\xi)}\,d\xi.
\end{equation}
Here
$$
\Bigl(\frac{\xi^n}{\varphi'(\xi)}\Bigr)' = \frac{(\xi^n)'\varphi'(\xi) - \xi^n \varphi''(\xi)}{(\varphi'(\xi))^2}
$$
and since $|\xi^n\varphi''(\xi)| \leq c(A)(\xi^2 +1)$
\begin{multline}\label{A.21}
\Bigl|\int_{\Omega_2} g_n(\xi)\,d\xi\Bigr| \leq c(A)|x|^{n/2-1} + c(A)\int_{\Omega_2} \frac{d\xi}{1+\xi^2}  \leq c(A)|x|^{n/2-1} \\ 
+ \frac{c_1(A)}{|x|^{1/4}} \int \frac{d\xi}{(1+\xi^2)^{3/4}} \leq c_2(A)|x|^{-1/4} \leq c_2(A) |x|^{(2n-1)/4}.
\end{multline}

Let $\xi\in \Omega_3$. Then either $\xi\leq \xi_3$ or $\xi\geq \xi_4$ and 
\begin{equation}\label{A.22}
|\varphi'(\xi)| \geq |3\xi^2 -2a\xi| -|x| \geq |x|/2 \geq 1/2.
\end{equation}
Since $|x| \leq 2(3 \xi^2 - 2a\xi)/3$ it follows that
$$
|\varphi'(\xi)| \geq 3\xi^2 - 2a\xi -|x| \geq \frac13 (3\xi^2 - 2a\xi).
$$
If $a\xi \leq 0$, then with the use of \eqref{A.22} we obtain that
\begin{equation}\label{A.23}
|\varphi'(\xi)|  \geq (1+ \xi^2)/4.
\end{equation}
Let $a\xi >0$. If $a >0$ then $\xi\geq \xi_4$, if $a<0$ then $\xi \leq \xi_3$. In both cases we obtain that
$$
|\xi| \geq \frac1{3} \bigl( |a| + \sqrt{a^2 +9|x|/2}\bigr).
$$
Since $|x| \geq a^2/A^2$
$$
|\xi| \geq \frac{|a|}{3} \bigl( 1 +\sqrt{1+ \frac9{2A^2}}\bigr) = \frac2{1-\widetilde c} \frac{|a|}{3},
$$
where 
$$
\widetilde c =  \frac{\sqrt{A^2 +9/2} -A}{\sqrt{A^2+9/2} +A} = c^{-1}(A), \quad \widetilde c \in (0,1),
$$ 
that is $(1- \widetilde c) |\xi| \geq 2|a|/3$, and, therefore,
\begin{equation}\label{A.24}
|\varphi'(\xi)| \geq \widetilde c \xi^2 + |\xi| \bigl( (1-\widetilde c)|\xi| -2|a|/3\bigr) \geq \widetilde c  \xi^2 \geq c^{-1}(A)(1+\xi^2).
\end{equation}
Inequalities \eqref{A.22}, \eqref{A.23}, \eqref{A.24} mean that in the case $\xi\in \Omega_3$ the function $|\varphi'(\xi)|$ verifies the same properties as in the case $\xi\in \Omega_2$ in \eqref{A.19}.
Then we can apply the same argument as in \eqref{A.20} and obtain the same estimate as in \eqref{A.21}.

As a result, estimate \eqref{A.3} is established for $n\leq 1$. For other values of $n$ it follows from equality \eqref{A.2}.
\end{proof}

Now consider the fundamental solution of the differential operator 
$\EuScript L(\partial_t, \partial_x) = \partial_t +\partial_x^3 - i a \partial_x^2 + b\partial_x$ (see \eqref{2.5}).
Then it follows from \eqref{2.6} and \eqref{A.1} that
\begin{equation}\label{A.25}
G(t,x) = \frac{\theta(t)}{t^{1/3}} \Phi_{a t^{1/3}}\left(\frac{x- b t}{t^{1/3}}\right).
\end{equation}
The properties of the function $\Phi_a$ imply that the function $G(t,x)$ is infinitely smooth for $t>0$.

\begin{lemma}\label{LA.2}
For any $T>0$, $0<t\leq T$ and integer non-negative $n$ 
\begin{equation}\label{A.26}
\bigl| \partial_x^n G(t,x)\bigr| \leq
\begin{cases} \displaystyle c(T,n,a,b) t^{-q(n)}\bigl(1+|x|\bigr)^{(2n-1)/4},\quad &x\leq 0,\\ 
c(T,n,a,b) t^{-(n+1)/3}e^{-c_0 x^{3/2}T^{-1/2}},\quad &x\geq 0,
\end{cases}
\end{equation}
for certain positive constants $c(T,n,a,b)$, $c_0$, where $q(0) = 1/3$, $q(n) = (2n+1)/4$ for $n\geq 1$. Moreover, if $b\leq 0$, $x\geq 0$ or $b\geq 0$, $x\geq b t$, then
\begin{equation}\label{A.27}
\bigl| \partial_x^n G(t,x)\bigr| \leq c(T,n,a,b) t^{-(n+1)/3} e^{-c_0 x^{3/2}t^{-1/2}}.
\end{equation}
\end{lemma}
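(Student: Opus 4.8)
The plan is to deduce \eqref{A.26} and \eqref{A.27} from the representation \eqref{A.25} together with the pointwise bounds \eqref{A.3} for $\Phi_a$ and its derivatives. Since $x$ enters $G$ only through the argument $y:=(x-bt)t^{-1/3}$ of $\Phi_{at^{1/3}}$, the chain rule gives, for $t>0$,
\[
\partial_x^n G(t,x)=t^{-(n+1)/3}\,\Phi_{at^{1/3}}^{(n)}\bigl((x-bt)t^{-1/3}\bigr),
\]
one extra factor $t^{-1/3}$ per differentiation. For $0<t\le T$ we have $|at^{1/3}|\le|a|T^{1/3}=:A$, so the constants $c(A,n)$ and $c_0$ in \eqref{A.3} may be taken uniform in $t$ (depending on $T$, $a$, $n$). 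Throughout I will freely absorb any factor $T^{\sigma}$ with $\sigma\ge 0$ into the final constant, which is harmless since $c(T,n,a,b)$ is allowed to depend on $T$, $a$, $b$.

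I would first treat the region $y\ge 0$, i.e.\ $x\ge bt$. There the second line of \eqref{A.3} gives $|\Phi_{at^{1/3}}^{(n)}(y)|\le c\,e^{-c_0(x-bt)^{3/2}t^{-1/2}}$. If $b\le 0$ and $x\ge 0$ then $x-bt\ge x\ge 0$, so $(x-bt)^{3/2}\ge x^{3/2}$ and $e^{-c_0(x-bt)^{3/2}t^{-1/2}}\le e^{-c_0 x^{3/2}t^{-1/2}}\le e^{-c_0 x^{3/2}T^{-1/2}}$; this yields \eqref{A.27} and the $x\ge 0$ part of \eqref{A.26}. If $b\ge 0$ and $x\ge bt$, the elementary inequality $x^{3/2}\le 2^{3/2}\bigl((x-bt)^{3/2}+(bt)^{3/2}\bigr)$ together with $0\le bt\le bT$ gives $(x-bt)^{3/2}t^{-1/2}\ge 2^{-3/2}x^{3/2}t^{-1/2}-b^{3/2}T$, so after replacing $c_0$ by $c_0 2^{-3/2}$ and enlarging the constant by the factor $e^{c_0 b^{3/2}T}$ one again obtains \eqref{A.27}, hence \eqref{A.26} for $x\ge 0$ via $t^{-1/2}\ge T^{-1/2}$. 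It remains, for the $x\ge 0$ case of \eqref{A.26}, to cover the strip $b\ge 0$, $0\le x<bt$; but there $x<bT$ and $|y|<bt\,t^{-1/3}\le bT^{2/3}$ are bounded, so $|\Phi^{(n)}_{at^{1/3}}(y)|\le c$ while $e^{-c_0 x^{3/2}T^{-1/2}}$ is bounded below, and the estimate is immediate.

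Next I would treat $x\le 0$. If $y\ge 0$ then $bt\le x\le 0$ forces $b\le 0$ and $|x|\le|b|T$, so $|\Phi^{(n)}_{at^{1/3}}(y)|\le c$ and $|\partial_x^n G|\le c\,t^{-(n+1)/3}\le c\,T^{q(n)-(n+1)/3}t^{-q(n)}$; since $q(n)\ge(n+1)/3$ and $(1+|x|)^{(2n-1)/4}$ is here bounded above and below, this is $\le c\,t^{-q(n)}(1+|x|)^{(2n-1)/4}$. If $y<0$, the first line of \eqref{A.3} gives $|\Phi^{(n)}_{at^{1/3}}(y)|\le c\bigl(1+|y|\bigr)^{(2n-1)/4}$ with $1+|y|=(t^{1/3}+|x|+bt)t^{-1/3}$ and $|x|+bt>0$, whence
\[
\bigl|\partial_x^n G(t,x)\bigr|\le c\,t^{-(n+1)/3}\bigl(1+(|x|+bt)t^{-1/3}\bigr)^{(2n-1)/4}=c\,t^{-q(n)}\bigl(t^{1/3}+|x|+bt\bigr)^{(2n-1)/4}\quad(n\ge 1),
\]
the $t$-exponent being exactly $-(n+1)/3-(2n-1)/12=-(2n+1)/4=-q(n)$; then $t^{1/3}+|x|+bt\le(1+T^{1/3}+|b|T)(1+|x|)$ finishes the case $n\ge1$. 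For $n=0$ one argues separately: when $|x|\ge 2+2|b|T$ one has $|x|+bt\ge|x|/2\ge(1+|x|)/4$, so $1+|y|\ge(1+|x|)(4T^{1/3})^{-1}$ and $|G|\le c\,t^{-1/3}(1+|y|)^{-1/4}\le c'\,t^{-1/3}(1+|x|)^{-1/4}$; when $|x|<2+2|b|T$, $|x|$ is bounded, $(1+|x|)^{-1/4}$ is bounded below, and the crude bound $|G|\le c\,t^{-1/3}$ already gives the claim. Since \eqref{A.3} is valid for every $n$, the above covers all $n\ge 0$.

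The only real difficulty is the bookkeeping: carrying the translation $x\mapsto x-bt$ and the self-similar rescaling by $t^{1/3}$ through the two regimes of \eqref{A.3}, in particular near the characteristic line $x=bt$, where one has neither exponential decay nor a large negative argument; but there $|x|$ stays bounded for $0<t\le T$, so the estimate is obtained simply by absorbing $T$-dependent constants. No ingredient beyond \eqref{A.3} and elementary inequalities is needed.
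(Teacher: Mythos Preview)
Your proof is correct and follows essentially the same approach as the paper: both deduce the bounds from the representation \eqref{A.25} together with \eqref{A.3}, via a case analysis tracking the sign of the argument $(x-bt)t^{-1/3}$ of $\Phi_{at^{1/3}}$. The paper organizes the cases by the sign of $b$ first and then the position of $x$, while you organize by the sign of $y=(x-bt)t^{-1/3}$ first; the estimates and elementary inequalities used are the same.
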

 
\begin{proof}
First consider the case $b\geq 0$. Let $x\geq b t$, then since
$$
\Bigl(\frac{x-b t}{t^{1/3}}\Bigr)^{3/2} \geq \frac1{\sqrt2} \Bigl(\frac{x}{t^{1/3}}\Bigr)^{3/2} - b^{3/2}T
$$
inequality \eqref{A.3} in the case $x\geq 0$ yields that
$$
|\partial_x^n G(t,x)| \leq \frac{c(|a|T^{1/3},n)}{t^{(n+1)/3}} e^{-c_0\bigl(\frac{x-b t}{t^{1/3}}\bigr)^{3/2}} \leq \frac{c(T,n,a,b)}{t^{(n+1)/3}} e^{-c_0 x^{3/2}(2t)^{-1/2}}.
$$
Let $0 \leq x \leq  b t$, then since 
$$
1\leq 1 +\frac{|x- b t|}{t^{1/3}} \leq 1 + b T^{2/3}
$$
inequality \eqref{A.3} in the case $x\leq 0$ yields that
\begin{multline*}
|\partial_x^n G(t,x)| \leq \frac{c(|a|T^{1/3},n,b T^{2/3})}{t^{(n+1)/3}} \leq \frac{c(T,n,a,b)}{t^{(n+1)/3}} e^{-c_0 (bT)^{3/2} T^{-1/2}} \\ \leq \frac{c(T,n,a,b)}{t^{(n+1)/3}} e^{-c_0 x^{3/2} T^{-1/2}}.
\end{multline*}
Finally, let $x\leq 0$, then since
$$
1+  \frac{|x|}{T^{1/3}} \leq 1 +\frac{|x- b t|}{t^{1/3}} \leq 1 + b T^{2/3} + \frac{|x|}{t^{1/3}}
$$
inequality \eqref{A.3} in the case $x\leq 0$ yields that
\begin{equation}\label{A.28}
|G(t,x)| \leq c(T,a,b) t^{-1/3} (1+|x|)^{-1/4}
\end{equation}
and if $n\geq 1$
\begin{equation}\label{A.29}
|\partial_x^n G(t,x)| \leq \frac{c(T,n,a,b)}{t^{(n+1)/3}} (1 +\frac{|x|}{t^{1/3}})^{(2n-1)/4} \leq c_1(T,n,a,b) t^{-q(n)} (1+|x|)^{(2n-1)/4}.
\end{equation}

Now consider the case $b<0$. If $x\geq 0$ then inequality \eqref{A.3} in the case $x\geq 0$ yields that
$$
|\partial_x^n G(t,x)| \leq \frac{c(|a|T^{1/3},n)}{t^{(n+1)/3}} e^{-c_0\bigl(\frac{x- bt}{t^{1/3}}\bigr)^{3/2}} \leq \frac{c(T,n,a)}{t^{(n+1)/3}} e^{-c_0 x^{3/2} t^{-1/2}}.
$$
If $b t \leq x \leq 0$, then $|x| \leq |b|T$ and inequality \eqref{A.3} in the case $x\geq 0$ yields that
\begin{multline*}
|\partial_x^n G(t,x)| \leq \frac{c(|a|T^{1/3},n)}{t^{(n+1)/3}} \leq \frac{c(T,n,a,b)}{t^{(n+1)/3}} (1+|x|)^{(2n-1)/4}\\ \leq \frac{c_1(T,n,a,b)}{t^{q(n)}} (1+|x|)^{(2n-1)/4}.
\end{multline*}
Finally, if $x\leq b t$ then since
$$
\frac{1}{1+|b|T^{2/3}} \Bigl(1+\frac{|x|}{T^{1/3}} \Bigr) \leq 1+ \frac{|x-b t|}{t^{1/3}}  \leq 1 + \frac{|x|}{t^{1/3}}
$$
inequality \eqref{A.3} in the case $x\leq 0$ similarly to \eqref{A.28}, \eqref{A.29} yields estimates \eqref{A.26} also in this case. 
\end{proof}

\begin{lemma}\label{LA.3}
For any $\alpha\geq 0$, $\epsilon>0$, $T>0$, $t\in (0,T]$, $n=0$ or $n=1$, and $y\in \mathbb R$
\begin{equation}\label{A.30}
\|\partial_x^n G(t,\cdot -y)\rho^{1/2}_{\alpha,\epsilon}\|_{L_2(\mathbb R)} \leq c(\alpha,\epsilon,T,a,b) t^{-(5n+4)/12} \rho^{1/2}_{\alpha,\epsilon}(y) (1+y_+)^{(2n+1)/4}.
\end{equation}
\end{lemma}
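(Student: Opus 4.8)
The plan is to reduce \eqref{A.30} to the pointwise bounds \eqref{A.26} of Lemma~\ref{LA.2}. First I would change variables $x=z+y$ and square the norm, obtaining
\[
\|\partial_x^n G(t,\cdot-y)\rho^{1/2}_{\alpha,\epsilon}\|_{L_2(\mathbb R)}^2 = \int |\partial_x^n G(t,z)|^2 \rho_{\alpha,\epsilon}(z+y)\,dz,
\]
and split this integral into the half-line $z\ge 0$, where I use the super-exponential bound $|\partial_x^n G(t,z)|\le c(T,n,a,b)\,t^{-(n+1)/3}e^{-c_0 z^{3/2}T^{-1/2}}$, and the half-line $z\le 0$, where I use $|\partial_x^n G(t,z)|\le c(T,n,a,b)\,t^{-q(n)}(1+|z|)^{(2n-1)/4}$. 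It then remains to establish two weighted one-variable integral estimates and to compare powers of $t$, working with $n\in\{0,1\}$ throughout so that $n$ is absorbed into the constants.

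On $z\ge 0$ I would show $\int_0^{+\infty} e^{-2c_0 z^{3/2}T^{-1/2}}\rho_{\alpha,\epsilon}(z+y)\,dz\le c(\alpha,\epsilon,T)\rho_{\alpha,\epsilon}(y)$. For $y\ge 0$ and $\alpha>0$ this follows from $\rho_{\alpha,\epsilon}(z+y)=(1+z+y)^{2\alpha}\le (1+z)^{2\alpha}\rho_{\alpha,\epsilon}(y)$ and integrability of $e^{-2c_0 z^{3/2}T^{-1/2}}(1+z)^{2\alpha}$; for $\alpha=0$ use $\rho_{0,\epsilon}\le 2\le 2\rho_{0,\epsilon}(y)$ when $y\ge 0$. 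For $-1\le y\le 0$ the weight $\rho_{\alpha,\epsilon}(y)$ is bounded below by a positive constant and $\rho_{\alpha,\epsilon}(z+y)\le\rho_{\alpha,\epsilon}(z)$, so the same integrability applies. The one delicate case is $y\le -1$, where $\rho_{\alpha,\epsilon}(y)=e^{2\epsilon y}$ is exponentially small: for $0\le z\le|y|-1$ one has $z+y\le -1$ and $\rho_{\alpha,\epsilon}(z+y)=e^{2\epsilon z}\rho_{\alpha,\epsilon}(y)$, while for $z\ge|y|-1$ the decay of $G$ wins, since $e^{2\epsilon|y|}e^{-c_0(|y|-1)^{3/2}T^{-1/2}}$ stays bounded because $|y|^{3/2}$ outgrows $|y|$. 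This gives a contribution $\le c\,t^{-2(n+1)/3}\rho_{\alpha,\epsilon}(y)\le c_1\,t^{-(5n+4)/6}\rho_{\alpha,\epsilon}(y)$, since $t\le T$ and $2(n+1)/3\le (5n+4)/6$.

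On $z\le 0$ I split according to the sign of $z+y$. Where $z+y\le -1$ I use $\rho_{\alpha,\epsilon}(z+y)=e^{2\epsilon(z+y)}$ and where $-1\le z+y\le 0$ the weight is bounded by a constant; these make $\int(1+|z|)^{(2n-1)/2}\rho_{\alpha,\epsilon}(z+y)\,dz$ converge at $-\infty$ even for $n=0$. If $y\le 0$ one gets directly $\le c\,t^{-2q(n)}\rho_{\alpha,\epsilon}(y)$. If $y>0$, the essential range is $-y\le z\le 0$, on which $\rho_{\alpha,\epsilon}(z+y)\le\rho_{\alpha,\epsilon}(y)$ and $\int_0^{y}(1+s)^{(2n-1)/2}\,ds\le c(n)(1+y)^{(2n+1)/2}$ — this is precisely what produces the factor $(1+y_+)^{(2n+1)/2}$; the remaining ranges $-1-y\le z\le -y$ (a unit interval) and $z\le -1-y$ contribute lower order, bounded using $\rho_{\alpha,\epsilon}(y)\ge 1$. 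Since $2q(n)=(5n+4)/6$ for $n\in\{0,1\}$, this region contributes $\le c\,t^{-(5n+4)/6}\rho_{\alpha,\epsilon}(y)(1+y_+)^{(2n+1)/2}$. Adding the two regions and taking square roots yields \eqref{A.30}.

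The hard part is the bookkeeping across the three regimes of $\rho_{\alpha,\epsilon}$ — the exponential tail on $(-\infty,-1]$, the transition layer $(-1,0)$, and the polynomial-or-logarithmic part on $[0,+\infty)$ — and in particular verifying that in the single bad regime ($y$ very negative and $z$ of order $|y|$) the super-Gaussian decay $e^{-c_0 z^{3/2}T^{-1/2}}$ of $G$ overwhelms the exponentially large (in $|y|$) ratio $\rho_{\alpha,\epsilon}(z+y)/\rho_{\alpha,\epsilon}(y)$; everything else is routine one-dimensional integration.
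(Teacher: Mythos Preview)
Your argument is correct and is essentially the same as the paper's. Both proofs feed the pointwise bounds \eqref{A.26} into the weighted $L_2$ integral and then split according to where the behaviour of $G$ and of $\rho_{\alpha,\epsilon}$ changes; the paper does this in the original variable $x$, splitting at $x=0$ and $x=y$ (and separating $y\ge 0$ from $y<0$), while you first shift to $z=x-y$, split at $z=0$, and then case-split on $y$ --- this is just a different bookkeeping of the same six subregions, and the individual estimates (including the observation that $2q(n)=(5n+4)/6$ and $2(n+1)/3\le(5n+4)/6$ for $n\in\{0,1\}$) match.
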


\begin{proof}
Note that $(5n+4)/12 = q(n)$ for $n=0$ and $n=1$, $q(n) \geq (n+1)/3$, where $q(n)$ is a number from \eqref{A.26}.

First let $y\geq 0$, then ($\rho\equiv \rho_{\alpha,\epsilon)}$)
$$
\int \rho(x) |\partial_x^n G(t,x-y)|^2\, dx = \int_{-\infty}^0\dots dx + \int _0^y\dots dx +\int_y^{+\infty}\dots dx.
$$ 
Inequalities \eqref{A.26} imply that
\begin{multline*}
t^{2q(n)} \int_{-\infty}^0 \dots dx \leq c \int_{-\infty}^0 (1+y-x)^{(2n-1)/2} e^{2\epsilon x}\,dx \\  \leq
c(1+y)^{1/2} \int_{-\infty}^0 (1+|x|)^{1/2} e^{2\epsilon x}\, dx   = c(\epsilon) (1+y)^{1/2},
\end{multline*}
\begin{multline*}
t^{2q(n)} \int_0^y \dots dx \leq c \int_0^y (1+y-x)^{(2n-1)/2} \rho(x) \,dx  \\ \leq 
c \rho(y) \int_0^y (1+y-x)^{(2n-1)/2} \,dx \leq c_1 \rho(y) (1+y)^{(2n+1)/2}, 
\end{multline*}
\begin{multline*}
t^{2(n+1)/3} \int_y^{+\infty} \dots dx \leq c \int_y^{+\infty} e^{-2c_0(x-y)^{3/2}T^{-1/2}} (1+x)^{2\alpha}\, dx \\ =
c \int_0^{+\infty} e^{-2c_0 \theta^{3/2}T^{-1/2}} (1+\theta +y)^{2\alpha}\, d\theta  \\ \leq 
c (1+y)^{2\alpha}  \int_0^{+\infty} e^{-2c_0 \theta^{3/2}T^{-1/2}} (1+\theta)^{2\alpha}\, d\theta \leq c(\alpha) \rho(y).
\end{multline*}
If $y<0$ then
$$
\int \rho(x) |G_x(t,x-y)|^2\, dx = \int_{-\infty}^y\dots dx + \int _y^0 \dots dx +\int_0^{+\infty} \dots dx.
$$
Here
\begin{multline*}
t^{2q(n)} \int_{-\infty}^y \dots dx \leq c \int_{-\infty}^y (1+y-x)^{(2n-1)/2} e^{2\epsilon x}\,dx  \\ \leq
c e^{2\epsilon y} \int_0^{+\infty} (1+\theta)^{1/2} e^{-2\epsilon\theta}\, d\theta \leq c(\epsilon) \rho(y),
\end{multline*}
\begin{multline*}
t^{2(n+1)/3} \int_y^0 \dots dx \leq c \int_y^0 e^{-2c_0(x-y)^{3/2}T^{-1/2}} e^{2\epsilon x}\, dx \\ \leq
c e^{2\epsilon y} \int_0^{+\infty} e^{-2c_0 \theta^{3/2} T^{-1/2}} e^{2\epsilon \theta}\, d\theta \leq c(\epsilon) \rho(y),
\end{multline*}
\begin{multline*}
t^{2(n+1)/3} \int_0^{+\infty} \dots dx \leq c \int _0^{+\infty}e^{-2c_0(x-y)^{3/2}T^{-1/2}} (1+x)^{2\alpha}\, dx \\ \leq
c(T) e^{2\epsilon y} \int_0^{+\infty} e^{-2c_0 \theta^{3/2} T^{-1/2}} (1+\theta)^{2\alpha} e^{2\epsilon\theta} \,d\theta \leq c(\alpha,\epsilon)\rho(y).
\end{multline*}
\end{proof}

\section{}\label{B}

\begin{lemma}\label{LB.1}
Let $\varphi\in W_1^1(I)$ for certain interval $I\subset \mathbb R$, then for a.e. $x\in I$ there exists $|\varphi(x)|'$,
\begin{equation}\label{B.1}
|\varphi|' =
\begin{cases} \displaystyle \frac{\varphi' \bar \varphi + \varphi \bar \varphi'}{2|\varphi|},\quad &\varphi(x)\ne 0,\\ 0,\quad &\varphi(x)=0,
\end{cases}
\end{equation}
in particular,
\begin{equation}\label{B.2}
\bigl||\varphi|'\bigr| \leq |\varphi'|.
\end{equation}
\end{lemma}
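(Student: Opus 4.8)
The plan is to establish \eqref{B.1} by a regularization argument, reducing to the situation where the absolute value has been smoothed out, and then to read off \eqref{B.2} directly from the formula. Since the assertion is local in $x$ and concerns almost every point, after restricting to an arbitrary bounded subinterval $J\subset I$ I may assume $\varphi\in W_1^1(J)$; then $\varphi$ and $\bar\varphi$ agree a.e. with absolutely continuous functions on $J$, in particular $\varphi$ is bounded. Consequently $w:=|\varphi|^2=\varphi\bar\varphi\in W_1^1(J)$, with $w'=\varphi'\bar\varphi+\varphi\bar\varphi'=2\Re(\varphi'\bar\varphi)$, so that $|w'|\le 2|\varphi|\,|\varphi'|$, and of course $w\ge 0$.

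The key ingredient is the standard fact that a function in $W_1^1$ has vanishing derivative almost everywhere on each of its level sets; applied to $w$ this gives $w'=0$ a.e. on $\{\varphi=0\}=\{w=0\}$. Now for $\epsilon>0$ put $g_\epsilon:=(w+\epsilon)^{1/2}$. Obtained by composing $w$ with the $C^1$ function $s\mapsto(s+\epsilon)^{1/2}$ on $[0,\infty)$, it lies in $W_1^1(J)$ and
\[
g_\epsilon'=\frac{w'}{2(w+\epsilon)^{1/2}}=\frac{\varphi'\bar\varphi+\varphi\bar\varphi'}{2(|\varphi|^2+\epsilon)^{1/2}},\qquad |g_\epsilon'|\le\frac{2|\varphi|\,|\varphi'|}{2(|\varphi|^2+\epsilon)^{1/2}}\le|\varphi'|.
\]
Letting $\epsilon\to 0^+$ one has $g_\epsilon\to|\varphi|$ uniformly on $J$, while $g_\epsilon'(x)$ converges for a.e.\ $x$: at points with $\varphi(x)\ne 0$ the limit is $(\varphi'\bar\varphi+\varphi\bar\varphi')/(2|\varphi|)$, and at a.e.\ point with $\varphi(x)=0$ one has $g_\epsilon'(x)=0$ for \emph{every} $\epsilon$, by the level-set fact above. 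Denote by $h$ the resulting a.e.\ limit; it is precisely the right-hand side of \eqref{B.1}.

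Finally I would pass to the limit in the weak formulation: for $\psi\in C_c^\infty(J)$, dominated convergence (with dominating functions a constant multiple of $|\varphi'|\,|\psi|$, respectively $|\varphi'|\,|\psi'|$) yields $\int|\varphi|\,\psi'=\lim_\epsilon\int g_\epsilon\,\psi'=-\lim_\epsilon\int g_\epsilon'\,\psi=-\int h\,\psi$, so $|\varphi|$ has weak derivative $h\in L_1(J)$. Hence $|\varphi|\in W_1^1(J)$, so it is (a.e.\ equal to) an absolutely continuous function, its pointwise derivative exists a.e.\ and equals $h$, which is \eqref{B.1}. Inequality \eqref{B.2} is then immediate: where $\varphi\ne 0$, $\bigl||\varphi|'\bigr|=|2\Re(\varphi'\bar\varphi)|/(2|\varphi|)\le 2|\varphi|\,|\varphi'|/(2|\varphi|)=|\varphi'|$, and where $\varphi=0$ it reads $0\le|\varphi'|$. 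The only genuinely non-elementary point is the invocation of the level-set property of $W_1^1$ functions, which pins down the value $0$ of the derivative on the zero set of $\varphi$; everything else is routine.
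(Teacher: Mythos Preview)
Your argument is correct, but it takes a different route from the paper's. The paper proceeds directly: since $\bigl||\varphi(a)|-|\varphi(b)|\bigr|\le|\varphi(a)-\varphi(b)|$, the function $|\varphi|$ inherits absolute continuity from $\varphi$, hence is differentiable a.e.; at points where $\varphi\ne 0$ the chain rule gives the formula, while at a non-isolated zero $x_0$ one picks a sequence $x_n\to x_0$ with $\varphi(x_n)=0$ and reads off $|\varphi|'(x_0)=0$ from the difference quotient, the isolated zeros being at most countable. Your regularization $g_\epsilon=(|\varphi|^2+\epsilon)^{1/2}$ is precisely the paper's $g_\delta$ from \eqref{1.7}, so your approach in effect anticipates the method the paper uses only in the \emph{next} lemma (Lemma~\ref{LB.2}); it trades the elementary sequence-at-zeros trick for the standard level-set property of $W_1^1$ functions. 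The paper's version is shorter and entirely self-contained, whereas yours is more systematic and dovetails smoothly with how the $g_\delta$ regularization is exploited later. One small cosmetic point: the dominating function you name for $\int g_\epsilon\psi'$ should be $(|\varphi|+1)|\psi'|$ rather than $|\varphi'|\,|\psi'|$, though uniform convergence of $g_\epsilon$ makes that limit immediate anyway.
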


\begin{proof}
This formula is obvious if $\varphi(x)\ne 0$. The function $|\varphi(x)|$ is absolutely continuous on every bounded segment in $I$ and, thus, differentiable a.e. on $I$. Let $x_0\in I$ be such point, where $|\varphi(x)|$ is differentiable, $\varphi(x_0)=0$ and there exists a sequence $x_n\to x_0$ such that $\varphi(x_n)=0$. Then
$$
|\varphi(x_0)|' = \lim\limits_{x_n\to x_0} \frac{|\varphi(x_n)| - |\varphi(x_0)|}{x_n-x_0} =0.
$$
It remains to note that the set of isolated zeros of the function $\varphi$ has a zero measure. 
\end{proof}

\begin{lemma}\label{LB.2}
Let $\varphi\in W^2_1(I)$ for certain interval $I\subset \mathbb R$, then for a.e. $x\in I$ there exists $\bigl(|\varphi(x)|' \varphi(x)\bigr)'$ and 
\begin{equation}\label{B.3}
\bigl|\bigl(|\varphi|' \varphi\bigr)'\bigr| \leq 3 |\varphi'|^2 + |\varphi \varphi''|.
\end{equation}
\end{lemma}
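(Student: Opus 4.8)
The statement is local, so it suffices to work on an arbitrary compact subinterval of $I$; the plan is to reduce everything to the auxiliary function $\psi := |\varphi|^2 = \varphi\bar\varphi$, which is plainly in $W^2_1$ there, with $\psi' = \varphi'\bar\varphi + \varphi\bar\varphi' = 2\Re(\bar\varphi\varphi')$ and $\psi'' = 2\Re(\bar\varphi\varphi'') + 2|\varphi'|^2$, so that $|\psi'| \le 2|\varphi|\,|\varphi'|$ and $|\psi''| \le 2|\varphi|\,|\varphi''| + 2|\varphi'|^2$. Away from the zero set $Z = \{x: \varphi(x) = 0\}$ the function $|\varphi|'\varphi$ equals $\psi'\varphi/(2\sqrt\psi)$ (by \eqref{B.1}) and is an honest $W^1_1$ function, so it could be differentiated directly; the whole point of the lemma is the behaviour on $Z$, where \eqref{B.1} degenerates. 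To treat $Z$ uniformly I would regularize with the functions $g_\delta$ from \eqref{1.7}.

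For $\delta > 0$ set $\varphi_\delta := g_\delta(|\varphi|^2) = (\psi+\delta)^{1/2}$. Since on the fixed compact subinterval $\psi + \delta$ lies between $\delta$ and a finite bound and $s\mapsto s^{1/2}$ is smooth there, $\varphi_\delta \in W^2_1$ with $\varphi_\delta' = \psi'/(2(\psi+\delta)^{1/2})$, hence $h_\delta := \varphi_\delta'\varphi = \psi'\varphi/(2(\psi+\delta)^{1/2})$ is in $W^1_1$ (a $W^1_1$ factor times the $C^1$ factor $\varphi$). Differentiating,
\[
h_\delta' = \frac{\psi''\varphi}{2(\psi+\delta)^{1/2}} + \frac{\psi'\varphi'}{2(\psi+\delta)^{1/2}} - \frac{(\psi')^2\varphi}{4(\psi+\delta)^{3/2}} \qquad \text{a.e.}
\]
Using $|\varphi|^2 = \psi \le \psi + \delta$, so that $|\varphi|/(\psi+\delta)^{1/2} \le 1$ and $|\varphi|^3/(\psi+\delta)^{3/2} \le 1$, together with the bounds on $\psi'$ and $\psi''$ above, the three summands are dominated by $|\varphi\varphi''| + |\varphi'|^2$, by $|\varphi'|^2$, and by $|\varphi'|^2$ respectively; hence $|h_\delta'| \le 3|\varphi'|^2 + |\varphi\varphi''|$ a.e., uniformly in $\delta$.

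Finally I would let $\delta \to +0$. One has $h_\delta \to |\varphi|'\varphi$ pointwise a.e. (on $\{\varphi\ne 0\}$ because $\psi'\varphi/(2(\psi+\delta)^{1/2}) \to \psi'\varphi/(2|\varphi|) = |\varphi|'\varphi$ by \eqref{B.1}, and on $Z$ because both sides vanish), while $|h_\delta| \le |\psi'|/2 \le |\varphi|\,|\varphi'|$, which is in $L_1$ on that subinterval, so the convergence also holds in $L_1$ by dominated convergence. Since moreover $|h_\delta'| \le 3|\varphi'|^2 + |\varphi\varphi''|$ with a $\delta$-independent $L_1$ majorant, the family $\{h_\delta\}$ is bounded in $W^1_1$ and, along a subsequence for which $h_\delta'$ converges weakly in $L_1$ (the common domination supplies the uniform integrability needed for the Dunford--Pettis criterion), the weak limit is forced to be the distributional derivative of $|\varphi|'\varphi$ and it inherits the same pointwise bound. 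Thus $|\varphi|'\varphi$ is locally absolutely continuous, its classical derivative exists a.e. and agrees with the weak one, and \eqref{B.3} follows. The only genuinely delicate step is this limit passage, i.e. making sure the a.e. bound on $h_\delta'$ is not lost in the limit; the uniform $L_1$ domination (not merely $L_1$-boundedness) is exactly what rescues it. If one prefers to avoid the weak-compactness argument, an alternative is to establish the estimate by the direct computation on the open set $\{\varphi\ne 0\}$ and to invoke the classical fact that the derivative of a $W^1_1$ function vanishes a.e. on its zero set, applied to $\varphi$ and to $\varphi'$, to check that both $|\varphi|'\varphi$ and the right-hand side of \eqref{B.3} vanish a.e. on $Z$.
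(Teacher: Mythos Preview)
Your proposal is correct and follows essentially the same regularization strategy as the paper: introduce $h_\delta = \bigl(g_\delta(|\varphi|^2)\bigr)'\varphi$, obtain the uniform bound $|h_\delta'| \le 3|\varphi'|^2 + |\varphi\varphi''|$, and pass to the limit $\delta\to 0$.

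The only noteworthy difference is in the limit passage. You invoke Dunford--Pettis to get weak $L_1$ compactness of $h_\delta'$ and then argue (implicitly via Mazur) that the pointwise bound survives. The paper instead obtains \emph{strong} $L_1$ convergence of $h_\delta'$ by dominated convergence, after observing that $h_\delta'$ converges pointwise a.e.\ to an explicit limit $\Phi$: on $\{\varphi\ne 0\}$ this is immediate from the formula, while on the zero set $Z$ the paper uses that at every non-isolated zero of $\varphi$ (hence at a.e.\ point of $Z$) one also has $\varphi'=0$, so each term of $h_\delta'$ vanishes identically there. This elementary observation lets the paper avoid any weak-compactness machinery. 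Your ``alternative'' at the end is in fact very close to this: the classical fact that a $W^1_1$ function has vanishing derivative a.e.\ on its zero set, applied to $\varphi$, gives exactly $\varphi'=0$ a.e.\ on $Z$, which is the key point the paper exploits.
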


\begin{proof}
At the point $x\in I$, where  exist derivatives $\varphi'(x)$, $\varphi''(x)$ and $\varphi(x)\ne 0$, it is easy to see that there exists 
\begin{multline}\label{B.4}
\bigl|\bigl(|\varphi|' \varphi\bigr)'\bigr| = \frac{\varphi'}{2 |\varphi|} (\varphi' \bar \varphi + \varphi \bar \varphi') \\ +
\frac{\varphi}{2 |\varphi|} (\varphi'' \bar\varphi +2|\varphi'|^2 +\varphi\bar\varphi'') - \frac{\varphi}{4 |\varphi|^{3}} (\varphi' \bar\varphi + \varphi \bar\varphi')^2 
 \equiv \Phi(x).
\end{multline}
Set $\Phi(x) =0$ when $\varphi(x)=0$.

For $\delta >0$ consider the functions $g_\delta$ from \eqref{1.7}, then for a.e. $x\in I$
\begin{equation}\label{B.5}
\bigl(g_\delta(|\varphi|^2)\bigr)'  = \frac{(\varphi'\bar\varphi +\varphi \bar\varphi')}{2(|\varphi|^2+\delta)^{1/2}}
\end{equation}
and
$$
\bigl(g_\delta(|\varphi|^2)\bigr)' \varphi \to \frac{\varphi(\varphi'\bar\varphi +\varphi\bar\varphi')}{2|\varphi|}
$$
in $L_1(I)$ when $\delta \to +0$. Moreover, for a.e. $x\in I$ there exists $\bigl(\bigl(g_\delta(|\varphi(x)|^2)\bigr)' \varphi(x)\bigr)'$ and
\begin{multline}\label{B.6}
\bigl|\bigl(\bigl(g_\delta(|\varphi|^2)\bigr)' \varphi\bigr)'\bigr| = \frac{\varphi'}{2(|\varphi|^2+\delta)^{1/2}} (\varphi' \bar \varphi + \varphi \bar \varphi') \\ +
\frac{\varphi}{2(|\varphi|^2+\delta)^{1/2}} (\varphi'' \bar\varphi +2|\varphi'|^2 +\varphi\bar\varphi'') - \frac{\varphi}{4 (|\varphi|^2+\delta)^{3/2}} (\varphi' \bar\varphi + \varphi \bar\varphi')^2.
\end{multline}
Let $\EuScript E$ be s set of points $x\in I$ where either the function $\varphi$ is not twice differentiable or $x$ is an isolated zero of the function $\varphi$. Then this set has a zero measure. Moreover, similarly to the proof of Lemma \ref{LB.1} if $x\not\in \EuScript E$ and $\varphi(x)=0$ then $\varphi'(x)=0$. Then for $x\in I\setminus \EuScript E$
$$
\bigl(\bigl(g_\delta(|\varphi|^2)\bigr)' \varphi\bigr)' \to \Phi(x)
$$
and 
$$
\bigl(\bigl(g_\delta(|\varphi|^2)\bigr)' \varphi\bigr)' \to \Phi
$$
in $L_1(I)$ when $\delta \to +0$. Therefore, there exists $\bigl(|\varphi|' \varphi\bigr)' =\Phi$ and estimate \eqref{B.3} holds.
\end{proof}

\begin{remark}\label{RB.1}
It follows from \eqref{B.2}, \eqref{B.3}, \eqref{B.5} and \eqref{B.6} that uniformly with respect to $\delta\geq 0$
\begin{equation}\label{B.7}
\bigl|\bigl(g_\delta(|\varphi|^2)\bigr)'\bigr| \leq  |\varphi'|,\quad \bigl|\bigl(\bigl(g_\delta(|\varphi|^2)\bigr)' \varphi\bigr)'\bigr| \leq 3|\varphi'|^2 + |\varphi \varphi''|.
\end{equation}
\end{remark}

\begin{lemma}\label{LB.3}
Consider a function of two complex variables $u$ and $v$
\begin{equation}\label{B.8}
P(u,v) \equiv 3|u| v +\frac{u^2 \bar v}{|u|}.
\end{equation}
If $|u|, |v|, |\widetilde u|, |\widetilde v| \leq M$, then
\begin{equation}\label{B.9}
|P(u, v) - P(\widetilde u, \widetilde v)| \leq 6M |u -\widetilde u| + 4M |v -\widetilde v|.
\end{equation}
\end{lemma}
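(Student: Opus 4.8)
The plan is to pass through the intermediate value $P(u,\widetilde v)$ and split
$$
P(u,v) - P(\widetilde u,\widetilde v) = \bigl(P(u,v) - P(u,\widetilde v)\bigr) + \bigl(P(u,\widetilde v) - P(\widetilde u,\widetilde v)\bigr),
$$
so that the first bracket isolates the dependence on $v$ and the second the dependence on $u$; each will be estimated separately.

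For the first bracket, since $P$ is linear in the pair $(v,\bar v)$, one has $P(u,v) - P(u,\widetilde v) = 3|u|(v-\widetilde v) + \dfrac{u^2}{|u|}(\bar v - \bar{\widetilde v})$, and because $\bigl|u^2/|u|\bigr| = |u|$ this has modulus at most $4|u|\,|v-\widetilde v| \le 4M|v-\widetilde v|$. (When $u=0$ the quantity $P(u,\cdot)$ is read as its continuous extension $0$ and the bound is trivial.)

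For the second bracket, $P(u,\widetilde v) - P(\widetilde u,\widetilde v) = 3\bigl(|u| - |\widetilde u|\bigr)\widetilde v + \Bigl(\dfrac{u^2}{|u|} - \dfrac{\widetilde u^2}{|\widetilde u|}\Bigr)\bar{\widetilde v}$. The first summand is handled by the elementary inequality $\bigl||u| - |\widetilde u|\bigr| \le |u-\widetilde u|$, contributing at most $3M|u-\widetilde u|$. The real point — the step I expect to carry essentially all the work — is the estimate $\bigl|u^2/|u| - \widetilde u^2/|\widetilde u|\bigr| \le 3|u - \widetilde u|$. I would prove it (for $u,\widetilde u\ne 0$; if one of them vanishes the bound is immediate with an even smaller constant) from the algebraic identity
$$
\frac{u^2}{|u|} - \frac{\widetilde u^2}{|\widetilde u|} = \frac{u(u-\widetilde u)}{|u|} + \frac{u\widetilde u\,(|\widetilde u| - |u|)}{|u|\,|\widetilde u|} + \frac{\widetilde u(u-\widetilde u)}{|\widetilde u|},
$$
noting that the three summands have moduli $|u-\widetilde u|$, $\bigl||\widetilde u|-|u|\bigr|\le|u-\widetilde u|$, and $|u-\widetilde u|$ respectively. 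Hence the second bracket is bounded by $3M|u-\widetilde u| + 3M|u-\widetilde u| = 6M|u-\widetilde u|$.

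Adding the two estimates gives $|P(u,v) - P(\widetilde u,\widetilde v)| \le 6M|u-\widetilde u| + 4M|v-\widetilde v|$, which is \eqref{B.9}. The only bookkeeping subtlety is the behaviour at the points where $u=0$ or $\widetilde u=0$, where $P$ and the auxiliary expression $u^2/|u|$ are defined by continuous extension; one checks directly that all the above inequalities persist there, with the same or better constants, so no case distinction beyond a remark is needed.
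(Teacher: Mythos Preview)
Your proof is correct and follows essentially the same approach as the paper: both reduce the estimate to the Lipschitz bound $\bigl|u^2/|u| - \widetilde u^2/|\widetilde u|\bigr| \le 3|u-\widetilde u|$, obtained via an algebraic telescoping. Your three-term identity is in fact a slight improvement over the paper's version, since it is symmetric in $u,\widetilde u$ and thus avoids the paper's without-loss-of-generality assumption $|\widetilde u|\le |u|$.
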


\begin{proof}
In fact,
$$
P(u,v) - P(\widetilde u, \widetilde v) = 3\bigl(|u| - |\widetilde u|\bigr) v + 3|\widetilde u| (v - \widetilde v) + \left(\frac{u^2}{|u|} - \frac{\widetilde u^2}{|\widetilde u|} \right)\bar v + \frac{\widetilde u^2}{|\widetilde u|} \overline{(v- \widetilde v)}.
$$
Let, for example, $|\widetilde u| \leq |u|$, then
$$
\frac{u^2}{|u|} - \frac{\widetilde u^2}{|\widetilde u|} = \frac{u^2 |\widetilde u| - |u| \widetilde u^2}{|u \widetilde u|} = 
\frac{ (u^2 -\widetilde u^2)|\widetilde u| + \widetilde u^2 \bigl(|\widetilde u| - |u|\bigr)}{|u \widetilde u|}
$$
and so
$$
\left| \frac{u^2}{|u|} - \frac{\widetilde u^2}{|\widetilde u|} \right| \leq 
\left(2 +\frac{|\widetilde u|}{|u|}\right) |u - \widetilde u| \leq 3|u - \widetilde u|.
$$
\end{proof}

\end{document}